\documentclass[a4paper,11pt]{amsart}

\usepackage{amsfonts,amssymb,mathtools,amsmath, array, amsthm}
\usepackage{hyperref}
\usepackage{tikz}
\usepackage{textcomp, color}
\usepackage[capitalise]{cleveref}
\usepackage{orcidlink}

\theoremstyle{plain}

\newtheorem*{thmA}{Theorem A}
\newtheorem*{thmB}{Theorem B}
\newtheorem*{thmC}{Theorem C}
\newtheorem{thm}{Theorem}[section]
\newtheorem{lem}[thm]{Lemma}
\newtheorem{pro}[thm]{Proposition}

\newtheorem{rmk}[thm]{Remark}

\theoremstyle{definition}

\newtheorem{dfn}[thm]{Definition}

\newcommand{\F}{\mathbb{F}}
\newcommand{\Z}{\mathbb{Z}}

\newcommand{\N}{\mathbb{N}}

\newcommand{\C}{\mathbb{C}}
\renewcommand{\P}{\mathbb{P}}
\newcommand{\U}{\mathbb{U}}
\newcommand{\T}{\mathbb{T}}

\DeclareMathOperator{\Inn}{Inn}
\DeclareMathOperator{\DInn}{DInn}

\DeclareMathOperator{\Aut}{Aut}
\DeclareMathOperator{\DAut}{DAut}
\DeclareMathOperator{\DDAut}{DDAut}

\DeclareMathOperator{\cl}{cl}

\DeclareMathOperator{\PSL}{PSL}
\DeclareMathOperator{\GL}{GL}
\DeclareMathOperator{\Au}{A_\U}
\DeclareMathOperator{\Out}{Out}
\DeclareMathOperator{\Stab}{Stab}
\DeclareMathOperator{\id}{id}

\begin{document}

\title[The number of Beauville surfaces with Beauville $p$-group]{The Number of Isomorphism Classes of Beauville Surfaces with Beauville $p$-Group}

\author[\c{S}.\ G\"ul]{\c{S}\"ukran G\"ul \orcidlink{0000-0003-4792-7084} }
\address{Department of Mathematics\\ Middle East Technical University
\\
06800 Ankara, Turkey.}
\email{gsukran@metu.edu.tr}

\keywords{Beauville surfaces, Beauville groups; Metacyclic $p$-groups; $p$-groups of class $2$.  \vspace{3pt} \\
{\itshape Mathematics Subject Classification:} 20D15, 14J29 }

\begin{abstract}
A Beauville surface is a rigid complex surface of general type, isogenous
to a higher product by the free action of a finite group $G$, called a Beauville group.
In \cite{GT}, Gonz\'alez-Diez and Torres-Teigell find the number of isomorphism classes of Beauville surfaces for which the group $G$ is $\PSL(2,p)$ with particular types of `Beauville structures'. On the other hand, in \cite{GJT}, Gonz\'alez-Diez, Jones and Torres-Teigell give an explicit formula for this number when the group  $G$ is abelian.
To the best of the author's knowledge, in the literature, the exact number of isomorphism classes of Beauville surfaces is given only for $\PSL(2,p)$ and for abelian groups.
In this paper, we extend the result for Beauville surfaces with abelian $p$-group to Beauville surfaces for which the Beauville group is either a non-abelian metacyclic $p$-group or a $p$-group of nilpotency class $2$.
\end{abstract}	
	
\maketitle

\section{Introduction}

A \emph{Beauville surface\/} of unmixed type is a  compact complex surface isomorphic to 
$(C_1\times C_2)/G$, where $C_1$ and $C_2$ are algebraic curves of genus at least $2$ and $G$ is a finite group acting freely on $C_1\times C_2$ and faithfully on the factors $C_i$ such that $C_i/G\cong \P_1(\C)$ and the covering map $C_i\rightarrow C_i/G$ is ramified over three points for $i=1,2$.
Then the group $G$ is said to be a \emph{Beauville group\/}.

The condition for a finite group $G$ to be a Beauville group can be formulated in purely group-theoretical terms (\cite{BCG, BCG2}).

\begin{dfn}
For a couple of elements $x,y \in G$, we define
\[
\Sigma(x,y)
=
\bigcup_{g\in G} \,
\Big( \langle x \rangle^g \cup \langle y \rangle^g \cup \langle xy \rangle^g \Big),
\]
that is, the union of all subgroups of $G$ which are conjugate to $\langle x \rangle$, to 
$\langle y \rangle$ or to $\langle xy \rangle$. Then $G$ is a Beauville group if and only if the following conditions hold:
\begin{enumerate}
	\item $G$ is a $2$-generator group.
	\item There exists a pair of generating sets $\{x_1,y_1\}$ and $\{x_2,y_2\}$ of $G$ such that 
	\[\Sigma(x_1,y_1) \cap \Sigma(x_2,y_2)=1.\]
\end{enumerate}

If we let $T_1=(x_1, y_1, y_1^{-1}x_1^{-1})$ and $T_2=(x_2, y_2, y_2^{-1}x_2^{-1})$, then the pair of triples $(T_1, T_2)$ is said to be  a \textit{Beauville structure} for $G$.

We write $\U(G)$ for the set of all Beauville structures for a Beauville group $G$.

Futhermore, for $i=1,2$, we say that $T_i$ is of \textit{type} $\tau_i=(m_{i,1},\ m_{i,2},\ m_{i,3})$ if the orders of elements in $T_i$ are $(m_{i,1},\ m_{i,2}, \ m_{i,3})$, respectively.

\end{dfn} 

Beauville surfaces have received a great deal of attention since they were introduced by Catanese in \cite{cat} following a construction of Beauville in \cite{bea}. In \cite{BCG,BCG2, BCG3}, Bauer, Catanese and Grunewald described the basic properties of Beauville surfaces and addressed the most natural questions about them.
Later, they have been studied by Fuertes,  Gonz\'alez-Diez and Jaikin in \cite{FGJ}, and more recently by Fairbairn, Garion, Guralnick, Jones, Larsen, Lubotzky, Magaard, Malle, Parker and Penegini in \cite{FMP,GLL,GP,GM,jon}.

On the other hand, there is a particular interest in determining which finite groups can be used in construction of Beauville surfaces; see, for example, the survey papers \cite{ bos, fai1, fai2,jon2}.
Catanese \cite{cat} showed that a finite abelian group is a Beauville group if and only if it is isomorphic to
$C_n\times C_n$, with $n>1$ and $\gcd(n,6)=1$. A remarkable result, proved independently by Guralnick and Malle \cite{GM} and by Fairbairn, Magaard and Parker \cite{FMP}, is that every non-abelian finite simple group other than $A_5$ is a Beauville group.

After abelian groups the most natural class of finite groups to consider are nilpotent groups. The study of nilpotent Beauville groups is reduced to that of Beauville $p$-groups. In \cite{BBF}, it was shown that there are non-abelian Beauville $p$-groups of order $p^n$ for every  $p\geq 5$ and every $n\geq 3$.
In \cite{SV}, Stix and Vdovina constructed infinite series of Beauville $p$-groups, for every prime $p$, by considering quotients of ordinary triangle groups.  Another infinite family of Beauville $p$-groups, for $p$ an odd prime, was given by G\"ul and Uria-Albizuri \cite{GUA}. Later on, Di Domenico, G\"ul and Thillaisundaram \cite{DDGT} provided more infinite families of Beauville $p$-groups, for every prime $p$. It is also worth mentioning that Fern\'andez-Alcober, G\"ul and Vannacci \cite{FGV} showed that the numbers of Beauville and non-Beauville $p$-groups of fixed order behave asymptotically the same as the number of $2$-generator groups of the same order.

In the specific case of $p$-groups, Fern\'andez-Alcober and G\"ul \cite{FG} extended Catanese's criterion from abelian groups to a much wider family of $p$-groups with a good behaviour with respect to powers. Very recently, in \cite{FGGS}, Fern\'andez-Alcober, Gavioli, G\"ul and Scoppola have studied the existence of Beauville structures for $p$-groups of maximal class.

Note that different Beauville structures for the same group can give rise to non-isomorphic Beauville surfaces.
In \cite{BCG}, Bauer, Catanese and Grunewald gave a lower bound for the asymptotic behavior of the number of isomorphism classes of Beauville surfaces with abelian group $C_n\times C_n$ when $n\rightarrow \infty$. Later, Garion and Penegini \cite{GP} obtained that this number is between $N_n/72$ and $N_n/6$, where $n=p_1^{k_1}\cdots p_t^{k_t}$, for distinct primes $p_i$, and
\[
N_n=\prod_{i=1}^{t}p_i^{4k_i-4}(p_i-1)(p_i-2)(p_i-3)(p_i-4).
\]
Also in \cite{GP}, asymptotic behavior of the number of non-isomorphic Beauville surfaces with the alternating group $A_n$, the symmetric group $S_n$ or the projective special linear group $\PSL(2,p)$ with particular types $(\tau_1, \tau_2)$ was studied.

On the other hand, in \cite{GT}, Gonz\'alez-Diez and Torres-Teigell showed that if $(\tau_1, \tau_2)=\big((2,3,n), (p,p,p) \big)$, for any prime $p\geq 13$ and natural number $n$ diving $(p-1)/2$ or $(p+1)/2$, then the number of isomorphism classes of Beauville surfaces with group $G=\PSL(2,p)$ with types $(\tau_1, \tau_2)$ is exactly equal to $\phi(n)$, where $\phi(n)$ stands for the Euler's Phi function. Later in \cite{GJT}, Gonz\'alez-Diez, Jones and Torres-Teigell gave an explicit formula for the number of isomorphism classes of Beauville surfaces with abelian group $C_n\times C_n$. To the best of the author's knowledge, this and the number given for $\PSL(2,p)$ in \cite{GT} are the only exact number of isomorphism classes of Beauville surfaces in the literature.

Note that the geometric problem of computing the number of isomorphism classes of Beauville surfaces with group $G$ can be reduced into the algebraic one of counting orbits of an action of the group $\Au(G)$, which acts on the set of all
Beauville structures for $G$ by a combination of permutation of coordinates, conjugation, applying diagonally automorphisms of $G$ and applying an operation that exchanges the triples in a Beauville structure. Corollary 2 in \cite{GT} gives that the Beauville surfaces corresponding to two distinct Beauville structures for a group $G$ are isomorphic if and only if those Beauville structures are in the same $\Au(G)$-orbit. In this paper, we will follow this pure group theoretical method. At this point, we would like to emphasize that this is a highly
technical task. Because  first of all, one has to determine all Beauville structures for Beauville group $G$, which can be too complicated for arbitrary Beauville group. Also in order to define the group $\Au(G)$, one needs to determine the automorphism group of $G$, which is also a difficult task. Thus, in this paper we restrict
ourselves to some classes of $p$-groups.

The goal of this paper is to extend the result given in \cite[Corollary 4]{GJT} for the number of isomorphism classes of Beauville surfaces with group $C_{p^e}\times C_{p^e}$ to that for which the Beauville group is either a non-abelian metacyclic $p$-group or a $p$-group of nilpotency class $2$. 

The main results are as follows.

\begin{thmA}
\label{thmA}
	Let $p\geq 5$ be a prime and let $G$ be a Beauville $p$-group given by the following presentation:
	\[
	G=\langle a,b \mid a^{p^e}=b^{p^e}=[b,a]^{p^j}=[b,a,b]=[b,a,a]=1 \rangle
	\]
	where $0<j\leq e$.
	Then the number of isomorphism classes of Beauville surfaces with $G$ is
	\[
	\frac{1}{72}\bigg(p^{4e-4}
	(p-1)(p-2)(p-3)(p-4)
	+ \
	p^{2e-2}  
	\Big(4(p-1)(p-2)+6(p-3)(p-5)\Big)  
	\bigg),
	\]
	if  $p\equiv -1 \pmod{3}$, and
	\[
	\frac{1}{72}
	\bigg( p^{4e-4}
	(p-1)(p-2)(p-3)(p-4)
	+ \
	p^{2e-2}
	\Big(4(p-1)(p-4)+6(p-3)(p-5)\Big)
	+
	24
	\bigg),
	\]
	if $p\equiv 1 \pmod{3}$.
\end{thmA}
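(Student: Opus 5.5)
The plan is to reduce the count to Burnside's lemma for a group of order $72$. By Corollary 2 of \cite{GT}, isomorphism classes of Beauville surfaces with group $G$ correspond bijectively to orbits of $\Au(G)$ on $\U(G)$. Here $\Au(G)$ is generated by three kinds of maps: the diagonal action of $\Aut(G)$, the braid-type permutations of the entries of each triple (independently on $T_1$ and $T_2$, giving $S_3\times S_3$), and the swap $(T_1,T_2)\mapsto(T_2,T_1)$. Since $\Inn(G)\le\Aut(G)$, conjugation is absorbed into the $\Aut(G)$-action.

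\textbf{Step 1: reduce to a group of order $72$.} The first structural input is that $G$ is relatively free in the variety of groups of class $\le 2$ with $x^{p^e}=1$ and $[x,y]^{p^j}=1$. Because $p\ge 5$ is odd and the class is $2$, we have $(xy)^{p^e}=x^{p^e}y^{p^e}[y,x]^{\binom{p^e}{2}}=1$, so the variety is well defined. Hence any generating pair $\{x,y\}$ of $G$ is the image of $\{a,b\}$ under a unique automorphism. It follows that $\Aut(G)$ acts regularly on ordered generating pairs, and therefore freely on $\U(G)$. So $\U(G)/\Aut(G)$ has size $N=|\U(G)|/|\Aut(G)|$, and the required number is the number of orbits of $H=(S_3\times S_3)\rtimes C_2$, of order $72$, on this quotient.

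\textbf{Step 2: compute $N$ (the identity term).} Using the criterion of Fern\'andez-Alcober and G\"ul \cite{FG} for groups with good power behaviour, whether $\Sigma(x_1,y_1)\cap\Sigma(x_2,y_2)=1$ depends only on the images in $G/\Phi(G)$. Concretely, one needs that the cyclic subgroups generated by the images of $x_i,y_i,x_iy_i$ in $G/G'\cong C_{p^e}\times C_{p^e}$ pairwise intersect trivially at the level of $\F_p^2$ lines. Counting pairs of bases of $\F_p^2$ whose six associated lines are distinct, and lifting, should give
\[
N=p^{4e-4}(p-1)(p-2)(p-3)(p-4).
\]
This identity element contributes $N/72$, the main term of Theorem A.

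\textbf{Step 3: Burnside sum over the non-identity elements of $H$.} An element $h\in H$ fixes an $\Aut(G)$-class exactly when some $\alpha\in\Aut(G)$ realises $h$ on a representative structure. I will treat the elements by type.
\begin{itemize}
\item[(a)] Elements $(\sigma,1)$ or $(1,\tau)$ with exactly one coordinate trivial. The trivial coordinate forces $\alpha$ to fix a generating pair, so $\alpha=\id$ by freeness. That is incompatible with a nontrivial permutation of the other triple, whose entries lie in distinct $\Sigma$-classes, so these elements fix nothing.
\item[(b)] Elements $(\sigma,\tau)$ with both coordinates nontrivial. Here $\alpha$ must induce on $G/\Phi(G)=\F_p^2$ a matrix of order $2$ or $3$ permuting both triples of lines. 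Order-$3$ matrices of the required shape permuting $x,y,(xy)^{-1}$ cyclically exist in $\GL(2,p)$ with fixed-point-free action on lines only when $p\equiv1\pmod 3$. I expect this to produce the constant $24$ and the difference between the terms $4(p-1)(p-4)$ and $4(p-1)(p-2)$.
\item[(c)] Elements involving the swap. Here $\alpha$ interchanges the two triples up to permutation. The fixed classes are parametrised by one generating pair together with a constrained choice of $\alpha$, which gives contributions of order $p^{2e-2}$, namely the $6(p-3)(p-5)$ term.
\end{itemize}

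\textbf{Main obstacle.} The hardest step is (b)/(c) at the level of $G$ rather than $G/G'$. A matrix of order $2$ or $3$ on the abelianization lifts to many automorphisms, but the condition is that the lift maps $x_i$ exactly to (a conjugate of) the permuted entry, including the third entry $y^{-1}x^{-1}$. Since the permutation action replaces, for example, $(x,y,z)$ by $(y,y^{-1}xy,z)$, commutator corrections appear. Freeness gives a unique candidate $\alpha$, and one must check that its order really is $2$ or $3$ modulo $\Inn(G)$. I would first work this out explicitly with $2\times2$ matrices over $\Z/p^e$ plus a commutator correction term. Then I would count solutions with $p^{e-1}$-fold lifting, which I expect to explain why these terms scale as $p^{2e-2}$ instead of $p^{4e-4}$. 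Finally, I would sum all contributions and divide by $72$.
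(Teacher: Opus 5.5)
There is a genuine gap, and it starts in Step 1. Conjugation is \emph{not} absorbed into the diagonal $\Aut(G)$-action. $\Au(G)$ contains $I(G)\times I(G)\supseteq\DInn(G)\times\DInn(G)$, so the two triples can be conjugated independently. Moreover, the braid moves form an $S_3$ only modulo such one-sided conjugations; for instance $(\sigma_3^2,\sigma_0)$ conjugates $T_1$ by $y_1$ and leaves $T_2$ alone. So the group induced on $\U(G)/\Aut(G)$ is not $(S_3\times S_3)\rtimes C_2$. Also $|\U(G)|/|\Aut(G)|=|\Phi(G)|^2(p-1)(p-2)(p-3)(p-4)=p^{4e+2j-4}(p-1)(p-2)(p-3)(p-4)$, which is not the $N$ you assert in Step 2, so your Steps 1 and 2 are inconsistent. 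The right object is $K=(\DInn(G)\times\DInn(G))\DDAut(G)$, of order $|\Aut(G)||\Inn(G)|$. Your relative-freeness argument shows $K$ acts freely on $\U(G)$. It commutes with all $\sigma_i$ and $\beta_w$, it is normalized by $\tau$, and $J(G)\times J(G)$ acts trivially on $K$-orbits. Hence $(S_3\times S_3)\rtimes C_2$ acts on $X=\U(G)/K$, and $|X|=p^{4e-4}(p-1)(p-2)(p-3)(p-4)$ because $|\Inn(G)|=|G'|^2$.

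Step 3 carries the real content, but it is only a list of expectations, and those in (b) are wrong. Matrices in $\GL(2,p)$ with characteristic polynomial $X^2+X+1$ realizing a $3$-cycle exist for every $p\geq 5$. What depends on $p \bmod 3$ is whether they have eigenlines, which the second triple must avoid. The correct fixed-point counts on $X$ are as follows.
\begin{itemize}
\item The four pairs $(\rho,\rho')$ of $3$-cycles each fix $p^{2e-2}(p-1)(p-2)$ points if $p\equiv -1\pmod 3$, and $p^{2e-2}(p-1)(p-4)$ points if $p\equiv 1\pmod 3$.
\item The other $31$ non-identity elements without the swap fix nothing.
\item Among the $36$ swap elements, the six involutions $(\sigma,\sigma^{-1})\tau$ (all conjugate to $\tau$) each fix $p^{2e-2}(p-3)(p-5)$ points.
\item The $18$ swap elements of order $4$ fix nothing, since their squares are pairs of transpositions.
\item The $12$ swap elements $h$ of order $6$ fix exactly $2$ points each when $p\equiv 1\pmod 3$, and none otherwise. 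Fixing $T_1$, one needs $\theta^2=\phi$, where $\phi$ realizes the $3$-cycle $h^2$ on $T_1$. Only the two square roots other than $\pm\phi^2$ give admissible second triples.
\end{itemize}
The last item, not (b), is where the constant $24$ comes from.

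Your ``main obstacle'' also disappears once one-sided conjugation is used. Conjugation by $x^ry^s$ sends $(x,y)$ to $(x[x,y]^s,y[y,x]^r)$, so $\Inn(G)$ is exactly the kernel of $\Aut(G)\to\Aut(G/G')$. Hence $\Out(G)\cong\GL(2,\Z/p^e\Z)$ and $X\cong\U(G/G')/\Aut(G/G')$ equivariantly. The count therefore equals the abelian one for $C_{p^e}\times C_{p^e}$, which is why the formula does not involve $j$. Corrected this way, your route is a cleaner alternative to the paper's, which computes $\Stab_{\Au(G)}(t)$ directly in $G$ and tracks the commutator parts $c,d$ by hand.
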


\begin{thmB}
\label{thmB}
	Let $p\geq 5$ be a prime and let $G$ be a Beauville $p$-group given by the following presentation:
		\begin{equation*}
			G=\langle a,b \mid a^{p^e}=[b,a]^{p^j}=[b,a,a]=[b,a,b]=1, b^{p^i}=[b,a]^{p^k} \rangle,
		\end{equation*}
		where $0<k<j\leq i\leq e$ and $e= i+j-k$.
		Then the number of isomorphism classes of Beauville surfaces with $G$ is
		\[
		\frac{1}{72}
		\Big(
		p^{4e-6} (p+1)(p-1)^2(p-2)(p-3)(p-4)
		+
		6p^{2e-j-3}(p-1)(p-3)(p-5)
		\Big).
		\]		
\end{thmB}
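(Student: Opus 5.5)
Following \cite[Corollary 2]{GT}, I reduce Theorem B to counting the orbits of $\Au(G)$ on $\U(G)$. Here $\Au(G)$ is generated by $\Aut(G)$ acting diagonally, by the $S_3\times S_3$ of permutations of the entries of $T_1$ and of $T_2$ (as in \cite{GJT}, realised through the braid-type moves $(x,y,z)\mapsto(y,x,\ldots)$ up to conjugation), and by the swap $(T_1,T_2)\mapsto(T_2,T_1)$. Inner automorphisms already act diagonally, so conjugation is absorbed into $\Aut(G)$. Modulo $\Aut(G)$ the extra group acting is $(S_3\times S_3)\rtimes C_2$, of order $72$, which explains the factor $1/72$.

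The plan has three steps. \textbf{(1)} Describe $\U(G)$. Since $G$ has class $2$ and $p\geq5$, $G$ is regular and $(xy)^{p^m}=x^{p^m}y^{p^m}[y,x]^{\binom{p^m}{2}}$. Hence the condition $\Sigma(x_1,y_1)\cap\Sigma(x_2,y_2)=1$ depends only on the subgroups $\Omega_1$ of the cyclic groups generated by the six elements. Using $G/\Phi(G)\cong C_p\times C_p$ and the structure of the $\Omega_1$-layer, I reduce this, as in the Fern\'andez-Alcober--G\"ul criterion \cite{FG}, to a condition on the images of the six elements in a suitable quotient. The six elements $x_1,y_1,x_1y_1,x_2,y_2,x_2y_2$ must give pairwise distinct ``directions''. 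A direction here is a point of the projective line over the appropriate quotient, and the relation $b^{p^i}=[b,a]^{p^k}$ makes the element orders unequal, which is why $(p+1)$ appears. \textbf{(2)} Compute $|\Aut(G)|$, or at least the size of its image acting on pairs of generators. Every pair $(x,y)$ generating $G$ and satisfying the defining relations determines a unique automorphism. So $\Aut(G)$ acts regularly on such generating pairs, and the number of them can be computed directly from the presentation. I then obtain $|\U(G)|/|\Aut(G)|$, which yields the leading term $p^{4e-6}(p+1)(p-1)^2(p-2)(p-3)(p-4)$ after counting the admissible choices of $(x_2,y_2)$ relative to a normalised $(x_1,y_1)=(a,b)$.

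\textbf{(3)} Apply Burnside's lemma to the action of the order-$72$ group $H$ on $\U(G)/\Aut(G)$. This is legitimate because $\Aut(G)$ acts freely on $\U(G)$, as it acts regularly on generating pairs. For each non-identity $h\in H$ I count the classes fixed by $h$. Each such class corresponds to an automorphism $\alpha$ realising $h$ on some structure. For example, $\alpha$ of order $3$ must cyclically permute $x,y,(xy)^{-1}$, and $\alpha$ of order $2$ must swap two entries or the two triples. In contrast to Theorem A, the asymmetry between $a$ and $b$ (their orders are $p^e$ versus $p^{i}$-with-twist) should kill all contributions except those of the six conjugate elements of the form (swap of triples composed with transpositions), or similar. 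Each of these contributes $p^{2e-j-3}(p-1)(p-3)(p-5)$, giving the term $6p^{2e-j-3}(p-1)(p-3)(p-5)$.

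The main obstacle is step (3): I must decide which elements of $H$ are realisable by automorphisms of this non-symmetric group, and count their fixed structures exactly. For the order-$3$ elements and the pure transpositions, I would first show that an automorphism permuting $x,y,xy$ forces $x$ and $y$ to have the same order, with compatible behaviour modulo $\langle b^{p^i}\rangle$, and derive a contradiction from $k<j\le i$. For the surviving involutions, I would parametrise $\alpha$ by its matrix on $G/G'$ together with the correction in $G'$, and count the solutions. The exponent $2e-j-3$ should come from $|Z(G)|$-type freedom of $p^{e}\cdot p^{e-j}$ divided by the relevant normalisation.
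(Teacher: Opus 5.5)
Your proposal has two genuine gaps. The first changes the leading term. The second leaves the fixed-point count, which carries all the content, undone, and that count does not land on the stated exponent.

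\textbf{The model of $\Au(G)$.} Conjugation is not absorbed into the diagonal action of $\Aut(G)$. The group $B(G)=(I(G)\times I(G))\DDAut(G)$ contains $\DInn(G)\times\DInn(G)$ and $J(G)\times J(G)$, so $T_1$ and $T_2$ may be conjugated \emph{independently}; this is invisible for abelian groups but not here. For the same reason, the braid moves induce an $S_3\times S_3$-action only modulo $N=\{(\phi,\psi)\in\Aut(G)^2 \mid \phi\psi^{-1}\in\Inn(G)\}$, not modulo diagonal automorphisms: $\sigma_3^2$ conjugates a single triple by $y$. The group $N$ acts freely and has order $|\Aut(G)||\Inn(G)|$, which is what $|\Au(G)|=72|J(G)|^2|\Aut(G)||\Inn(G)|$ records. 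Concretely, $|G|=p^{e+i+j}$ and $|\Aut(G)|=(p-1)p^{4i+2j-1}$ give
$|\U(G)|/|\Aut(G)|=p^{4e+2j-6}(p+1)(p-1)^2(p-2)(p-3)(p-4)$. This is $|\Inn(G)|=p^{2j}$ times the leading term you claim. You also cannot normalise $(x_1,y_1)=(a,b)$, because the image of $\Aut(G)$ in $\GL(2,p)$ has order only $p(p-1)$; that, not unequal element orders, is why $(p+1)$ survives. With $N$ in place of $\Aut(G)$, Burnside for the order-$72$ group on $\U(G)/N$ is a legitimate repackaging of the paper's computation of $\sum_t|\Stab_{\Au(G)}(t)|$.

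\textbf{Step (3).} Element orders cannot help: $b^{p^i}=[b,a]^{p^k}$ has order $p^{e-i}$, so every element outside $\Phi(G)$ has order $p^e$. What works is linear algebra in $G/\Phi(G)$.
Every automorphism induces $\bar a\mapsto\bar a+n\bar b$, $\bar b\mapsto s\bar b$, which has eigenvalue $1$. Hence no automorphism induces a $3$-cycle on a triple, since that map has characteristic polynomial $t^2+t+1$. An automorphism inducing a transposition on $\overline{T}_1$ is an involution whose $1$-eigenline already contains an entry of $\overline{T}_1$. It therefore induces no permutation of $\overline{T}_2$, since a transposition there would put a second entry on that line.
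So only the six conjugate involutions $(\pi,\pi^{-1})\tau$ have fixed points. A class fixed by $\tau$ is $(T_1,\phi(T_1))$ with $\phi$ an involution modulo $\Inn(G)$. Each of the six fixes $I(p-1)^2(p-3)(p-5)|\Phi(G)|^2/|\Aut(G)|$ classes, where $I$ is the number of involutions in $\Out(G)$. Counting $I$ is the paper's key lemma, and your sketch never addresses it.

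\textbf{The target exponent.} Computing $I$ does not give your exponent $2e-j-3$. The kernel of $\Aut(G)\to\Aut(G/G')$ is $\{a\mapsto ac_a,\ b\mapsto bc_b \mid c_a,c_b\in G'\}=\Inn(G)$. So $\Out(G)$, of order $(p-1)p^{4i-1}$, embeds in $\Aut(C_{p^e}\times C_{p^i})$. There the class of $a\mapsto a$, $b\mapsto b^{-1}$ has centraliser the diagonal classes $a\mapsto a^{1+mp^{e-i}}$, $b\mapsto b^{s}$, of order $(p-1)p^{2i-1}$, so it has $p^{2i}$ conjugates. The paper's Sylow argument, run with the centraliser order $p^{2i-1}$ in $\overline{\Aut_{\Phi(G)}(G)}$, then gives $I=p^{2i}$, not $p^{2i-j}$. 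The paper's count of $p^{2i+j-1}$ lets the $G'$-exponent $v$ vary, although that exponent is killed modulo $\Inn(G)$.
Each involution therefore fixes $p^{2e-3}(p-1)(p-3)(p-5)$ classes. A correct execution of either your plan or the paper's yields the second term $6p^{2e-3}(p-1)(p-3)(p-5)$, as in Theorem C, and this differs from the displayed formula for $p\geq 7$.
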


\begin{thmC}
	\label{thmC}
		Let $p\geq 5$ be a prime and let $G$ be a metacyclic Beauville $p$-group given by the following presentation:
		\begin{equation*}
			G=\langle a,b \mid a^{p^e}=b^{p^e}=1, [a,b]=a^{p^i} \rangle,
		\end{equation*}
		where  $1 \leq i \leq e-1$.
		Then the number of isomorphism classes of Beauville surfaces with $G$ is
		\[
		\frac{1}{72}
		\Big(
		p^{4e-6} (p+1)(p-1)^2(p-2)(p-3)(p-4)
		+
		6p^{2e-3}(p-1)(p-3)(p-5)
		\Big).
		\]	
\end{thmC}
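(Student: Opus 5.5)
Theorem C will be proved by counting orbits with Burnside's lemma, following the strategy of \cite{GJT} and \cite[Corollary 2]{GT}. The number of isomorphism classes of Beauville surfaces with group $G$ equals the number of $\Au(G)$-orbits on $\U(G)$. The group $\Au(G)$ is generated by four kinds of operations: simultaneous conjugation, applying $\Aut(G)$ diagonally, permuting the three entries of each triple by $S_3$ (acting independently on $T_1$ and $T_2$), and swapping $T_1$ and $T_2$. Inner automorphisms are already contained in $\Aut(G)$, so the relevant group is $(\Aut(G)\times (S_3\times S_3))\rtimes C_2$, acting through $\Out$-type data.

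The first step is to understand $\U(G)$ for the metacyclic group $G=\langle a,b\mid a^{p^e}=b^{p^e}=1,[a,b]=a^{p^i}\rangle$. Since $p\ge5$ and $G$ is regular (indeed power-structured in the sense of \cite{FG}), $\Omega$ and $\mho$ behave well. A generating pair $\{x,y\}$ consists of elements of order $p^e$, and the same holds for $xy$. Moreover, $\Sigma(x,y)\cap\Sigma(x',y')=1$ reduces to the condition that the images of the subgroups of order $p$, namely $\langle x^{p^{e-1}}\rangle$, $\langle y^{p^{e-1}}\rangle$ and $\langle (xy)^{p^{e-1}}\rangle$ up to conjugacy, are distinct. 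The power map $x\mapsto x^{p^{e-1}}$ factors through $G/\Phi(G)\cong C_p\times C_p$ onto $\Omega_1(G)$. Here I expect $\Omega_1(G)\cong C_p\times C_p$, central modulo the relevant conjugation, so that $\langle x^{p^{e-1}}\rangle$ depends only on the line of $\bar x$ in $\F_p^2$.

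This yields the criterion that $(T_1,T_2)$ is a Beauville structure if and only if the six lines determined by $\bar x_1,\bar y_1,\overline{x_1y_1},\bar x_2,\bar y_2,\overline{x_2y_2}$ in $\P^1(\F_p)$ are pairwise distinct. We must also check that conjugates do not create extra coincidences; when the subgroup of order $p$ generated by $x^{p^{e-1}}$ lies in $Z(G)$, this is automatic. From the criterion, $|\U(G)|$ equals the number of generating pairs squared, restricted by this line condition. The count of generating pairs is $|G|^2(1-1/p)(1-1/p^2)$, and the line constraint contributes the factor $(p-2)(p-3)(p-4)/\ldots$. This should match the leading term divided by $|\Aut(G)|$.

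The second step is to determine $\Aut(G)$, or at least its order and its image in $\GL_2(p)$. I would compute that every assignment $a\mapsto a^\alpha b^{\beta p^{?}}\cdots$ satisfying the relation extends. The image in $\GL_2(\F_p)$ is the Borel subgroup fixing the line of $\bar a$, since $\langle a\rangle$ is characteristic-ish and equals the preimage of $G'$-related data. This explains the factor $(p+1)$: the action of $\Aut(G)$ on the lines is no longer transitive.

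Next I apply Burnside's lemma. Because $\Aut(G)$ acts freely on generating pairs, every nontrivial element of the finite group $S_3\times S_3\rtimes C_2$ contributes fixed points only through automorphisms $\varphi$ realising a nontrivial permutation of a triple. The identity class gives the main term $p^{4e-6}(p+1)(p-1)^2(p-2)(p-3)(p-4)$ after dividing by $72$. I must then analyse which permutations can be realised:
\begin{itemize}
\item order-3 cycles would require $\varphi$ acting with order 3 on the lines, which would need the whole $\GL_2$ behaviour and here will be impossible;
\item the swap $\sigma$ of $T_1,T_2$ combined with transpositions gives the term $6p^{2e-3}(p-1)(p-3)(p-5)$.
\end{itemize}

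The main obstacle is this fixed-point count. One must find all $\varphi\in\Aut(G)$ with $\varphi(x_1)=x_2$, $\varphi(y_1)=y_2$, and $\varphi^2$ acting as the inner automorphism from conjugation, and similarly for transposition types such as $x\leftrightarrow y$. This requires explicit formulas for $\varphi^2$ in the metacyclic group, including the commutator correction terms in $(xy)^n$, which are exactly where nonabelianness changes the exponent from $2e-2$ to $2e-3$. I would first parametrise automorphisms by matrices lifting the Borel and count solutions with Hall–Petrescu to control powers.
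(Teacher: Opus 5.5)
Your framework matches the paper's: Cauchy--Frobenius over the $\Au(G)$-action, $\U(G)$ described by six distinct lines in $\P^1(\F_p)$, the identity giving the main term, and swap-type elements giving the correction. Your derivation of the line criterion via the central subgroup $G^{p^{e-1}}$ is fine, and more direct than the paper's citation. However, two steps are wrong and the decisive count is missing.

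First, the image of $\Aut(G)$ in $\GL(2,p)$ is not the Borel subgroup. The relation $[a,b]=a^{p^i}$ forces $\theta(b)=b^{1+rp^{e-i}}a^s$, so the image is $\{\bar a\mapsto r\bar a,\ \bar b\mapsto \bar b+\lambda\bar a\}$, of order $p(p-1)$. This is what gives $|\Aut(G)|=(p-1)p^{2e+2i-1}$, and hence the main term you state.

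It is also the real reason 3-cycles cannot occur. A 3-cycle of $(\bar x,\bar y,\overline{(xy)^{-1}})$ forces characteristic polynomial $X^2+X+1$, whereas every element of this image has eigenvalue $1$, and $3\neq 0$ in $\F_p$. With the Borel subgroup your bullet would be false for $p\equiv 1\pmod 3$: lifts of $\mathrm{diag}(\omega,\omega^2)$, with $\omega\in\F_p$ a primitive cube root of unity, would realise 3-cycles. That is exactly what produces the $p \bmod 3$ dependence in Theorem A.

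Likewise, conjugation acts independently on $T_1$ and $T_2$, so it cannot be absorbed into diagonal $\Aut(G)$. Let $(\Out(G)\times S_3\times S_3)\rtimes C_2$ act on $\U(G)/(\Inn(G)\times\Inn(G))$, or equivalently use $|\Au(G)|=72|J(G)|^2|\Aut(G)||\Inn(G)|$. Otherwise the identity term is off by a factor $|\Inn(G)|$. You also need to exclude a transposition on one triple without the swap. The reason it fails: the $+1$-eigenline of the resulting reflection would have to contain an element of each triple.

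The genuine gap is the second term. An element $(\alpha\pi_1,\alpha\pi_2)\tau$ has fixed points only if $\pi_2=\pi_1^{-1}$ and $\alpha$ is an involution of $\Out(G)$. Here $\pi_1$ can be any of the six elements of $S_3$, not just a transposition; that is where the $6$ comes from. For fixed $\alpha$, the admissible $T_1$ are those whose three lines avoid the two $\bar\alpha$-invariant lines and contain no pair swapped by $\bar\alpha$. This gives $(p-1)^2(p-3)(p-5)|\Phi(G)|^2$ triples, each with $6|\Inn(G)|$ partners $T_2$.

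What remains is the number of involutions in $\Out(G)$, which the paper shows is $p^{2i}$. The image above has exactly $p$ involutions, $\bar a\mapsto -\bar a$, $\bar b\mapsto \bar b+\lambda\bar a$. Above each, the involutions of $H=\langle\overline{\alpha_\lambda}\rangle\ltimes\overline{\Aut_{\Phi(G)}(G)}$ number $|H:C_H(\overline{\alpha_\lambda})|=p^{2i-1}$.

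Your plan (Hall--Petrescu, commutator corrections in $(xy)^n$) does not address this count, and it is not where the exponent $2e-3$ comes from. Compared with the abelian case, the change from $p^{2e-2}$ to $p^{2e-3}(p-1)$ comes entirely from the ratio $p^{2i}/|\Aut(G)|=1/\big((p-1)p^{2e-1}\big)$. This replaces the ratio of reflections to $|\GL(2,\Z/p^e\Z)|$ used in the abelian case.
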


\textit{Organization:} In \cref{characterization}, we state the characterization of non-abelian metacyclic Beauville $p$-groups and Beauville $p$-groups of nilpotency class $2$, and we deal with their automorphism groups. In \cref{all B.S.}, we determine all Beauville structures for the groups given in \cref{characterization}.
In \cref{the group}, we construct the group $\Au(G)$ and calculate its order when $G$ is one of the groups stated in main theorems.
And in \cref{final results}, we prove our main theorems.

\vspace{10pt}

\noindent

\textit{Notation:\/}
We use standard notation in group theory. If $G$ is a group, then we
denote by $\cl(G)$ the number of distinct conjugacy classes of $G$. If $G$ is a finite $p$-group and $i\geq 1$, then  we write $G^{p^i}$ for the subgroup generated by all powers $g^{p^i}$, as $g$ runs over $G$ and $\Omega_i(G)$ for the subgroup generated by the elements of $G$ of order at most $p^i$.

%

\section{Non-abelian metacyclic Beauville $p$-groups and Beauville $p$-groups of nilpotency class $2$}
\label{characterization}

The results of this section are mainly from \cite{AMM, gul, men}.
Let us first state the classification of non-abelian metacyclic Beauville $p$-groups.

\begin{thm}{\cite[Theorem 2.1]{gul}}
	\label{presentation-metacyclic}
	Let $G$ be a non-abelian metacyclic Beauville $p$-group of exponent $p^e$.
	Then $p\geq 5$ and $G$ has the following presentation:
	\begin{equation}
		\label{powerful beauville}
		G=\langle a,b \mid a^{p^e}=b^{p^e}=1, [a,b]=a^{p^i} \rangle,
	\end{equation}
	where  $1 \leq i \leq e-1$.
\end{thm}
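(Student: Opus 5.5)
The plan has two halves: first show that $G$ has the stated presentation, then show that $p\geq 5$.

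\emph{The presentation.} Since $G$ is a metacyclic $p$-group and a Beauville group, it is $2$-generated. A Beauville group cannot have a cyclic quotient of the form $C_{p^k}$ with $k\ge1$ as $G/\Phi(G)$; more to the point, the Beauville condition forces $G/\Phi(G)\cong C_p\times C_p$ to carry a Beauville-like structure. We would invoke the known result that a Beauville $p$-group has at least $p+1$ maximal subgroups covered in a controlled way, so that the quotient $G/G'G^{p^{k}}$ behaves like an abelian Beauville group. Concretely, pass to the abelianization $G/G'$. Applying the two triples of a Beauville structure modulo $G'$ shows that the two cyclic factors of $G/G'$ must have equal order; otherwise one factor dominates and the elements of maximal order of the two triples would share a nontrivial power in a common cyclic subgroup. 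Write the metacyclic group as $\langle a\rangle\triangleleft G$ with $G/\langle a\rangle$ cyclic. We would then use the standard classification of metacyclic $p$-groups (for $p$ odd): $G=\langle a,b\mid a^{p^m}=1,\ b^{p^n}=a^{p^{m-s}},\ a^b=a^{1+p^{m-r}}\rangle$.

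\emph{Reducing the parameters.} The core step is to show that the Beauville condition forces $s=0$ (a split extension) and $m=n=e$. The argument compares the orders of the elements $a^\lambda b^\mu$. In a regular $p$-group (for $p\geq 5$, or more generally for $p$ odd metacyclic groups, which are regular or powerful), $(a^\lambda b^\mu)^{p^t}=a^{\lambda'p^t}b^{\mu p^t}$. If $|a|\neq|b|$ modulo the relevant subgroup, then all generating pairs contain an element whose $p^{e-1}$-th power lies in a fixed subgroup $\Omega$ of order $p$ inside $Z(G)$. That would give a nontrivial intersection $\Sigma(x_1,y_1)\cap\Sigma(x_2,y_2)$, contradicting the Beauville condition. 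This step, namely controlling the $p^{e-1}$-powers of all elements of $x_i,y_i,x_iy_i$, is the main obstacle. The criterion to aim for is the one of Fern\'andez-Alcober and G\"ul \cite{FG}: for $p$-groups with good power behaviour, $G$ is Beauville if and only if $|G^{p^{e-1}}|$ equals $p^2$, i.e.\ $G^{p^{e-1}}\cong C_p\times C_p$ and $p\ge5$. Since metacyclic groups for odd $p$ are powerful, this criterion applies, and $G^{p^{e-1}}=\langle a^{p^{e-1}},b^{p^{e-1}}\rangle$ must have order $p^2$. This forces both $a$ and $b$ to have order $p^e$ and $\langle a\rangle\cap\langle b\rangle=1$, giving $a^{p^e}=b^{p^e}=1$ with a split extension. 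Writing $[a,b]=a^{p^i}$ after replacing $b$ by a suitable power, we get $i\ge1$ (from powerfulness), and $i\le e-1$ (from non-abelianness).

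\emph{Excluding $p=2,3$.} For $p=3$, the same $C_p\times C_p$ quotient argument works: any two generating triples of $C_3\times C_3$ cover all four cyclic subgroups, so lifting forces a common nontrivial $p^{e-1}$-power. For $p=2$ we would instead use that metacyclic $2$-groups are not Beauville, citing the known $2$-group exclusion or checking directly that the powerful-group argument above fails. Together these give $p\geq 5$.
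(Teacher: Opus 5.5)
There is a genuine gap: you never actually exclude $p=2$. Note that the paper gives no proof of this statement; it only quotes \cite[Theorem 2.1]{gul}.

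Your odd-$p$ argument is sound once reduced to its last step. For odd $p$ a metacyclic $p$-group is powerful, since $G'\le\langle a^p\rangle\le G^p$. So the criterion of \cite{FG} applies and gives both $p\ge5$ and $|G^{p^{e-1}}|=|\langle a^{p^{e-1}},b^{p^{e-1}}\rangle|=p^2$. Hence $o(a)=o(b)=p^e$ and $\langle a\rangle\cap\langle b\rangle=1$, and replacing $b$ by a suitable power prime to $p$ gives $[a,b]=a^{p^i}$.

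None of your justifications for $p=2$ works:
\begin{itemize}
\item The criterion of \cite{FG} needs $x^{2^{e-1}}=y^{2^{e-1}}\iff(xy^{-1})^{2^{e-1}}=1$, and metacyclic $2$-groups need not satisfy it. In $D_8=\langle r,s\rangle$ one has $s^2=(sr)^2=1$ but $(s(sr)^{-1})^2=r^2\neq1$. Metacyclic $2$-groups need not be powerful, and non-abelian $2$-groups are never regular.
\item There is no general exclusion to cite. Beauville $2$-groups exist, and the paper's exclusion of Beauville $2$-groups covers only nilpotency class $2$, whereas metacyclic $2$-groups (dihedral, semidihedral, \dots) have unbounded class.
\item Observing that the powerful-group argument fails proves nothing about $G$.
\end{itemize}
You need an argument specific to metacyclic $2$-groups. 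One way to start: take $N=\langle a\rangle\trianglelefteq G$ with $G/N$ cyclic. Every cyclic subgroup meeting $N$ nontrivially contains the involution of $N$, so one of the two triples must consist of elements generating subgroups that meet $N$ trivially. When $|G:N|=2$ this is impossible, since $N$ is then a cyclic maximal subgroup and each triple contains a generator of $N$. The case $|G:N|\ge 4$ still requires real work.

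Separately, delete your abelianization step, because it is false. The Beauville condition does not descend to quotients: cyclic subgroups meeting trivially in $G$ can have images that meet. The theorem's own conclusion refutes the claim, since $\langle a,b\mid a^{p^e}=b^{p^e}=1,\ [a,b]=a^{p^i}\rangle$ has abelianization $C_{p^i}\times C_{p^e}$ with $i<e$. Your later argument does not depend on this step, so dropping it costs nothing.

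Your $p=3$ remark is also inaccurate as stated: two generating triples of $C_3\times C_3$ need not cover all four subgroups; what is true is that they must share at least two. It is redundant anyway, since \cite{FG} already gives $p\ge5$ for odd $p$.
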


\begin{rmk}{\cite[Remark 2.2]{gul}}
Non-abelian metacyclic Beauville $p$-groups are exactly the same as non-abelian powerful Beauville $p$-groups.  
\end{rmk}

\begin{lem}{\cite[p. 1868]{JNS}}
\label{center-powerful}
Let $p$ be an odd prime and let $G$ be a metacyclic $p$-group with the following presentation:
\begin{equation*}
	G=\langle a,b \mid a^{p^e}=b^{p^e}=1, [a,b]=a^{p^i} \rangle,
\end{equation*}
where  $1 \leq i \leq e-1$.
Then $Z(G)=\langle  a^{p^{e-i}}, b^{p^{e-i}} \rangle$.
\end{lem}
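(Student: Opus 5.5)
The plan is to compute the commutator relations in $G$ explicitly, and then read off which elements $a^r b^s$ commute with both generators. Since $G$ is generated by $a$ and $b$, we have $Z(G)=C_G(a)\cap C_G(b)$.

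\emph{Structure of $G$.} From $[a,b]=a^{p^i}$ we get $a^b = a\,[a,b] = a^{1+p^i}$. Hence $A=\langle a\rangle$ is normal, $b$ acts on it by the automorphism $a\mapsto a^{1+p^i}$, and $G=A\langle b\rangle$. The presentation defines a split metacyclic group with $|G|=p^{2e}$. This needs $(1+p^i)^{p^e}\equiv 1 \pmod{p^e}$, which holds because $p$ is odd: the multiplicative order of $1+p^i$ modulo $p^e$ is exactly $p^{e-i}$. So every element has a unique normal form $a^r b^s$ with $0\le r,s<p^e$.

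\emph{Centraliser of $b$.} An element $a^r b^s$ commutes with $b$ if and only if $a^r$ does. Since $(a^r)^b = a^{r(1+p^i)}$, this happens if and only if $r p^i\equiv 0 \pmod{p^e}$, that is, $p^{e-i}\mid r$.

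\emph{Centraliser of $a$.} An element $a^r b^s$ commutes with $a$ if and only if $b^s$ does. Since $a^{b^s} = a^{(1+p^i)^s}$, this happens if and only if $(1+p^i)^s\equiv 1 \pmod{p^e}$, that is, $p^{e-i}\mid s$. This uses the order computation above, which for odd $p$ follows from the lifting-the-exponent lemma: $v_p\big((1+p^i)^s-1\big)=i+v_p(s)$.

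\emph{Conclusion.} Combining the two conditions gives $Z(G)=\{a^r b^s : p^{e-i}\mid r,\ p^{e-i}\mid s\}=\langle a^{p^{e-i}}, b^{p^{e-i}}\rangle$. This set is indeed a subgroup: $a^{p^{e-i}}$ and $b^{p^{e-i}}$ commute with each other because both are central.

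The main obstacle is justifying the normal form, i.e.\ that the presentation really has order $p^{2e}$ and does not collapse. This follows from the existence of the semidirect product $C_{p^e}\rtimes C_{p^e}$ in which the generator of the second factor acts by $x\mapsto x^{1+p^i}$. That action is well defined precisely because the order of $1+p^i$ modulo $p^e$ is $p^{e-i}$, which divides $p^e$.
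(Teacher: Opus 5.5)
Your proof is correct. The paper does not actually prove this lemma; it only quotes it from \cite[p.~1868]{JNS}. So your computation is a self-contained replacement for that citation rather than a variant of an argument in the paper.

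Your route is the natural one. You realise $G$ as the split extension $C_{p^e}\rtimes C_{p^e}$ with $b$ acting by $a\mapsto a^{1+p^i}$. The bound $|G|\le p^{2e}$ comes from normality of $\langle a\rangle$, and the reverse bound from the existence of the semidirect product. This gives the normal form $a^rb^s$ and $o(a)=p^e$. You then read off $C_G(b)$ from $a^{rp^i}=1$ and $C_G(a)$ from $(1+p^i)^s\equiv 1 \pmod{p^e}$.

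What your version buys over the citation is that it shows exactly where the hypothesis that $p$ is odd is used. It is not needed for the existence of the group, since $(1+p^i)^{p^e}\equiv 1 \pmod{p^e}$ holds for every prime. It is needed only for the fact that $1+p^i$ has multiplicative order exactly $p^{e-i}$ modulo $p^e$. That step is genuinely necessary: for $p=2$, $i=1$, $e\ge 3$, the order of $3$ modulo $2^e$ is $2^{e-2}$, so $b^{2^{e-2}}$ is central and the stated formula for $Z(G)$ fails.

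As a by-product you also get $|Z(G)|=p^{2i}$, which is consistent with $|\Inn(G)|=p^{2(e-i)}$ in \cref{inner-powerful}.
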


%

We next give the automorphism group of non-abelian metacyclic Beauville $p$-groups. To this purpose, we will state the result in \cite{gul}, which refers to the paper \cite[p.82]{men}.

\begin{pro}{\cite[Proposition 2.8]{gul}}
	\label{automorphism-powerful}
	Let $p$ be an odd prime and let $G$ be a metacyclic $p$-group with the following presentation:
	\begin{equation*}
		G=\langle a,b \mid a^{p^e}=b^{p^e}=1, [a,b]=a^{p^i} \rangle,
	\end{equation*}
	where  $1 \leq i \leq e-1$.
	Then a map $\theta$ defined on the generators $a$ and $b$ extends to an automorphism of $G$ if and only if
	\[
	\theta(a)= b^{mp^{e-i}}a^n \ \ \ \
	\text{and} \ \ \ \
	\theta(b)=b^{1+rp^{e-i}}a^{s},
	\]
	where $1\leq m, r\leq p^{i}$ and $1\leq n, s\leq p^e$ with $p\nmid n$. 
	Consequently,we have
	\[|\Aut(G)|=(p-1)p^{2e+2i-1}.\]
\end{pro}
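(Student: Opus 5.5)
The plan is to use the normal form of elements of $G$ and von Dyck's theorem to reduce the statement to checking a single relation. From $[a,b]=a^{p^i}$ we get $a^b=a^{1+p^i}$, so $A=\langle a\rangle$ is a normal cyclic subgroup of order $p^e$ and $G/A=\langle bA\rangle$ is cyclic. The relation $b^{p^e}=1$ together with $|G|=p^{2e}$ forces $\langle a\rangle\cap\langle b\rangle=1$. Hence every element of $G$ can be written uniquely as $b^u a^v$ with $0\le u,v<p^e$.

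By von Dyck's theorem, the assignment $a\mapsto x$, $b\mapsto y$ extends to an endomorphism of $G$ exactly when $x^{p^e}=y^{p^e}=[x,y]x^{-p^i}=1$. Such an endomorphism is an automorphism exactly when it is surjective. Since $G$ is a powerful $p$-group with $p$ odd, we have $\Phi(G)=G^p$, so surjectivity is equivalent to $x,y$ being independent modulo $G^p=\langle a^p,b^p\rangle$. Moreover $G$ is regular and generated by elements of order $p^e$, so $\exp G=p^e$ and the first two relations hold automatically. Thus only $[x,y]=x^{p^i}$ and the generation condition need to be analysed.

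\emph{Necessity.} Write $x=b^ua^v$ and $y=b^wa^t$.
\begin{itemize}
\item Since $G'=\langle a^{p^i}\rangle\le A$, the relation gives $x^{p^i}\in A$. Modulo $A$ this reads $b^{up^i}\in A$, so $b^{up^i}=1$ and therefore $p^{e-i}\mid u$. Write $u=mp^{e-i}$.
\item Because $u\equiv 0 \pmod p$, the generation condition forces $p\nmid v$ (we rename $v=n$) and $p\nmid w$.
\item By \cref{center-powerful}, $b^{p^{e-i}}\in Z(G)$. Hence
\[
[x,y]=[a^n,b^w]=a^{n((1+p^i)^w-1)} \quad\text{and}\quad x^{p^i}=b^{mp^e}a^{np^i}=a^{np^i}.
\]
\item Since $p\nmid n$, the relation becomes $(1+p^i)^{w-1}\equiv 1 \pmod{p^e}$. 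The multiplicative order of $1+p^i$ modulo $p^e$ is $p^{e-i}$ (standard for $p$ odd), so $w\equiv 1\pmod{p^{e-i}}$, i.e.\ $w=1+rp^{e-i}$.
\end{itemize}
This gives exactly the stated form of $\theta(a)$ and $\theta(b)$.

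\emph{Sufficiency.} Reversing the computation above shows that $[x,y]=x^{p^i}$ holds for the stated images. The images are independent modulo $\Phi(G)$, since $x\equiv a^n$ and $y\equiv b\,a^s$ modulo $G^p$. So $\theta$ is a surjective endomorphism of the finite group $G$, hence an automorphism.

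\emph{Counting.} By uniqueness of the normal form, distinct parameter tuples give distinct pairs $(\theta(a),\theta(b))$, hence distinct automorphisms. Here $b^{mp^{e-i}}$ depends only on $m$ modulo $p^i$, and $b^{1+rp^{e-i}}$ only on $r$ modulo $p^i$. The number of choices is
\begin{itemize}
\item $p^i$ for $m$ and $p^i$ for $r$,
\item $(p-1)p^{e-1}$ for $n$,
\item $p^e$ for $s$.
\end{itemize}
This gives $|\Aut(G)|=(p-1)p^{2e+2i-1}$.

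The main obstacle I expect is justifying $\exp G=p^e$ and $\Phi(G)=G^p$ cleanly. For this I would rely on the facts that metacyclic $p$-groups with $p$ odd are regular and powerful, and on the power formula $(b^ua^v)^{p^e}=1$ that regularity provides.
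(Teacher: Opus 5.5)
Your proof is correct. The paper gives no argument of its own for this proposition. It quotes it from \cite[Proposition 2.8]{gul}, which in turn rests on Menegazzo's general description \cite{men} of the automorphisms of $p$-groups with cyclic derived subgroup.

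Your route is a direct, self-contained verification:
\begin{itemize}
\item von Dyck's theorem reduces everything to the single relation $[x,y]=x^{p^i}$, together with generation modulo $\Phi(G)=\langle a^p,b^p\rangle$;
\item the inclusion $G'\le\langle a\rangle$ and the normal form $b^ua^v$ force $p^{e-i}\mid u$;
\item centrality of $b^{p^{e-i}}$ collapses the relation to $(1+p^i)^{w-1}\equiv 1 \pmod{p^e}$;
\item the order $p^{e-i}$ of $1+p^i$ modulo $p^e$ then gives exactly the stated shape, and uniqueness of the normal form gives the count.
\end{itemize}
The citation buys generality, covering a whole family of groups at once. Your argument buys transparency for this specific presentation.

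You also do not actually need regularity or powerfulness for the two facts you import; both can be checked by hand.
\begin{itemize}
\item $|G|=p^{2e}$, because $G$ is the split extension of $\langle a\rangle\cong C_{p^e}$ by $C_{p^e}$ with $b$ acting as $a\mapsto a^{1+p^i}$. This is well defined since $(1+p^i)^{p^e}\equiv 1 \pmod{p^e}$.
\item $\exp G=p^e$, because $(b^wa^t)^{p^e}=a^{t(1+\lambda+\cdots+\lambda^{p^e-1})}$ with $\lambda=(1+p^i)^w\equiv 1\pmod p$. For $p$ odd this geometric sum is divisible by $p^e$.
\end{itemize}
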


It is easy to observe the following lemma.

\begin{lem}
	\label{inner-powerful}
	Let $p$ be an odd prime and let $G$ be a metacyclic $p$-group with the following presentation:
	\begin{equation*}
		G=\langle a,b \mid a^{p^e}=b^{p^e}=1, [a,b]=a^{p^i} \rangle,
	\end{equation*}
	where  $1 \leq i \leq e-1$.
	Then a map $\theta$ defined on the generators $a$ and $b$ extends to an inner automorphism of $G$ if and only if
	\[
	\theta(a)= a^{1+\mu p^i}\ \ \ \
	\text{and} \ \ \ \
	\theta(b)= ba^{\eta p^i},
	\]
	where $1\leq \mu, \eta \leq p^{e-i}$.
\end{lem}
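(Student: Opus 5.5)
The plan is a counting argument. First I show that every inner automorphism has the stated form. Then I compare the number of such maps with $|\Inn(G)|=|G:Z(G)|$, computed from \cref{center-powerful}.

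\emph{Step 1: conjugation by the generators.} The relation $[a,b]=a^{p^i}$ means $a^{-1}b^{-1}ab=a^{p^i}$. Hence $a^b=a^{1+p^i}$ and $b^a=b[b,a]=ba^{-p^i}$. Since $a^{-p^i}$ commutes with $a$, induction on $u$ gives $b^{a^u}=ba^{-up^i}$, while $a^{a^u}=a$. Conjugation by $b^v$ fixes $b$ and sends $a\mapsto a^{(1+p^i)^v}$.

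\emph{Step 2: conjugation by an arbitrary element.} Every $g\in G$ can be written as $g=a^ub^v$, because $\langle a\rangle$ is normal. Conjugation by $g$ therefore sends
\[
a\mapsto a^{(1+p^i)^v},\qquad b\mapsto (ba^{-up^i})^{b^v}=b\,a^{-up^i(1+p^i)^v}.
\]
Since $(1+p^i)^v\equiv 1 \pmod{p^i}$, the image of $a$ is $a^{1+\mu p^i}$ for some $\mu$. The image of $b$ is $ba^{\eta p^i}$ for some $\eta$. Because $a$ has order $p^e$, both $\mu$ and $\eta$ are only relevant modulo $p^{e-i}$. So every inner automorphism is among the maps listed in the lemma.

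\emph{Step 3: counting.} The listed maps, with $1\leq \mu,\eta\leq p^{e-i}$, are pairwise distinct as maps on $\{a,b\}$. Indeed, $a^{\mu p^i}$ determines $\mu$ modulo $p^{e-i}$, and likewise $a^{\eta p^i}$ determines $\eta$. So there are exactly $p^{2(e-i)}$ of them.

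On the other hand, $|G|=p^{2e}$, since $\langle a\rangle\trianglelefteq G$ has order $p^e$ and $G/\langle a\rangle$ is cyclic of order $p^e$ generated by the image of $b$. By \cref{center-powerful}, $Z(G)=\langle a^{p^{e-i}},b^{p^{e-i}}\rangle$. Because $\langle a\rangle\cap\langle b\rangle=1$, this subgroup has order $p^{i}\cdot p^{i}=p^{2i}$. Thus
\[
|\Inn(G)|=|G:Z(G)|=p^{2e-2i}.
\]

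Since the inner automorphisms form a subset of the listed maps and both sets have the same size, they coincide. This proves both directions of the lemma, including that each listed map really extends to an (inner) automorphism.

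The only delicate point is the claim $\langle a\rangle\cap\langle b\rangle=1$, which is needed for $|G|=p^{2e}$ and $|Z(G)|=p^{2i}$. I would justify it from the presentation as a split metacyclic group, i.e.\ a semidirect product $\langle a\rangle\rtimes\langle b\rangle$ in which $b$ acts on $\langle a\rangle$ by $a\mapsto a^{1+p^i}$.
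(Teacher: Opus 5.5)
Your proof is correct. The paper gives no argument for this lemma (it is stated as ``easy to observe''), so the natural comparison is with the obvious direct verification. That verification computes $a^{a^ub^v}=a^{(1+p^i)^v}$ and $b^{a^ub^v}=ba^{-up^i(1+p^i)^v}$ exactly as in your Steps 1--2. For the converse, given $\mu,\eta$, it first solves $(1+p^i)^v\equiv 1+\mu p^i \pmod{p^e}$ and then $-u(1+p^i)^v\equiv\eta \pmod{p^{e-i}}$. The first congruence is solvable because, for odd $p$, $1+p^i$ has multiplicative order $p^{e-i}$ modulo $p^e$, so it generates the units congruent to $1$ modulo $p^i$.

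You replace this surjectivity step by a count. Restricting inner automorphisms to $\{a,b\}$ is injective, which is worth saying explicitly. The image lies in a set of size $p^{2(e-i)}$, and $|\Inn(G)|=|G:Z(G)|=p^{2(e-i)}$ by \cref{center-powerful}. This does not really avoid the number theory, since determining $Z(G)$ amounts to knowing the order of $1+p^i$ modulo $p^e$. But it packages it neatly and yields $|\Inn(G)|=p^{2(e-i)}$ as a by-product. That is precisely the fact $|S|=p^{2(e-i)}$ the paper later invokes when counting involutions in $\Out(G)$.

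Your handling of the delicate point is also fine. Since $a\mapsto a^{1+p^i}$ is an automorphism of $C_{p^e}$ of order dividing $p^e$, the split extension $C_{p^e}\rtimes C_{p^e}$ exists, satisfies the defining relations and has order $p^{2e}$. This forces $|G|=p^{2e}$, $o(a)=p^e$ and $\langle a\rangle\cap\langle b\rangle=1$.
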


The following lemma is crucial for the proof of Theorem C.

\begin{thm}
	\label{powerful-elements of Out(G)}
	Let $p$ be an odd prime and let $G$ be a metacyclic $p$-group with the following presentation:
	\begin{equation*}
		G=\langle a,b \mid a^{p^e}=b^{p^e}=1, [a,b]=a^{p^i} \rangle,
	\end{equation*}
	where  $1 \leq i \leq e-1$.
	Then there are $p^{2i}$ involutions in $\Out(G)$.
\end{thm}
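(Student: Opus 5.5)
The plan is to use the structure of $\Out(G)$ as a $p$-group extended by a cyclic group of order $p-1$, to show that all involutions of $\Out(G)$ are conjugate, and then to count them as the index of the centralizer of one chosen involution.

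\textbf{Order and structure of $\Out(G)$.} By \cref{center-powerful}, $Z(G)=\langle a^{p^{e-i}}\rangle\times\langle b^{p^{e-i}}\rangle$. I will check that this product is direct (using that $\langle a\rangle\cap\langle b\rangle=1$ in this split metacyclic group). Then $|Z(G)|=p^{2i}$ and $|\Inn(G)|=|G/Z(G)|=p^{2e-2i}$. With \cref{automorphism-powerful} this gives $|\Out(G)|=(p-1)p^{4i-1}$.

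The map $\theta\mapsto n \bmod p$, taken from the parametrization in \cref{automorphism-powerful}, is a homomorphism $\Aut(G)\to\F_p^\times$. It is the action on $G/\Phi(G)$ restricted to the coordinate $a$, which is triangular there. Its kernel is the normal Sylow $p$-subgroup. Inner automorphisms lie in this kernel by \cref{inner-powerful}, so $\Out(G)=P\rtimes H$ with $|P|=p^{4i-1}$ and $H$ cyclic of order $p-1$ (by Schur--Zassenhaus).

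\textbf{Reduction to one centralizer.} Since $p$ is odd, a Sylow $2$-subgroup of $\Out(G)$ is isomorphic to one of $H$, hence cyclic, and contains a unique involution. All Sylow $2$-subgroups are conjugate, so all involutions of $\Out(G)$ are conjugate. Take $t$ to be the class of the automorphism $\tau\colon a\mapsto a^{-1},\ b\mapsto b$. It is an automorphism by \cref{automorphism-powerful}, with $n=-1$, $m=p^i$, $r=p^i$, $s=p^e$. It is not inner, since $n\not\equiv 1 \pmod p$. The number of involutions is then
\[
|\Out(G):C_{\Out(G)}(t)| = |P:C_P(t)|,
\]
because $H$ is abelian and contains $t$, so $H\le C_{\Out(G)}(t)$. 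It therefore suffices to show $|C_P(t)|=p^{2i-1}$.

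\textbf{Computing $C_P(t)$.} For $\theta$ with parameters $(m,n,r,s)$, I will compute $\tau\theta\tau^{-1}$ explicitly. The expected result is
\[
\tau\theta\tau^{-1}(a)=b^{-mp^{e-i}}a^{n'}, \qquad \tau\theta\tau^{-1}(b)=b^{1+rp^{e-i}}a^{-s'},
\]
where $n'\equiv n$ and $s'\equiv s$ modulo appropriate powers of $p$.

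Two automorphisms have the same image in $\Out(G)$ exactly when they differ by an inner automorphism. By \cref{inner-powerful}, inner automorphisms shift $n$ by multiples of $p^i$ (multiplicatively) and $s$ by multiples of $p^i$, and leave $m$ and $r$ unchanged. So the class of $\theta$ is determined by the residues of $m$, $n$, $r$, $s$ modulo $p^i$, with $p\nmid n$; note that $m\bmod p^i$ and $r\bmod p^i$ already range over $p^i$ values each, since $1\le m,r\le p^i$. The condition $t\theta t=\theta$ in $\Out(G)$ then reads $2m\equiv 0$ and $2s\equiv 0 \pmod{p^i}$, that is, $m\equiv s\equiv 0$. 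This leaves $(p-1)p^{i-1}\cdot p^{i}$ classes, namely the choices of $n$ and $r$. Hence $|C_{\Out(G)}(t)|=(p-1)p^{2i-1}$, so $|C_P(t)|=p^{2i-1}$ and the number of involutions is $(p-1)p^{4i-1}/\big((p-1)p^{2i-1}\big)=p^{2i}$.

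\textbf{Main obstacle.} The hardest step is the conjugation computation: the expressions $b^{mp^{e-i}}a^n$ do not compose additively in the parameters. Cross terms from the commutator relation $a^b=a^{1+p^i}$ may perturb $n$ and $s$ by multiples of $p^i$. I will need to check that such perturbations are absorbed exactly by $\Inn(G)$ and do not change $m$ modulo $p^i$.

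As a cross-check that avoids this bookkeeping, I can use coprime action. Because $\langle t\rangle$ acts coprimely on $P$, $P=C_P(t)[P,t]$, and $|P:C_P(t)|$ equals the number of $t$-inverted elements in $P$. I would verify independently that the classes with $n\equiv\pm1$ and $r\equiv 0$, which are candidates for inverted elements, number $p^{2i}$ (the choices of $m$ and $s$).
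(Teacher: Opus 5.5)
Your proof is correct, and it is organized differently from the paper's.

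\textbf{The paper's route.} The paper identifies $Q=\Aut(G)/\Aut_{\Phi(G)}(G)$ with the affine group of order $p(p-1)$ and lists its $p$ involutions $\alpha_\lambda$. For each $\lambda\in\F_p$ separately, it counts the involutions of $H_\lambda=\langle\overline{\alpha_\lambda}\rangle\ltimes\overline{\Aut_{\Phi(G)}(G)}$ as $|H_\lambda:C_{H_\lambda}(\overline{\alpha_\lambda})|=p^{2i-1}$. It does this through a matrix model of $\Aut(G)$ in $\GL(2,\Z/p^e\Z)$ taken modulo the inner automorphisms, and then multiplies by $p$.

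\textbf{Your route.} You note that $\Out(G)$ is a $p$-group extended by $C_{p-1}$. Its Sylow $2$-subgroups are therefore cyclic, so all its involutions are conjugate. One centralizer computation then suffices, for the most convenient representative $\tau\colon a\mapsto a^{-1},\ b\mapsto b$, which is the paper's $\lambda=0$.

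Your description of $\Out(G)$ by the residues of $(m,n,r,s)$ modulo $p^i$ is exactly what the paper's coset conditions encode. The obstacle you anticipate does not arise. Since $b^{p^{e-i}}\in Z(G)$, one gets exactly $\tau\theta\tau(a)=b^{-mp^{e-i}}a^{n}$ and $\tau\theta\tau(b)=b^{1+rp^{e-i}}a^{-s}$. So conjugation by $t$ is $(m,n,r,s)\mapsto(-m,n,r,-s)$ with no cross terms, and $|C_{\Out(G)}(t)|=(p-1)p^{2i-1}$ follows at once.

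\textbf{What each buys.} Your route avoids the $\lambda$-dependent congruence and any matrix model in which composition of automorphisms has to be tracked. The paper's route also produces the explicit shape $a\mapsto a^{-1}w_a$, $b\mapsto a^{\lambda}bw_b$ of the involutions, with $p^{2i-1}$ lying over each $\alpha_\lambda$. It uses this later in the proof of Theorem C, though it also follows from your argument by conjugating $\tau$.

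\textbf{Two minor remarks.}
\begin{enumerate}
\item The equality $|\Out(G):C_{\Out(G)}(t)|=|P:C_P(t)|$ needs a complement that contains $t$. You can take the image of the automorphisms $a\mapsto a^{n}$, $b\mapsto b$ with $n^{p-1}\equiv 1 \pmod{p^e}$. This step is superfluous anyway, since you compute $|C_{\Out(G)}(t)|$ directly.
\item Your closing cross-check is loosely formulated; for instance, classes with $n\equiv -1$ do not lie in $P$. Nothing in the proof depends on it.
\end{enumerate}
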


\begin{proof}
	Let $\Aut_{\Phi(G)}(G)$ denote the set of automorphisms which induce identity automorphism on $G/\Phi(G)$.
	Let $\alpha$ be a non-inner automorphism such that $\alpha^2 \in  \Inn(G)$. Clearly, $\Inn(G) \leq \Aut_{\Phi(G)}(G) $.
	If $\alpha \in \Aut_{\Phi(G)}(G)$, then since $\Aut_{\Phi(G)}(G)$ is a $p$-group and $p$ is odd, we have $\langle \alpha \rangle= \langle \alpha^2 \rangle \leq \Inn(G)$, which is a contradiction. 
	Thus $\alpha \in \Aut(G)\smallsetminus \Aut_{\Phi(G)}(G)$, that is, $\alpha$ has order $2$ modulo $\Aut_{\Phi(G)}(G)$. 
	
	Note that since $G/\Phi(G)\cong C_p\times C_p$ and each $\theta \in \Aut(G)$ preserves  $\Phi(G)$,  then $\theta$ induces an automorphism $\overline{\theta} \in \Aut(C_p\times C_p)\cong \GL(2, p)$, and the map
	\begin{align*} 
		\rho \colon \Aut(G) & \longrightarrow \Aut(C_p\times C_p)\\ 
		\theta & \longmapsto \overline{\theta}
	\end{align*}
	is a homomorphism. Now if we call $Q= \Aut(G)/ \Aut_{\Phi(G)}(G)$, then we have
 $Q=\rho(\Aut(G))$, and hence $|Q|=p(p-1)$. 
	Indeed, one can easily check that
	\[
	Q\cong A=\left\{ \left(\begin{array}{cc} 
		r & 0\\ 
		\lambda & 1 
	\end{array}\right)
	\in \GL(2,p) \mid r\in \F_p^{*},\  \lambda \in \F_p \right \} \leq \GL(2,p).
	\]
	Choose $r$ to be of order $p-1$. 
	Then $\langle \left(\begin{array}{cc} r & 0\\ \lambda & 1 \end{array}\right)   \rangle$ is a cyclic subgroup of order $p-1$ of $A$. If we write $p-1=2^ik$ for some odd $k \in \N$, then 
	\[
	M= \langle
	{\left(\begin{array}{cc}
			r & 0\\
			\lambda & 1 
		\end{array} \right)}^k \rangle
	\]
	is a Sylow $2$-subgroup of $A$. 
	Note that the number of Sylow $2$-subgroups of $A$ is $|A: N_A(M)|=p$.
	
	We are looking for involutions in $A$. 
	Since Sylow $2$-subgroups of $A$ are cyclic, this implies that there are at most $p$ elements of order $2$ in $A$. 
	Indeed, there are exactly $p$ elements of order $2$ in $A$, which are
	$\left(\begin{array}{cc}
		-1 & 0\\
		\lambda & 1 
	\end{array} \right)$
	for any $\lambda \in \F_p$. 
	Thus, $o(\alpha  \Aut_{\Phi(G)}(G))=2$ in $Q$ implies that 
	\[
	\alpha  \Aut_{\Phi(G)}(G)=\alpha_{\lambda}  \Aut_{\Phi(G)}(G)
	\]
	for some $\lambda \in \F_p$, where $\alpha_{\lambda}$ is represented by the matrix $\left(\begin{array}{cc}
		-1 & 0\\
		\lambda & 1 
	\end{array} \right)$.
	Then $\alpha \in \langle \alpha_{\lambda},  \Aut_{\Phi(G)}(G)\rangle= \langle \alpha_{\lambda} \rangle \ltimes \Aut_{\Phi(G)}(G)$.
	We call
\[
	H= \langle \overline{\alpha_{\lambda}} \rangle \ltimes \overline{\Aut_{\Phi(G)}(G)}\]
	 in $\Aut(G)/ \Inn(G)$. 
	
	Our aim is to find the number of involutions in $H$, that is, the number of Sylow $2$-subgroups of $H$.
	Since $\langle \overline{\alpha_{\lambda}} \rangle$ is a Sylow $2$-subgroup of $H$, this number is equal to
	\[
	|H:N_H(\langle \overline{\alpha_{\lambda}} \rangle)|=|H: C_H(\overline{\alpha_{\lambda}})|.
	\]
	
	Before determining $|H: C_H(\overline{\alpha_{\lambda}})|$, let us see the relation between $\Aut(G)$ and $\GL(2, \Z/ p^e\Z)$. 
	Let 
	\begin{align*} 
		\beta \colon a & \longmapsto a^nb^{mp^{e-i}} \\
		b & \longmapsto b^{1+rp^{e-i}}a^s,
	\end{align*}
	where $p \nmid n$ be an automorphism of $G$. Then there is an injection from $\Aut(G)$ to  $\GL(2, \Z/ p^e\Z)$ such that
	
	\begin{align*} 
		\Aut(G) & \longrightarrow \GL(2, \Z/ p^e\Z) \\
		\beta & \longmapsto 
		\left(\begin{array}{cc}
			n & mp^{e-i}\\
			s & 1+rp^{e-i}
		\end{array} \right)
	\end{align*}
	
	Call 
	$R=\left\{ 
\left(\begin{array}{cc}
	n & mp^{e-i}\\
	s & 1+rp^{e-i}
\end{array} \right) \in \GL(2, \Z/ p^e\Z) \mid p \nmid n \right\}$. 
	Then  we have $\Aut(G) \cong R$. 
	Since all inner automorphisms are of the form $a \rightarrow  a^{1+\mu p^i}$ and $b \rightarrow ba^{\eta p^i}$ and $\Inn(G) \trianglelefteq \Aut(G)$, the subgroup
	\[
	S=\left\{ \left(\begin{array}{cc}
		1+\mu p^i & 0\\
		\eta p^i& 1
	\end{array} \right) \in \GL(2, \Z/ p^e\Z) \right\}
	\]
	is normal in $R$.
	
	Take a fixed element
	$A=\left(\begin{array}{cc}
		n & mp^{e-i}\\
		s & 1+rp^{e-i}
	\end{array} \right) \in R$ and an arbitrary element
	$B=\left(\begin{array}{cc}
		1+\mu p^i & 0\\
		\eta p^i& 1
	\end{array} \right) \in S $.
	Then,  we have
	\[
	AB= \left(\begin{array}{cc}
		n(1+\mu p^i )&  mp^{e-i}\\
		s+ (s\mu+\eta)p^i& 1+rp^{e-i}
	\end{array} \right),
	\]
	and this, together with $|S|=p^{2(e-i)}$, implies that
	\[
	\left(\begin{array}{cc}
		x&  y\\
		u& v
	\end{array} \right) \in \left(\begin{array}{cc}
	n & mp^{e-i}\\
	s & 1+rp^{e-i}
\end{array} \right) S
	\]
	if and only if
	\begin{align} 
		\label{cosets of S}
		y=mp^{e-i}, \ \ \ v=1+rp^{e-i}, \ \ \ x\equiv n \pmod{p^i}, \ 
		\text{and} \ 
		u\equiv s \pmod{p^i}.
	\end{align}
	
	\vspace{15pt}
	We are now ready to determine $|H: C_H(\overline{\alpha_{\lambda}})|$. 
	Let $\overline{\beta} \in \overline{ \Aut_{\Phi(G)}(G)} \cap C_H(\overline{\alpha_{\lambda}})$. 
	Then $\alpha_{\lambda}\beta= \iota_g \beta\alpha_{\lambda}$ for some $g\in G$. Thus
	\[
	\alpha_{\lambda}(\beta(a))= ({\beta(a)}^{-1})^{g}.
	\]
	This equality, together with the fact that  $\beta \in \Aut_{\Phi(G)}(G)$, implies that
	\begin{align*} 
		\beta \colon a & \longmapsto a^{1+\ell p} \\
		b & \longmapsto b^{1+r p^{e-i}}a^{tp}.
	\end{align*}
	Now the equality $\overline{\alpha_{\lambda}}\overline{\beta}=\overline{\beta}\overline{\alpha_{\lambda}}$ implies that the corresponding matrices commute modulo $S$, and hence we have
	\begin{align*} 
	\left(\begin{array}{cc}
			-1-\ell p & 0\\
			\lambda(1+\ell p)+tp & 1+r p^{e-i}
		\end{array} \right) 
	\equiv 
		\left(\begin{array}{cc}
			-1- \ell p & 0\\
			\lambda(1+ rp^{e-i})-tp & 1+r p^{e-i}
		\end{array} \right)
		\pmod{S}.
	\end{align*}
	Then by (\ref{cosets of S}), we conclude that
	\[
	2t\equiv \lambda(r p^{e-i-1}-\ell)
	\pmod{p^{i-1}}.
	\]
	This means that $t$ is determined by  $r$ and $\ell$.
	Hence there are $p^{2i-1}$ possibilities for $\overline{\beta}$. 
	As a consequence, $|C_H(\overline{\alpha_{\lambda}})|=2p^{2i-1}$, and since $|H|=2p^{4i-2}$, we have
	\[
	|H: C_H(\overline{\alpha_{\lambda}})|=p^{2i-1},
	\]
	which is the number of involutions in $H$.
	Since the result holds for any $\lambda \in \F_p$, the number of involutions in $\Out(G)$ is $p^{2i}$. This completes the proof. 
\end{proof}

We next deal with Beauville $p$-groups of nilpotency class $2$.

\begin{pro}{\cite[Proposition 2.3]{gul}}
	\label{p=2}
	There is no Beauville $2$-group of nilpotency class $2$.
\end{pro}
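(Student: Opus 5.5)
The plan is to reduce to the classical fact that a $2$-group of class $2$ is too close to abelian for its three ``directions'' to be separated by two generating triples. Suppose for contradiction that $G$ is a Beauville $2$-group of nilpotency class $2$ with Beauville structure $\{x_1,y_1\}$, $\{x_2,y_2\}$. Since $G$ is $2$-generated, $G/\Phi(G)\cong C_2\times C_2$, which has exactly three maximal subgroups. I will use this to find elements in the two triples that lie in the same maximal subgroup, and then produce a non-trivial element common to $\Sigma(x_1,y_1)$ and $\Sigma(x_2,y_2)$.

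The first step is to observe that $x_i$, $y_i$, $x_iy_i$ lie in the three distinct maximal subgroups of $G$. Each of them generates $G$ together with one of the others, so their images in $G/\Phi(G)$ are the three non-zero vectors. After relabelling, $x_1$ and $x_2$ therefore lie in the same maximal subgroup $M$, i.e.\ $x_1\equiv x_2 \pmod{\Phi(G)}$. The same applies to the other two cosets.

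Next I will use the class-$2$ structure. For $p=2$ and class $2$, $(gh)^2=g^2h^2[h,g]$, and $G'$ is generated by the single commutator $c=[x,y]$, which is central. Squaring is therefore a homomorphism $G\to G/G'$-ish only up to commutators, but modulo $G^4G'^2$ the map $g\mapsto g^2$ induces a well-defined map on $G/\Phi(G)$. The idea is to look at the last non-trivial term of the lower exponent-$2$ series: let $N=\Omega_1(Z(G))$. My plan is to show that for every $g\notin\Phi(G)$, the cyclic subgroup $\langle g\rangle$ meets $Z(G)$ in a subgroup whose unique involution is determined only by the coset $g\Phi(G)$. If so, $x_1$ and $x_2$ have powers equal to the same involution, which lies in both $\Sigma$'s, a contradiction.

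The main obstacle is this last claim, since the involution in $\langle g\rangle$ depends on the order of $g$, and orders may vary within a coset. I would first try to compare $x_2=x_1 f$ with $f\in\Phi(G)=G^2$. Using $(x_1f)^{2^k}=x_1^{2^k}f^{2^k}[f,x_1]^{\binom{2^k}{2}}$, I would handle the orders by case analysis on whether $o(x_1)$ or $o(x_2)$ is larger, possibly replacing the pair $(x_i)$ by $(y_i)$ or $(x_iy_i)$. Failing a uniform argument, I would fall back on the structure of $2$-generator class-$2$ $2$-groups: $G/G'$ is abelian of rank $2$, and $G'$ is cyclic. In this setting I would verify directly that some triple pair is forced to share the involution of $G'$ or of $\Omega_1$ of the abelianization lifted. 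This is analogous to Catanese's criterion, which excludes $p=2$ for $C_n\times C_n$.
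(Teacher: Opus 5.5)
Your reduction to the three cosets of $\Phi(G)$ is correct. The rest of the argument, however, rests on the claim that the involution of $\langle g\rangle\cap Z(G)$ depends only on $g\Phi(G)$, and you never prove it. You flag it as the main obstacle and then only gesture at a case analysis on orders or a fallback to a classification. Worse, the claim is false for some cosets, even in class $2$. Take $G=\langle x,y\mid x^8=y^4=1,\ x^y=x^5\rangle$, so $[x,y]=x^4$ is central and $\Phi(G)=\langle x^2,y^2\rangle$. Then $y$ and $yx^2$ lie in the same coset of $\Phi(G)$ and both have order $4$. Yet $\langle y\rangle\cap Z(G)=\langle y^2\rangle$ and $\langle yx^2\rangle\cap Z(G)=\langle y^2x^4\rangle$, which are two distinct central involutions, so they are not even conjugate. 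No uniform argument can work here. The missing idea is a rule for \emph{choosing} the coset, and the exponent, so that the comparison succeeds.

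The fix is to use a fixed power rather than ``the'' involution of $\langle g\rangle$. Let $2^e=\exp G$ and $\varphi(g)=g^{2^{e-1}}$. Write $g=x^iy^jc^k$ with $c=[x,y]$ central. Class $2$ gives $g^n=x^{in}y^{jn}c^{kn-ij\binom{n}{2}}$. From $(xy)^{2^e}=1$ you get $c^{2^{e-1}(2^e-1)}=1$, hence $c^{2^{e-1}}=1$. Therefore $\varphi(g)=X^iY^jz^{ij}$, where $X=x^{2^{e-1}}$, $Y=y^{2^{e-1}}$ and $z=c^{-\binom{2^{e-1}}{2}}$ all have order at most $2$. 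So $\varphi$ is constant on each coset of $\Phi(G)$ and trivial on $\Phi(G)$, since there $i,j$ are even. Because $\exp G=2^e$, $\varphi$ is non-trivial on some coset $C=M\smallsetminus\Phi(G)$, with constant value $w\neq 1$; in particular every element of $C$ has order exactly $2^e$, so no case analysis on orders is needed. Each generating triple has exactly one element in $C$. Hence $w$ is a power of an element of $T_1$ and of an element of $T_2$, so $w\in\Sigma(x_1,y_1)\cap\Sigma(x_2,y_2)$, a contradiction. In the example above, $y\Phi(G)$ is precisely a coset on which $\varphi$ vanishes, which is why your uniform claim fails there. The paper itself only cites this proposition, so this is the step you need to supply.
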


The following result gives the characterization of all Beauville $p$-groups of nilpotency class $2$.

\begin{thm}{\cite[Theorem 2.6]{gul}}
	Let $G$ be a  Beauville $p$-group of nilpotency class $2$ and of exponent $p^e$.
	Then $p\geq 5$ and  $G$ has the following presentation:
	\begin{equation}
		\label{class 2 beauville}
		G=\langle a,b \mid a^{p^e}=[b,a]^{p^j}=[b,a,a]=[b,a,b]=1, b^{p^i}=[b,a]^{p^k} \rangle,
	\end{equation}
	where $0 \leq k \leq  j \leq i \leq e$ and $e=i+j-k$.
\end{thm}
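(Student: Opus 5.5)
The plan is to work in the free nilpotent class-$2$ setting and reduce any $2$-generator $p$-group $G$ of class $2$ to a quotient of a standard group. First, Proposition~\ref{p=2} rules out $p=2$. To rule out $p=3$, I would use the same Hall--Petrescu type argument used for Beauville $p$-groups: for $p=3$ the group $G/\Phi(G)\cong C_3\times C_3$ has only $4$ maximal subgroups. Any generating triple $(x,y,xy)$ meets at least three of them, since the images of $x,y,xy$ span three distinct lines. Two triples therefore share a maximal subgroup $M$ containing elements of both. The task is then to show that suitable powers coincide in the cyclic subgroups $\langle x\rangle$ and $\langle x_2\rangle$ up to conjugacy. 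Class $2$ with $p$ odd gives $(xy)^n=x^ny^n[y,x]^{\binom n2}$, which makes the power map well behaved, and this should produce a nontrivial common element of $\Sigma(x_1,y_1)\cap\Sigma(x_2,y_2)$. So $p\ge 5$.

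For the presentation, write $G=\langle a,b\rangle$ with $c=[b,a]$ central, so $G'=\langle c\rangle$ is cyclic of order $p^j$. Since $G/G'$ is abelian on two generators, it is $C_{p^{e_1}}\times C_{p^{e_2}}$. Choose the generators $a,b$, via an automorphism of the abelianization lifted to $G$, so that $a$ has maximal order $p^e$ and $\langle a\rangle\cap G'=1$. This is possible because in class $2$ the order of $a$ modulo $G'$ can be arranged to dominate. Then $b^{p^i}\in G'$, say $b^{p^i}=c^{p^k u}$ with $u$ a unit, and after rescaling $c$ we may take $u=1$. The relations $c^{p^j}=1$ and centrality give the stated presentation. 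The order count $|G|=p^{e+i+j}$, together with computing the order of $b$ as $p^{i+j-k}$, gives $0\le k\le j\le i$. The Beauville condition forces $o(b)=o(a)=p^e$, since otherwise the Beauville criterion of \cite{FG} fails because of unequal exponents on the generators. This yields $e=i+j-k$.

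The main obstacle is the normalization step: arranging $\langle a\rangle\cap G'=1$ and the exponent inequality $k\le j\le i$. The last relation $e=i+j-k$ also needs care, because it requires an argument that a Beauville structure forces all three elements of each triple to have order $p^e$. I would first try to prove this by counting in $G/G^{p^{e-1}}$-type quotients, as in \cite{gul}.
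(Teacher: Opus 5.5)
You have a genuine gap at the normalization step: you bring in the Beauville hypothesis only at the very end, but this step already needs it. (The paper does not prove this statement itself; it quotes it from \cite{gul}, so I judge your sketch on its own terms.) You claim that in any $2$-generator $p$-group of class $2$ one can pick a generator $a$ with $o(a)=p^e=\exp G$ and $\langle a\rangle\cap G'=1$, ``because in class $2$ the order of $a$ modulo $G'$ can be arranged to dominate''. These two conditions together say $\exp(G/G')=\exp G$, which is false for class-$2$ groups in general. In the nonabelian group of order $p^3$ and exponent $p^2$ we have $G'=\Phi(G)$ of order $p$, so $1\neq x^p\in G'$ for every $x$ of order $p^2$. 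As written, your argument would put every $2$-generator class-$2$ $p$-group into the shape \eqref{class 2 beauville}, apart from the equality $e=i+j-k$, and that is wrong. The equality $\exp(G/G')=\exp G$ is a consequence of the Beauville property and has to be derived from it.

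The missing lemma, which you only gesture at in your last sentence, must come first: if $G$ is Beauville, then every element outside $\Phi(G)$ has order $p^e$. Class $2$ with $p$ odd is regular. So if some $x\notin\Phi(G)$ had order less than $p^e$, then $\Omega_{e-1}(G)$ would have index $p$ and all $p^{e-1}$-th powers would lie in one subgroup of order $p$. Since every generating triple contains an element of order $p^e$, any two $\Sigma$'s would then meet.

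Given this lemma, the normalization is short. Lift a basis of $G/G'$ of orders $p^\alpha\geq p^\beta$ to $a,b$, and write $a^{p^\alpha}=c^{up^\rho}$, $b^{p^\beta}=c^{vp^\sigma}$ with $o(c)=p^\gamma\leq p^\beta$. Suppose $\rho<\gamma$. Then $o(a)=o(b)$ forces $\sigma=\rho-(\alpha-\beta)<\gamma$, and $(ab^t)^{p^\alpha}=c^{(u+vt)p^\rho}$, since the commutator factor vanishes for $p$ odd. A suitable $t$ then gives a generator $ab^t$ of order smaller than $o(a)$, a contradiction. Hence $a^{p^\alpha}=1$, $\alpha=e$ and $\langle a\rangle\cap G'=1$.

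After that your remaining steps go through, with two corrections. Normalize the unit by replacing $a$ with $a^u$, since $c=[b,a]$ cannot simply be renamed. Also, $j\leq i$ comes from $c^{p^i}=[b^{p^i},a]=1$, not from the order count.

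The same lemma completes your $p=3$ paragraph, which stops where the work begins. All six elements have order $p^e$, and the two triples share one of the four maximal subgroups. For $x,x'$ in that subgroup you need that $x'$ is conjugate to some $x^rg^p$ with $p\nmid r$; this holds because $x'^G=x'G'$ when $x'\notin\Phi(G)$. Then $(x^rg^p)^{p^{e-1}}=x^{rp^{e-1}}\neq 1$ lies in both $\Sigma$'s.
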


\begin{rmk}
	\label{class 2-types}
	Note that we have three main types of Beauville $p$-groups of nilpotency class $2$.
	The first type is when $k=0$ in \eqref{class 2 beauville}. 
	In this case we have $[b,a]=b^{p^i}$, and hence the group is a non-abelian metacyclic Beauville $p$-group, which has been already examined.
	We will separately deal with the other two cases: $k=j$  or  $0< k < j$.
\end{rmk}

We start with the case $0< k < j$.

\begin{lem}
	\label{class2-case2-center}
Let $p$ be an odd prime and let $G$ be a $p$-group given by the
following presentation:
\begin{equation}
\label{presentation-class2-case2}
G=\langle a,b \mid a^{p^e}=[b,a]^{p^j}=[b,a,a]=[b,a,b]=1, b^{p^i}=[b,a]^{p^k} \rangle,
\end{equation}
where $0<k<j\leq i\leq e$ and $e= i+j-k$.
Then
\[
Z(G)=G^{p^j}G'.
\]
Consequently, we have $|Z(G)|=p^{e+i-j}$.
\end{lem}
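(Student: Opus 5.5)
Throughout, write $c=[b,a]$. Since $G$ has class $2$, $c$ is central, $G'=\langle c\rangle$ has order $p^j$, and for all $x,y\in G$ and integers $m,n$ we have $[x^m,y^n]=[x,y]^{mn}$ and $(xy)^n=x^ny^nc^{\pm\binom{n}{2}}$. Because $p$ is odd, $p\mid\binom{p^j}{2}$, and this will let us compute $p^j$-th powers freely. The proof has three steps: a normal form, the inclusion $G^{p^j}G'\le Z(G)$, and the reverse inclusion; the order of $Z(G)$ then follows by counting in the normal form.

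First I set up the normal form. Every element can be written as $a^\alpha b^\beta c^\gamma$. I read off orders from the relations. We have $o(a)=p^e$. From $b^{p^i}=c^{p^k}$ and $o(c)=p^j$ we get $o(b)=p^{i+j-k}=p^e$. The subgroup $\langle b\rangle\cap\langle c\rangle=\langle c^{p^k}\rangle$ has order $p^{j-k}$. So I take $\gamma$ modulo $p^k$, $\beta$ modulo $p^e$ and $\alpha$ modulo $p^e$. This gives $|G|=p^{2e+k}$, and I would justify it by viewing $G$ as a central extension by $\langle c\rangle$ of $G/G'\cong C_{p^e}\times C_{p^i}$. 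I also need the standard fact that $\langle a\rangle\cap\langle b,c\rangle=1$.

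Second, the inclusion $G^{p^j}G'\le Z(G)$. By bilinearity, $[g^{p^j},h]=[g,h]^{p^j}=1$, and $G'$ is central because the class is $2$. So $G^{p^j}G'\le Z(G)$.

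Third, the reverse inclusion, which I expect to be the main obstacle because it requires care with the exponents. Let $z=a^\alpha b^\beta c^\gamma\in Z(G)$. Then
\[
[z,a]=[b,a]^{\beta}=c^{\beta}, \qquad [z,b]=[a,b]^{\alpha}=c^{-\alpha}.
\]
Both must be trivial, so $p^j\mid\alpha$ and $p^j\mid\beta$. Hence
\[
z\in\langle a^{p^j},b^{p^j}\rangle G'\le G^{p^j}G'.
\]
For the order, I count elements $a^{\alpha}b^{\beta}c^{\gamma}$ with $p^j\mid\alpha,\beta$:
\[
|Z(G)|=\frac{p^e}{p^j}\cdot\frac{p^e}{p^j}\cdot p^k=p^{2e-2j+k}=p^{e+i-j},
\]
where the last equality uses $e=i+j-k$. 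Equivalently, $|Z(G)|=|G|/|G:Z(G)|$ with $G/Z(G)\cong C_{p^j}\times C_{p^j}$, since $G/Z(G)$ is generated by $\bar a,\bar b$ of order exactly $p^j$. The delicate point is that the normal-form count is correct, i.e.\ that no extra collapsing occurs, and this is where the presentation must genuinely be checked.
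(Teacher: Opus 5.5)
Your proposal is correct and is essentially the paper's argument. In both, $G^{p^j}G'\le Z(G)$ follows from bilinearity in class $2$, and the computation $[z,a]=[b,a]^{\beta}$, $[z,b]=[a,b]^{\alpha}$ forces $p^j\mid\alpha$ and $p^j\mid\beta$; you argue this directly where the paper argues by contradiction via an element of order $p$ in $Z(G)/G^{p^j}G'$, and you get the order by counting normal forms rather than via $|G^{p^j}|\,|G'|/|G^{p^j}\cap G'|$.

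The paper also takes the no-collapse point you flag for granted. It is settled by the semidirect product of the abelian group $\langle b,c \mid [b,c]=c^{p^j}=1,\ b^{p^i}=c^{p^k}\rangle$ (of order $p^{i+j}$) by $C_{p^e}=\langle a\rangle$, acting via $b^{a}=bc$, $c^{a}=c$. This is well defined because $j\le i\le e$, it satisfies all the relations, and it has order $p^{e+i+j}=p^{2e+k}$.
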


\begin{proof}
	Since $G' \leq Z(G)$, this implies that $G^{p^j}=\langle a^{p^j}, b^{p^j}\rangle$.
	One can easily check that $G^{p^j} \leq Z(G)$.
	Now suppose , by way of contradiction, that $G^{p^j}G'$ is  a proper subgroup of $Z(G)$.
	Then there is an element of order $p$ in $Z(G)/G^{p^j}G' $.
	This implies that there is $g\in Z(G)$ such that
	\[
	g\equiv a^{mp^{j-1}}b^{np^{j-1}} \pmod{G'},
	\]
	where either $p\nmid m$ or $p \nmid n$. 
	If $p \nmid m$ then 
	\[
	1=[g,b]=[a^{mp^{j-1}}, b]=[a,b]^{mp^{j-1}},
	\]
	which is a contradiction.
	The similar argument holds if $p \nmid n$. Thus, we conclude that $Z(G)=G^{p^j}G'$.
	
	Since $|G^{p^j}|=|\langle a^{p^j} \rangle||\langle b^{p^j} \rangle|=p^{2e-2j}$ and $G^{p^j}\cap G'=\langle a^{p^i}\rangle$ is of order $p^{e-i}$, we have that $|Z(G)|=p^{e+i-j}$.
\end{proof}

We next give the automorphism group of the group with the presentation given in \cref{presentation-class2-case2}. To this purpose, we will state the result in \cite{gul}, which refers to the paper \cite[p.85]{men}.

\begin{pro} {\cite[Proposition 2.9]{gul}}
	\label{automorphism-class 2-case 2}
	Let $p$ be an odd prime and let $G$ be a $p$-group given by the
	following presentation:
	\[
	G=\langle a,b \mid a^{p^e}=[b,a]^{p^j}=[b,a,a]=[b,a,b]=1, b^{p^i}=[b,a]^{p^k} \rangle,
	\]
	where $0<k<j\leq i\leq e$ and $e= i+j-k$.
	Then a map $\theta$ defined on the generators $a$ and $b$ extends to an automorphism of $G$ if and only if
	\[
	\theta(a)=a^{1+mp^{e-i}}b^nc_a \ \ \ \
	\text{and} \ \ \ \
	\theta(b)= a^{rp^{e-i}}b^sc_b,
	\]
	where  $c_a,c_b \in G'$ and $1\leq m,n,r,s\leq p^i$ with $p \nmid s$.
	Consequently, we have
	\[
	|\Aut(G)|=(p-1)p^{4i+2j-1}.\]
\end{pro}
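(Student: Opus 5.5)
The plan is to use von Dyck's theorem together with a normal form for elements of $G$. A map $\theta$ on $\{a,b\}$ extends to an endomorphism exactly when the proposed images satisfy the defining relations. Since $G$ is finite, such an endomorphism is an automorphism if and only if it is surjective. By Burnside's basis theorem this happens if and only if the images generate $G$ modulo $\Phi(G)=G^pG'$.

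\textbf{Normal form and power formula.} First I record the structure of $G$. The subgroup $G'=\langle [b,a]\rangle$ is central and cyclic of order $p^j$. Modulo $G'$ we have $b^{p^i}\in G'$, so $G/G'\cong C_{p^e}\times C_{p^i}$ and $|G|=p^{e+i+j}$. Hence every element can be written uniquely as $a^ub^vc$ with $0\le u<p^e$, $0\le v<p^i$ and $c\in G'$.

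Because $G$ has class $2$ and $p$ is odd, we have
\[
(xy)^n=x^ny^n[y,x]^{\binom n2}\quad\text{and}\quad [x^\alpha y^\beta,\,x^\gamma y^\delta]=[y,x]^{\beta\gamma-\alpha\delta}.
\]
From these, $G$ has exponent $p^e$. Here we use $j\le e$, and that $b$ has order $p^{i+j-k}=p^e$. Consequently, for arbitrary images $x=\theta(a)$ and $y=\theta(b)$, the relations $x^{p^e}=1$, $[y,x]^{p^j}=1$ and $[y,x,x]=[y,x,y]=1$ hold automatically. The only relation that imposes a condition is $y^{p^i}=[y,x]^{p^k}$.

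\textbf{The key relation.} Write $x=a^{\alpha}b^{\beta}c_a$ and $y=a^{\gamma}b^{\delta}c_b$ with $c_a,c_b\in G'$. Then $[y,x]=[b,a]^{\delta\alpha-\gamma\beta}$, possibly up to a sign convention that I will fix when doing the computation. For the power, the binomial correction term is a power of $[b,a]^{p^i}$, which is trivial because $i\ge j$. Using $b^{p^i}=[b,a]^{p^k}$ we get
\[
y^{p^i}=a^{\gamma p^i}\,[b,a]^{\delta p^k}.
\]
By uniqueness of the normal form, the relation $y^{p^i}=[y,x]^{p^k}$ forces $a^{\gamma p^i}\in G'$. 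I will check that this means $p^{e-i}\mid\gamma$, which is consistent with $a$ having order $p^e$ modulo $G'$.

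Substituting $\gamma=rp^{e-i}$, the remaining condition reads
\[
\delta p^k\equiv p^k(\delta\alpha-\gamma\beta)\pmod{p^j}.
\]
Since $e-i=j-k$, the term $\gamma\beta p^k$ vanishes modulo $p^j$. So the condition becomes $\delta(\alpha-1)\equiv 0\pmod{p^{j-k}}$.

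\textbf{Generation condition.} Modulo $\Phi(G)$ the images are $\bar x=(\alpha,\beta)$ and $\bar y=(\gamma,\delta)$, with $\gamma\equiv 0\pmod p$. So generation is equivalent to $p\nmid\alpha$ and $p\nmid\delta$. Then the congruence above yields $\alpha=1+mp^{e-i}$, which in particular implies $p\nmid\alpha$. This proves the stated characterization, with $n=\beta$ and $s=\delta$.

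\textbf{Counting.} Count the admissible parameters, using the normal form ranges:
\begin{itemize}
\item $\alpha$ modulo $p^e$ with $\alpha\equiv1\pmod{p^{e-i}}$: $p^i$ choices;
\item $\beta$ modulo $p^i$: $p^i$ choices;
\item $\gamma\equiv0\pmod{p^{e-i}}$ modulo $p^e$: $p^i$ choices;
\item $\delta$ a unit modulo $p^i$: $(p-1)p^{i-1}$ choices;
\item $c_a,c_b\in G'$: $p^{j}$ choices each.
\end{itemize}
Multiplying gives $|\Aut(G)|=(p-1)p^{4i+2j-1}$.

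\textbf{Main obstacle.} The delicate step is the exact bookkeeping in the power and commutator formulas. This includes the signs in $[y,x]$ and checking that all binomial correction terms really vanish. It also includes justifying uniqueness of the normal form, in particular that $\langle a\rangle\cap G'$ behaves as claimed, so that $a^{\gamma p^i}\in G'$ forces $p^{e-i}\mid\gamma$. I would settle this first by computing $|G|$ via a central extension of $C_{p^e}\times C_{p^i}$ by $C_{p^j}$.
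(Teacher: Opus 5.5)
Your argument is correct, and the computations check out. With $x=a^{\alpha}b^{\beta}c_a$ and $y=a^{\gamma}b^{\delta}c_b$ one gets $[y,x]=[b,a]^{\delta\alpha-\gamma\beta}$ with exactly that sign. One also gets $y^{p^i}=a^{\gamma p^i}[b,a]^{\delta p^k}$; note that dropping $c_b^{p^i}$ also uses $j\le i$. The final count is $(p-1)p^{4i+2j-1}$.

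The paper does not prove this proposition itself. It imports it from \cite[Proposition 2.9]{gul}, which in turn refers to the description of automorphisms in \cite[p.~85]{men}. Your route (von Dyck plus the Burnside basis theorem, with a normal form) is therefore a self-contained replacement for that citation.

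What each approach buys:
\begin{itemize}
\item The citation treats all the two-generator class-$2$ families uniformly. The paper also needs the case $k=j$ for Theorem A, likewise taken from \cite{men}.
\item Your computation shows exactly where each hypothesis enters. Oddness of $p$ and $j\le i$ kill $c_b^{p^i}$ and the binomial correction. The condition $e-i=j-k>0$ does two jobs: it forces $p\mid\gamma$, so generation reduces to $p\nmid\alpha\delta$, and it kills $\gamma\beta p^k$ modulo $p^j$, so the only relation becomes $\alpha\equiv 1\pmod{p^{e-i}}$.
\end{itemize}

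For the step you flag as the main obstacle, a semidirect product is quicker than a cocycle computation. Let $N=\langle b,c\mid [b,c],\ c^{p^j},\ b^{p^i}c^{-p^k}\rangle$. This is abelian of order $p^{i+j}$, with $c$ of order $p^j$ and $b$ of order $p^{i+j-k}=p^e$. Let $a$ of order $p^e$ act by $b\mapsto bc$ and $c\mapsto c$.
\begin{itemize}
\item This map respects $b^{p^i}=c^{p^k}$ precisely because $c^{p^i}=1$, that is, $j\le i$.
\item It has order $p^j$, which divides $p^e$, so the action of $\langle a\rangle$ is well defined.
\item The resulting group satisfies the defining relations with $[b,a]=c$, is generated by $a$ and $b$, and has order $p^{e+i+j}$.
\item The presented group has at most $p^{e+i+j}$ elements, all of the form $a^ub^vc^w$.
\end{itemize}
Hence the two groups coincide, $G'=\langle c\rangle\cong C_{p^j}$, and $\langle a\rangle\cap G'=1$. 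This is exactly the uniqueness you need to deduce $p^{e-i}\mid\gamma$ from $a^{\gamma p^i}\in G'$.
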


Now one can easily observe the following lemma.

\begin{lem}	
	\label{class2-case2-inner}
	Let $p$ be an odd prime and let $G$ be a $p$-group given by the
	following presentation:
	\[
	G=\langle a,b \mid a^{p^e}=[b,a]^{p^j}=[b,a,a]=[b,a,b]=1, b^{p^i}=[b,a]^{p^k} \rangle,
	\]
	where $0<k<j\leq i\leq e$ and $e= i+j-k$.
	Then a map $\theta$ defined on the generators $a$ and $b$ extends to an inner automorphism of $G$ if and only if
	\[
	\theta(a)= ac_a\ \ \ \
	\text{and} \ \ \ \
	\theta(b)= bc_b,
	\]
	for any $c_a, c_b \in G'$. 
\end{lem}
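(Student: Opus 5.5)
We need to prove Lemma \ref{class2-case2-inner}: inner automorphisms are exactly the maps $a\mapsto ac_a$, $b\mapsto bc_b$ with $c_a,c_b\in G'$ arbitrary. The plan is to prove one inclusion directly and get the other by counting.

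\textbf{Inner automorphisms have this form.} Since $G$ has class $2$, for any $g\in G$ we have $a^g=a[a,g]$ and $b^g=b[b,g]$ with $[a,g],[b,g]\in G'$. So every inner automorphism has the stated form.

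\textbf{Counting the maps of this form.} By Proposition \ref{automorphism-class 2-case 2} (take $m=n=r=0$ and $s=1$), every such map extends to an automorphism. Distinct pairs $(c_a,c_b)$ give distinct maps, so there are exactly $|G'|^2$ of them. Here $G'=\langle [b,a]\rangle$ is cyclic of order $p^j$, since $[b,a]^{p^j}=1$ and the order is exactly $p^j$ by the presentation. This order is implicit in the count $|\Aut(G)|=(p-1)p^{4i+2j-1}$, and it is also consistent with $G^{p^j}\cap G'$ having order $p^{e-i}$ in Lemma \ref{class2-case2-center}. Hence there are $p^{2j}$ such maps.

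\textbf{Counting inner automorphisms.} We have $|\Inn(G)|=|G:Z(G)|$. The order $|G|$ is $p^{e}\cdot p^{i}\cdot p^{j}$: the normal form is $a^x b^y c^z$ with $0\le x<p^e$, $0\le y<p^i$, $0\le z<p^j$, where $c=[b,a]$, using $b^{p^i}\in G'$. Lemma \ref{class2-case2-center} gives $|Z(G)|=p^{e+i-j}$. Therefore $|\Inn(G)|=p^{e+i+j}/p^{e+i-j}=p^{2j}$.

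\textbf{Conclusion.} The inner automorphisms form a subset of the $p^{2j}$ maps of the stated form, and $|\Inn(G)|=p^{2j}$. So the two sets coincide, and every map of the stated form is inner.

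The only delicate point is checking $|G|=p^{e+i+j}$, i.e.\ that the normal form gives no collapse. I would verify this by realising $G$ as a central extension of the cyclic group $C_{p^j}$ by an abelian group, or simply take it as implicit in Proposition \ref{automorphism-class 2-case 2} and Lemma \ref{class2-case2-center}.

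As an alternative to counting, one can show directly that $c\mapsto [a,c]$ ranges over all of $G'$ as $c$ varies (take powers of $b$), and likewise for $b$ (powers of $a$). Writing $g=a^xb^y$, we get $[a,g]=[a,b]^y$ and $[b,g]=[b,a]^x$. Since $y$ and $x$ can be chosen independently modulo $p^j$, which is possible because $j\le i\le e$, every pair $(c_a,c_b)$ is realised by some conjugation. This direct argument is cleaner, and I would present it as the main proof.
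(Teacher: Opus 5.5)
Your proposal is correct. The paper gives no argument beyond ``one can easily observe'', and your direct argument is exactly the intended observation: in class $2$ one has $a^g=a[a,g]$ and $b^g=b[b,g]$, and conjugation by $g=a^xb^y$ gives $c_a=[a,b]^y$ and $c_b=[b,a]^x$ with $x,y$ arbitrary integers, so every pair in $G'\times G'$ occurs. The appeal to $j\le i\le e$ there is not needed.

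Your counting variant is also valid: $|G:Z(G)|=p^{e+i+j}/p^{e+i-j}=p^{2j}$, which matches the value $|\Inn(G)|=p^{2j}$ the paper uses later in the proof of Theorem B. However, it relies on $|G|=p^{e+i+j}$ and on Lemma \ref{class2-case2-center}, and the direct argument avoids both.
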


We next continue with another lemma which is crucial for the proof of Theorem B.

\begin{thm}
	\label{class2-case2-elements of Out(G)}
	Let $p$ be an odd prime and let $G$ be a $p$-group given by the
	following presentation:
	\[
	G=\langle a,b \mid a^{p^e}=[b,a]^{p^j}=[b,a,a]=[b,a,b]=1, b^{p^i}=[b,a]^{p^k} \rangle,
	\]
	where $0<k<j\leq i\leq e$ and $e= i+j-k$.
	Then there are  $p^{2i-j}$ involutions in $\Out(G)$.
\end{thm}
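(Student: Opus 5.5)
We follow the strategy of the proof of \cref{powerful-elements of Out(G)}. Let $\alpha$ be an automorphism whose image in $\Out(G)$ is an involution. Since $\Aut_{\Phi(G)}(G)$ is a $p$-group containing $\Inn(G)$ and $p$ is odd, $\alpha\notin\Aut_{\Phi(G)}(G)$, so $\alpha$ has order $2$ modulo $\Aut_{\Phi(G)}(G)$. By \cref{automorphism-class 2-case 2}, every automorphism sends $a\mapsto a^{1+mp^{e-i}}b^nc_a$ and $b\mapsto a^{rp^{e-i}}b^sc_b$. Since $e-i=j-k>0$, the induced matrix on $G/\Phi(G)\cong C_p\times C_p$ is
\[
\left(\begin{array}{cc} 1 & n\\ 0 & s\end{array}\right), \qquad p\nmid s ,
\]
written with respect to the basis $\bar a,\bar b$, in the same convention as before. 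Thus $Q=\Aut(G)/\Aut_{\Phi(G)}(G)$ has order $p(p-1)$ and is isomorphic to the group $A$ of \cref{powerful-elements of Out(G)} (with the roles of the two generators exchanged). Its involutions are exactly the $p$ elements with $s\equiv -1$ and $n\equiv\lambda$ for $\lambda\in\F_p$. For each $\lambda$ fix a lift $\alpha_\lambda$, for instance $a\mapsto ab^{\lambda}$, $b\mapsto b^{-1}$; we must check that this defines an automorphism. Then every $\alpha$ as above lies in $\langle\alpha_\lambda\rangle\ltimes\Aut_{\Phi(G)}(G)$ for a unique $\lambda$.

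Put $H=\langle\overline{\alpha_\lambda}\rangle\ltimes\overline{\Aut_{\Phi(G)}(G)}\le\Out(G)$. The order of $\overline{\Aut_{\Phi(G)}(G)}$ is $|\Aut(G)|/\big((p-1)p\,|\Inn(G)|\big)$. By \cref{class2-case2-inner}, $|\Inn(G)|=|G'|^2=p^{2j}$. Hence
\[
\bigl|\overline{\Aut_{\Phi(G)}(G)}\bigr| = p^{4i+2j-1}/(p\cdot p^{2j})=p^{4i-2},\qquad |H|=2p^{4i-2}.
\]
Since $\langle\overline{\alpha_\lambda}\rangle$ is a Sylow $2$-subgroup of $H$ of order $2$, the number of involutions in $H$ equals $|H:C_H(\overline{\alpha_\lambda})|$. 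The cosets $H$ for distinct $\lambda$ intersect only in $\overline{\Aut_{\Phi(G)}(G)}$, which has odd order, so the total count is $p\cdot|H:C_H(\overline{\alpha_\lambda})|$. The goal is therefore $|C_H(\overline{\alpha_\lambda})|=2p^{2i+j-1}$, which gives $p^{2i-j-1}$ involutions for each $\lambda$ and $p^{2i-j}$ in total.

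The main step is computing $C:=\overline{\Aut_{\Phi(G)}(G)}\cap C_H(\overline{\alpha_\lambda})$. Take $\beta\in\Aut_{\Phi(G)}(G)$ with $\beta(a)=a^{1+mp^{e-i}}b^{n}c_a$, $\beta(b)=a^{rp^{e-i}}b^{s}c_b$, where $p\mid n$ and $s\equiv1\pmod p$. We work modulo $\Inn(G)$, i.e. modulo $G'$ on the images of generators, by \cref{class2-case2-inner}. So $\beta$ is determined by $(m,n,r,s)$ taken modulo the relations imposed by $G/G'$, and $\overline{\Aut_{\Phi(G)}(G)}$ corresponds to $p^{4i-2}$ parameter choices.

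The condition $\alpha_\lambda\beta\equiv\beta\alpha_\lambda$ modulo $\Inn(G)$ becomes a congruence system in the abelian group $G/G'=\langle\bar a\rangle\times\langle\bar b\rangle$. Here $\bar a$ has order $p^{e}$ modulo $G'$, or rather order $p^{i}$ in the relevant part, and $\bar b$ has order $p^{k}$ modulo $G'$; since $G'\ni b^{p^i}$, we must track the orders carefully. I would compute both composites on $a$ and $b$ and compare exponents of $\bar a$ and $\bar b$. I expect the $b$-coordinate to force a linear condition such as $2\cdot(\text{something})\equiv\lambda(\cdot)$, which fixes one parameter given the others. I also expect the mismatch between the $a$- and $b$-orders in $G/G'$ (governed by $j$ and $k$, with $e-i=j-k$) to cut the free parameters so that exactly $p^{2i+j-1}$ classes remain.

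Getting this exponent right, in particular handling $b^{p^i}=[b,a]^{p^k}\in G'$, which makes $b$ have smaller order modulo $G'$ than modulo $\Phi(G)$-type considerations suggest, is the delicate part. I would first verify the count against small cases, e.g. $i=j=2$, $k=1$, $e=3$, before finalizing.
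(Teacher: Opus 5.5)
Your reduction is the same as the paper's: lift the $p$ involutions of $Q$, pass to $H=\langle\overline{\alpha_\lambda}\rangle\ltimes\overline{\Aut_{\Phi(G)}(G)}$ with $|H|=2p^{4i-2}$, count the conjugates of $\overline{\alpha_\lambda}$, and multiply by $p$. The gap is the decisive step. You fix the target $|C_H(\overline{\alpha_\lambda})|=2p^{2i+j-1}$ and expect the computation in $G/G'$ to produce it, but that computation gives a different value.

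By \cref{class2-case2-inner}, $\Inn(G)$ is exactly the kernel of $\Aut(G)\to\Aut(G/G')$, so $\Out(G)$ embeds in $\Aut(G/G')$, where $G/G'\cong C_{p^e}\times C_{p^i}$. (So $\bar b$ has order $p^i$ modulo $G'$, not $p^k$.) Write $G/G'$ additively, with $\alpha_\lambda(\bar a)=\bar a+\lambda\bar b$, $\alpha_\lambda(\bar b)=-\bar b$, and $\beta(\bar a)=(1+mp^{e-i})\bar a+n\bar b$, $\beta(\bar b)=rp^{e-i}\bar a+s\bar b$, with $m,n,r,s$ taken modulo $p^i$.
\begin{itemize}
\item Comparing the $\bar a$-coordinates of $\alpha_\lambda\beta(\bar b)$ and $\beta\alpha_\lambda(\bar b)$ gives $2rp^{e-i}\equiv 0\pmod{p^e}$, i.e.\ $r\equiv 0\pmod{p^i}$.
\item Comparing $\alpha_\lambda\beta(\bar a)$ and $\beta\alpha_\lambda(\bar a)$ then gives $2n\equiv\lambda(1+mp^{e-i}-s)\pmod{p^i}$, which determines $n$.
\end{itemize}
So $m$ and $s\equiv 1\pmod p$ are the only free parameters, and $|C_H(\overline{\alpha_\lambda})|=2p^{2i-1}$. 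This gives $p^{2i-1}$ involutions in each $H$ and $p^{2i}$ in total, exactly as in \cref{powerful-elements of Out(G)}; $j$ never enters.

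A one-line check: take $\alpha_0$ ($a\mapsto a$, $b\mapsto b^{-1}$). Then $C_{\Out(G)}(\overline{\alpha_0})$ consists of the classes with $n\equiv r\equiv 0\pmod{p^i}$ and has order $(p-1)p^{2i-1}$. Hence the conjugacy class of $\overline{\alpha_0}$ alone has $(p-1)p^{4i-1}/\big((p-1)p^{2i-1}\big)=p^{2i}>p^{2i-j}$ elements. In your proposed test case $i=j=2$, $k=1$, $e=3$ you would find $p^4$ involutions, not $p^2$.

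So no completion of your plan can prove the statement as written; the count should be $p^{2i}$. The paper reaches $p^{2i+j-1}$ because it counts the $G'$-exponents $u,v$ in $\beta(a)=a^{1+mp^{e-i}}b^{pt}[b,a]^u$ and $\beta(b)=b^{1+kp}[b,a]^v$ as giving distinct classes $\overline\beta$. Those components are precisely what $\Inn(G)$ absorbs, which is the same quotient used to obtain $|H|=2p^{4i-2}$. (Feeding $p^{2i}$ into the proof of Theorem B turns its second term into $6p^{2e-3}(p-1)(p-3)(p-5)$, as in Theorem C.)
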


\begin{proof}
	We will follow the same notations as in \cref{powerful-elements of Out(G)}.
	This leads us to the same conclusion, that is, we have to determine $|H: C_H(\overline{\alpha_{\lambda}})|$. 
	
	Let $\overline{\beta} \in \overline{ \Aut_{\Phi(G)}(G)} \cap C_H(\overline{\alpha_{\lambda}})$. 
	Then $\alpha_{\lambda}\beta= \iota_g \beta\alpha_{\lambda}$ for some $g\in G$. 
	Thus
	\[
	\alpha_{\lambda}(\beta(b))= ({\beta(b)}^{-1})^{g}.
	\]
	This equality, together with the fact that  $\beta \in \Aut_{\Phi(G)}(G)$, implies that
	\begin{align*} 
		\beta \colon a & \longmapsto a^{1+m p^{e-i}}b^{pt}[b,a]^u \\
		b & \longmapsto b^{1+kp}[b,a]^v.
	\end{align*}

	Now, as $\alpha_{\lambda}\beta(b)= b^{-1-kp}[b,a]^{-v}$, and 
	\[
	({\beta(b)}^{-1})^{g}= (b^{-1-kp})^g[b,a]^{-v},
	\]
	we get $g\in \langle b \rangle Z(G)$.
	
	We next consider the equality $\alpha_{\lambda}\beta (a)=\iota_g \beta\alpha_{\lambda}(a)$ where
	\begin{align*} 
		\alpha_{\lambda}\beta (a)=a^{1+m p^{e-i}}b^{\lambda(1+m p^{e-i})-tp}[b,a]^{\lambda mp^{e-i}+\lambda\binom{mp^{e-i}}{2}-u},
	\end{align*}
	and 
	\[
	\iota_g \beta\alpha_{\lambda}(a)=a^{1+m p^{e-i}}b^{\lambda(1+kp)+tp}
	[a,g]^{1+m p^{e-i}}[b,a]^{u+\lambda v}.
	\]
	Then we get
	\[
	2t \equiv \lambda(m p^{e-i-1}-k) \pmod{p^{i-1}}. 
	\]
	Thus $t$ is determined by $m$ and $k$. 
	Also observe that
	\[
	[a,b]^{\lambda v+2u}=[a,b]^{s},
	\]
	for some $s$. 
	Hence
	\[
	2u\equiv s-\lambda v \pmod{p^j}.
	\]
	This means that, once $v$ is chosen, $u$ is uniquely determined.
	
	As a result,  there are $p^{2i+j-1}$ possibilities for $\overline{\beta}$. This implies that $|C_H(\overline{\alpha_{\lambda}})|=2p^{2i+j-1}$, and since $|H|=2p^{4i-2}$, we have
	\[
	|H: C_H(\overline{\alpha_{\lambda}})|=p^{2i-j-1}
	\]
	which is the number of involutions in $H$.
	Since the result holds for any $\lambda \in \F_p$, the number of involutions in $\Out(G)$ is $p^{2i-j}$. This completes the proof. 
\end{proof}

\vspace{15pt}
In the last part, we will deal with the case $k=j$ in  \eqref{class 2 beauville}.

As in the previous case, we have the following easy lemma where the proof is left to the reader.

\begin{lem}
	\label{class2-case3-center}
	Let $p$ be an odd prime and let $G$ be a $p$-group given by the
	following presentation:
	\begin{equation}
		\label{presentation-class2-case3}
		G=\langle a,b \mid a^{p^e}=b^{p^e}=[b,a]^{p^j}=[b,a,b]=[b,a,a]=1 \rangle,
	\end{equation}
	where $0<j\leq e$.
	Then
	\[
	Z(G)=G^{p^j}G'.
	\]
	Consequently, we have $|Z(G)|=p^{2e-j}$.
	
\end{lem}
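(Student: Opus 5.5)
I will prove the two containments $G^{p^j}G'\le Z(G)$ and $Z(G)\le G^{p^j}G'$ separately, following the argument of \cref{class2-case2-center}, and then compute the order of $G^{p^j}G'$ directly.

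\emph{Step 1: $G^{p^j}G'\le Z(G)$.} The relations $[b,a,a]=[b,a,b]=1$ show that $G'=\langle [b,a]\rangle$ is central, so $G$ has class $2$ and $G'$ is cyclic of order $p^j$. In class $2$ the commutator map is bilinear, so $[a^{p^j},b]=[a,b]^{p^j}=1$ and likewise $[b^{p^j},a]=1$. Hence $a^{p^j},b^{p^j}\in Z(G)$. Since $p$ is odd and $G'$ is cyclic, $G$ is regular, and $(xy)^{p^j}=x^{p^j}y^{p^j}[y,x]^{\binom{p^j}{2}}=x^{p^j}y^{p^j}$. It follows that $G^{p^j}=\langle a^{p^j},b^{p^j}\rangle$, and this subgroup is central.

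\emph{Step 2: $Z(G)\le G^{p^j}G'$.} Suppose the inclusion is proper. Then the $p$-group $Z(G)/G^{p^j}G'$ contains an element of order $p$. Every element of $G$ can be written as $a^xb^yc$ with $c\in G'$. In $G/G^{p^j}G'\cong C_{p^j}\times C_{p^j}$, an element of order $p$ has the form $a^{mp^{j-1}}b^{np^{j-1}}$ modulo $G^{p^j}G'$, with $p\nmid m$ or $p\nmid n$. This is where we need $j\le e$, so that $a$ and $b$ have order at least $p^j$ modulo $G'$. So there is a central $g\equiv a^{mp^{j-1}}b^{np^{j-1}}\pmod{G^{p^j}G'}$. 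The factor from $G^{p^j}G'$ is central, so we may remove it. If $p\nmid m$, then
\[
1=[g,b]=[a,b]^{mp^{j-1}}\neq 1,
\]
since $[a,b]$ has order exactly $p^j$. If $p\nmid n$, use $[g,a]$ symmetrically. Either way we get a contradiction.

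\emph{Step 3: the order.} We have $|G^{p^j}|=|\langle a^{p^j}\rangle||\langle b^{p^j}\rangle|=p^{2(e-j)}$. For this I need $\langle a\rangle\cap\langle b\rangle$ modulo $G'$ to behave, and $G^{p^j}\cap G'=1$. Both follow from the standard structure $|G|=p^{2e+j}$, with normal form $a^xb^y[b,a]^z$, $0\le x,y<p^e$, $0\le z<p^j$. I would verify this normal form by building $G$ as a central extension of $C_{p^e}\times C_{p^e}$ by $C_{p^j}$, using the alternating bilinear cocycle. Then $|Z(G)|=p^{2e-2j}\cdot p^j=p^{2e-j}$.

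The main obstacle is Step 3: confirming $|G|=p^{2e+j}$, that is, that the presentation does not collapse. The explicit central extension settles this.
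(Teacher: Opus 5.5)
Your proof is correct and matches what the paper intends: the paper leaves this lemma to the reader, pointing to the argument of \cref{class2-case2-center}. That argument has the same three parts as your Steps 1--3: a central $G^{p^j}=\langle a^{p^j},b^{p^j}\rangle$, an order-$p$ element of $Z(G)/G^{p^j}G'$ forcing the nontrivial commutator $[a,b]^{mp^{j-1}}=1$, and the count $|G^{p^j}|\,|G'|$ with $G^{p^j}\cap G'=1$. Two details the paper takes for granted are handled more carefully in your version: you work modulo $G^{p^j}G'$ rather than $G'$, and you confirm $|G|=p^{2e+j}$ through an explicit central extension.
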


We close this section by giving the automorphism group of the group with the presentation given in \cref{presentation-class2-case3}.

\begin{pro} {\cite[p.84]{men}}
	\label{automorphism-class 2-case 3}
	Let $p$ be an odd prime and let $G$ be a $p$-group given by the
	following presentation:
	\begin{equation*}
		G=\langle a,b \mid a^{p^e}=b^{p^e}=[b,a]^{p^j}=[b,a,b]=[b,a,a]=1 \rangle,
	\end{equation*}
	where $0<j\leq e$.
	Then a map $\theta$ defined on the generators $a$ and $b$ extends to an automorphism of $G$ if and only if
	\[
	\theta(a)=a^rb^mc_a \ \ \ \
	\text{and} \ \ \ \
	\theta(b)= a^sb^nc_b,
	\]
	where
	$ \left(\begin{array}{cc}
		r & m\\
		s & n \end{array}\right) \in \GL(2,p)$, and $c_a, c_b \in \Phi(G)$.
	Consequently, we have $|\Aut(G)|=(p+1)(p-1)^2p^{4e+2j-3}$.
\end{pro}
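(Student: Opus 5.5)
The plan is to use von Dyck's theorem together with the Burnside basis theorem. Since $G$ is finite, an endomorphism of $G$ is an automorphism exactly when it is surjective. An endomorphism is determined by the images of $a$ and $b$, and it is surjective exactly when those images generate $G$ modulo $\Phi(G)$. So I will (i) show that every assignment $a\mapsto x$, $b\mapsto y$ with $x,y\in G$ extends to an endomorphism, (ii) read off surjectivity from the images in $G/\Phi(G)\cong C_p\times C_p$, and (iii) count.

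For (i), I check that any pair $x,y\in G$ satisfies the defining relations of \eqref{presentation-class2-case3}.
\begin{itemize}
\item Since $G$ has class $2$, $[y,x]\in G'\le Z(G)$, so $[y,x,y]=[y,x,x]=1$.
\item Bilinearity of commutators in class $2$ gives $G'=\langle [b,a]\rangle$, which has exponent $p^j$. Hence $[y,x]^{p^j}=1$.
\item For the power relations I use the class-$2$ identity $(uv)^{n}=u^{n}v^{n}[v,u]^{\binom{n}{2}}$. Write $x=a^{\alpha}b^{\beta}c$ with $c\in G'$ and take $n=p^e$. Because $p$ is odd and $j\le e$, $p^j$ divides $\binom{p^e}{2}$. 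Therefore $x^{p^e}=a^{\alpha p^e}b^{\beta p^e}c^{p^e}=1$, and likewise $y^{p^e}=1$.
\end{itemize}
Thus $G$ is relatively free in its variety and every map on generators extends to an endomorphism.

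For (ii), note that $\Phi(G)=G^pG'$. Writing $x\equiv a^rb^m$ and $y\equiv a^sb^n \pmod{\Phi(G)}$, the endomorphism is surjective if and only if $\bar x,\bar y$ form a basis of $G/\Phi(G)$, that is, $\left(\begin{array}{cc} r & m\\ s & n\end{array}\right)\in\GL(2,p)$. This gives exactly the stated description: $\theta(a)=a^rb^mc_a$ and $\theta(b)=a^sb^nc_b$ with $c_a,c_b\in\Phi(G)$, where $r,m,s,n$ are taken as representatives modulo $p$.

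For (iii), I need $|G|$. I will establish the normal form $a^{x}b^{y}[b,a]^{z}$ with $0\le x,y<p^e$ and $0\le z<p^j$, which gives $|G|=p^{2e+j}$. The upper bound follows from collection. For the lower bound, I realise the group on these triples with the class-$2$ (Heisenberg-type) multiplication modulo $(p^e,p^e,p^j)$; this is well defined precisely because $j\le e$. Then $|\Phi(G)|=p^{2e+j-2}$. The automorphisms are in bijection with pairs $(x,y)$ whose images mod $\Phi(G)$ form a basis, so
\[
|\Aut(G)|=|\GL(2,p)|\,|\Phi(G)|^2=(p^2-1)(p^2-p)p^{4e+2j-4}=(p+1)(p-1)^2p^{4e+2j-3}.
\]
The main obstacle is the careful verification that arbitrary images satisfy $x^{p^e}=1$, which uses $p$ odd and $j\le e$, and the exact order $|G|=p^{2e+j}$.
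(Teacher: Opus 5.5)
Your proof is correct and complete. The paper gives no proof of its own here: the proposition is quoted from Menegazzo \cite[p.84]{men}, which also underlies \cref{automorphism-powerful} and \cref{automorphism-class 2-case 2} and treats a whole family of two-generator $p$-groups. So the comparison is with a citation, not with an internal argument.

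Your route is self-contained and elementary. The key observation is that in the case $k=j$ the group is relatively free on $a,b$: every pair $(x,y)$ satisfies the defining relations. This holds because $G'=\langle [b,a]\rangle\le Z(G)$ has exponent $p^j$, and $p^j\mid\binom{p^e}{2}$ since $p$ is odd and $j\le e$. Von Dyck's theorem then turns every assignment into an endomorphism, and the Burnside basis theorem reduces bijectivity to invertibility of the induced matrix modulo $p$. Your explicit Heisenberg-type model on triples modulo $(p^e,p^e,p^j)$ pins down $|G|=p^{2e+j}$, hence $|\Phi(G)|=p^{2e+j-2}$, which is the value the paper later uses in the proof of Theorem A.

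What your approach buys is brevity and transparency, with the counting immediate once every generating pair is known to give an automorphism. What it does not give is any handle on the case $0<k<j$. There the relation $b^{p^i}=[b,a]^{p^k}$ is not satisfied by arbitrary pairs. Already the swap $a\mapsto b$, $b\mapsto a$ fails: $e=i+j-k>i$ forces $a^{p^i}\notin G'$. So a finer analysis, of the kind in \cite{men}, is needed to obtain \cref{automorphism-class 2-case 2}.
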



\section{Determining All Beauville Structures for a Beauville Group}
\label{all B.S.}
A Beauville group can have many Beauville structures. However, there is no general method for determining all Beauville structures for a Beauville group.

Let us first focus on the abelian Beauville $p$-group $C_p\times C_p$ for a prime $p\geq 5$.

\begin{lem}
	\label{the number of B.S-abelian}
	Let $p\geq 5$ be a prime and let $G\cong C_p\times C_p$. Then the number of all Beauville structures for $G$ is
	\[
	|\U(G)|=(p+1)p(p-1)^3(p-2)(p-3)(p-4).
	\]
\end{lem}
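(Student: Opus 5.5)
The plan is to count ordered pairs of triples $(T_1,T_2)$ directly. Since $G\cong C_p\times C_p$ is abelian, conjugation is trivial, so $\Sigma(x,y)=\langle x\rangle\cup\langle y\rangle\cup\langle xy\rangle$. Moreover, every nontrivial element of $G$ has order $p$, so each cyclic subgroup $\langle g\rangle$ with $g\neq 1$ is one of the $p+1$ lines of $G$, regarded as the two-dimensional vector space $\F_p^2$. For a generating pair $\{x,y\}$, the elements $x$, $y$ and $xy$ are linearly independent in pairs, so they span three distinct lines. The condition $\Sigma(x_1,y_1)\cap\Sigma(x_2,y_2)=1$ therefore says exactly that the three lines determined by $T_1$ are disjoint from the three lines determined by $T_2$.

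The count then splits into two steps.

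\textbf{First triple.} I choose $(x_1,y_1)$ as an ordered basis of $\F_p^2$. There are $|\GL(2,p)|=(p^2-1)(p^2-p)=(p+1)p(p-1)^2$ such choices.

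\textbf{Second triple.} I fix $T_1$. Its three lines $L_1=\langle x_1\rangle$, $L_2=\langle y_1\rangle$, $L_3=\langle x_1y_1\rangle$ are excluded, which leaves $p-2$ admissible lines.
\begin{itemize}
\item I pick the line of $x_2$ among these $p-2$ lines, and then $x_2$ itself in $p-1$ ways.
\item I pick the line of $y_2$ among the remaining $p-3$ lines, and then $y_2$ in $p-1$ ways.
\end{itemize}
The line $\langle x_2y_2\rangle$ is automatically different from $\langle x_2\rangle$ and $\langle y_2\rangle$, but it must also avoid $L_1$, $L_2$, $L_3$.

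This last condition is the main point to check. Fix the lines $M=\langle x_2\rangle$ and $N=\langle y_2\rangle$, and fix $x_2$. As $y_2$ runs over the $p-1$ nonzero vectors of $N$, the sum $x_2+y_2$ (written additively) runs injectively over the $p-1$ lines different from $M$ and $N$. Hence each such line is hit exactly once. Three of these lines are $L_1,L_2,L_3$, since these are distinct from $M$ and $N$. So exactly $(p-1)-3=p-4$ choices of $y_2$ are admissible.

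Combining the two steps, the number of choices for $T_2$ given $T_1$ is
\[
(p-2)(p-1)\cdot(p-3)\cdot(p-4).
\]

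Multiplying by the count for $T_1$ gives
\[
|\U(G)|=(p+1)p(p-1)^2\cdot(p-1)(p-2)(p-3)(p-4)=(p+1)p(p-1)^3(p-2)(p-3)(p-4),
\]
which is the claimed formula. The only potential obstacle is the bijection between the choices of $y_2$ and the lines distinct from $M$ and $N$. It holds because, for $y_2\in N$, the line $\langle x_2+y_2\rangle$ determines $y_2$ uniquely: it meets the affine line $x_2+N$ in exactly one point.
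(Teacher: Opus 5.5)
Your proof is correct and follows the same route as the paper. Both reduce to the six elements of $T_1\cup T_2$ lying on six distinct lines, count $(p+1)p(p-1)^2$ choices for the first triple, and then count the second triple with the first one fixed. The one difference is that the paper cites Lemma 3.21 of \cite{GP} for the count $(p-1)(p-2)(p-3)(p-4)$ of admissible exponents $(k,\ell,m,n)$. You prove this count directly, using the fact that each line other than $M$ and $N$ meets $x_2+N$ in exactly one point, which makes the argument self-contained.
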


\begin{proof}
It is clear that a pair of generating triples $t=(T_1, T_2)$ is a Beauville structures for $G$ if and only if all of the elements in $T_1\cup T_2$ fall into distinct maximal subgroups of $G$.
Thus, if we write
\[
t=
\Big( (x,y, y^{-1}x^{-1}), (x^ky^{\ell}, x^my^n, x^{-k-m}y^{-\ell-n})\Big),
\]
then $t$ is a Beauville structure for $G$ if and only if
\begin{equation}
	\label{restrictions on powers}
	 k,\ \ell,\ m,\ n,\ k+m,\ \ell+n,\ k-\ell,\ m-n \ \text{and }\  k+m-\ell-n \in U(\Z_p), 
\end{equation}
and $ \left(\begin{array}{cc} k &\ell\\ m & n \end{array}\right) \in \GL(2,p)$.
Once the first triple  has been fixed, then by Lemma 3.21 in \cite{GP}, there are
\[
(p-1)(p-2)(p-3)(p-4)
\]
such quadruples $(k,\ell,m,n)$.

Thus, it remains to determine the number of ways to choose the first triple $(x,y,y^{-1}x^{-1})$.
Since there are $p+1$ maximal subgroups, a maximal subgroup can be chosen in $p+1$ ways. Then any of the $p-1$ non-identity element in the chosen maximal subgroup can be $x$. Then to determine $y$, we can choose one of the remaining $p$ maximal subgroups and any of the $p-1$ non-identity element in the chosen subgroup. This completes the proof.
\end{proof}

The following result gives a sufficient condition for a wide class of non-abelian Beauville $p$-groups $G$ so that every Beauville structure for $G$ is inherited by $G/\Phi(G)$.

\begin{lem} {\cite[Lemma 3.3.1]{gul2}}
	\label{inherited}
	Let $G$ be a Beauville $p$-group of exponent $p^e$ such that for every $x,y\in G$
	\begin{equation}
		\label{quasi-homomorphism}
		x^{p^{e-1}} = y^{p^{e-1}}
		\quad
		\text{if and only if}
		\quad
		(xy^{-1})^{p^{e-1}} = 1.
	\end{equation}
	If $|G:\Omega_{e-1}(G)|\leq p^3$, then every Beauville structure of $G$ is inherited by $G/\Phi(G)$.
\end{lem}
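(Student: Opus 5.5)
We take a Beauville structure $\big((x_1,y_1,z_1),(x_2,y_2,z_2)\big)$ of $G$, with $z_i=(x_iy_i)^{-1}$. The aim is to show that the six images $\bar x_1,\bar y_1,\bar z_1,\bar x_2,\bar y_2,\bar z_2$ in $\overline G=G/\Phi(G)\cong C_p\times C_p$ lie in six pairwise distinct maximal subgroups. Once that is known, the triple $(\bar T_1,\bar T_2)$ is a Beauville structure for $\overline G$, by the characterization used in the proof of \cref{the number of B.S-abelian}. So the whole task is to derive a contradiction from two of these six elements lying in the same maximal subgroup $M/\Phi(G)$.

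First, the elements lying in a common triple are already separated. Since $\{x_1,y_1\}$ generates $G$, the elements $\bar x_1,\bar y_1$ are linearly independent in $\overline G$, and the same holds for any two of $\bar x_1,\bar y_1,\bar z_1$. Hence we only need to treat a pair $u\in T_1$, $v\in T_2$ with $\bar u,\bar v$ in the same maximal subgroup. In that case $v\equiv u^{\lambda}\pmod{\Phi(G)}$ for some $\lambda$ with $p\nmid\lambda$. Replacing $u$ by the generator $u^{\lambda}$ of $\langle u\rangle$, we may assume $v=uw$ with $w\in\Phi(G)$.

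Next we show that $u$ has order $p^e$. Here the hypothesis $|G:\Omega_{e-1}(G)|\le p^3$ is used. If $u\in\Omega_{e-1}(G)$, we would like to find $u'\in T_1$ outside $\Omega_{e-1}(G)$. I expect this to need an argument that $\Omega_{e-1}(G)$ is a proper subgroup with $\overline G\neq\Omega_{e-1}(G)\Phi(G)/\Phi(G)$. I also expect condition \eqref{quasi-homomorphism} to say that $x\mapsto x^{p^{e-1}}$ behaves like a homomorphism modulo its kernel. The cleanest route is to use \eqref{quasi-homomorphism} with $y=1$, which gives $\Omega_{e-1}(G)=\{g: g^{p^{e-1}}=1\}$. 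Then the map $g\Omega_{e-1}(G)\mapsto g^{p^{e-1}}$ is well defined and injective from cosets to $G^{p^{e-1}}$, so the $p^{e-1}$-th power depends only on the coset of $\Omega_{e-1}(G)$.

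Now we use $|G:\Omega_{e-1}(G)|\le p^3$ to show that $u$ and $v$ lie in the same coset of $\Omega_{e-1}(G)$. This is the main obstacle. The approach is to show that $\Phi(G)\le\Omega_{e-1}(G)$, or at least $w\in\Omega_{e-1}(G)$. Since the index is at most $p^3$ and $G$ is a $2$-generator group of exponent $p^e$, one expects $G/\Omega_{e-1}(G)$ to be elementary abelian of rank at most $2$ in these cases, forcing $\Phi(G)\le\Omega_{e-1}(G)$. If that fails, I would try to treat index $p^3$ separately, using that $\langle u\rangle\Omega_{e-1}(G)$ captures $v$ up to a power. Granting $uv^{-1}\in\Omega_{e-1}(G)$, condition \eqref{quasi-homomorphism} gives $u^{p^{e-1}}=v^{p^{e-1}}$. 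This is a nontrivial element of $\langle u\rangle\cap\langle v\rangle\subseteq\Sigma(x_1,y_1)\cap\Sigma(x_2,y_2)$, contradicting the Beauville condition.

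It remains to handle the case where $u$ has order less than $p^e$. If $u\in\Omega_{e-1}(G)$ but $\bar u\ne1$, then $v$ lies in the same coset $u\,\Omega_{e-1}(G)$ after the reduction above. So either both have order $p^{f}$ with $f<e$, or I would rerun the argument with $\Omega_{f-1}$ in place of $\Omega_{e-1}$, comparing $u^{p^{f-1}}$ and $v^{p^{f-1}}$. I would try to obtain this by downward induction, applying the argument to a suitable subgroup or quotient of $G$ and checking that the same power hypothesis holds there.
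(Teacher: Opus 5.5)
\textbf{The gap is at the step you call the main obstacle, and the claim you hope for there is false.}

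Every $p$-th power has order at most $p^{e-1}$, so $G^p\le\Omega_{e-1}(G)$ always holds, and $Q=G/\Omega_{e-1}(G)$ has exponent $p$. Since $Q$ is $2$-generated, if $|Q|=p^3$ it is the extraspecial group of order $p^3$ and exponent $p$. Then $\Phi(G)/\Omega_{e-1}(G)=\Phi(Q)=Z(Q)$ has order $p$.

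So in the case $|G:\Omega_{e-1}(G)|=p^3$, which the hypothesis explicitly allows, $\Phi(G)\not\le\Omega_{e-1}(G)$. Elements congruent modulo $\Phi(G)$ then need not have equal $p^{e-1}$-th powers. Take the group of Theorem A with $j=e$. It has class $2$, so the hypotheses hold, and $|G:\Omega_{e-1}(G)|=p^3$. There $u=a$ and $v=a[a,b]$ satisfy $v\equiv u\pmod{\Phi(G)}$, yet $\langle u\rangle\cap\langle v\rangle=1$. So no argument that ends with a nontrivial element of $\langle u\rangle\cap\langle v\rangle$ can work.

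The missing idea is that $\Sigma(x_i,y_i)$ is closed under conjugation. In $Q$, the conjugacy class of a non-central element $q$ is exactly $q\,Z(Q)=q\,\Phi(Q)$. Hence $v\equiv u\pmod{\Phi(G)}$ gives $v\,\Omega_{e-1}(G)=u^g\,\Omega_{e-1}(G)$ for some $g\in G$. The hypothesis then yields
$1\ne v^{p^{e-1}}=(u^{p^{e-1}})^g\in\langle v\rangle\cap\langle u\rangle^g\subseteq\Sigma(x_2,y_2)\cap\Sigma(x_1,y_1)$.
When $|Q|=p^2$, $Q$ is elementary abelian and $\Omega_{e-1}(G)=\Phi(G)$, so your argument does go through in that case.

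\textbf{The order-$p^e$ step is also not established.} The downward induction on $\Omega_{f-1}$ is vague, since nothing guarantees the hypothesis passes to the subgroups or quotients you would use. It is also unnecessary. What you need is $\Omega_{e-1}(G)\le\Phi(G)$. This comes from the \emph{lower} bound $|Q|\ge p^2$, which follows from the Beauville property, not from $|Q|\le p^3$.

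The hypothesis makes $g\,\Omega_{e-1}(G)\mapsto g^{p^{e-1}}$ a bijection from $Q$ onto the set of $p^{e-1}$-th powers, compatible with taking powers. So if $|Q|=p$, these powers form a single subgroup $\langle z\rangle$ of order $p$. Each generating triple has an element outside the maximal subgroup $\Omega_{e-1}(G)$. Hence $\langle z\rangle\le\Sigma(x_1,y_1)\cap\Sigma(x_2,y_2)$, a contradiction.

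For $|Q|\ge p^2$, the exponent-$p$ group $Q$ is non-cyclic. Therefore $|G:\Phi(G)\Omega_{e-1}(G)|=|Q:\Phi(Q)|=p^2=|G:\Phi(G)|$, that is, $\Omega_{e-1}(G)\le\Phi(G)$. Every element of a generating triple lies outside $\Phi(G)$, so it has order $p^e$.

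\textbf{A minor slip.} Putting $y=1$ in the hypothesis gives a tautology. The fact that $\{g : g^{p^{e-1}}=1\}$ is the subgroup $\Omega_{e-1}(G)$ follows instead by applying the hypothesis to two elements $x,y$ with $x^{p^{e-1}}=y^{p^{e-1}}=1$.
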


\begin{thm}
\label{number of all B.S.}
Let $G$ be a non-abelian metacyclic Beauville $p$-group or a Beauville $p$-groups of class $2$.Then
\[
|\U(G)|=|\Phi(G)|^4(p+1)p(p-1)^3(p-2)(p-3)(p-4).
\]
\end{thm}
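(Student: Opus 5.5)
The plan is to show that the Beauville structures of $G$ are \emph{exactly} the preimages of Beauville structures of $G/\Phi(G)\cong C_p\times C_p$. The count then follows from \cref{the number of B.S.-abelian}: a structure is determined by the four elements $x_1,y_1,x_2,y_2$, each a coset representative modulo $\Phi(G)$, which contributes the factor $|\Phi(G)|^4$.

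\emph{Step 1 (every structure of $G$ is inherited).} First I would check that the groups in \cref{presentation-metacyclic} and \eqref{class 2 beauville} satisfy the hypotheses of \cref{inherited}.
\begin{itemize}
\item[(a)] These groups are regular, since $p\geq 5$ and they are metacyclic or of class $2<p$. Hence $(xy^{-1})^{p^{e-1}}=1$ if and only if $x^{p^{e-1}}=y^{p^{e-1}}$, which is condition \eqref{quasi-homomorphism}.
\item[(b)] Regularity also gives $\Omega_{e-1}(G)=\{g: g^{p^{e-1}}=1\}$ and $|G:\Omega_{e-1}(G)|=|G^{p^{e-1}}|$.
\item[(c)] Using the presentations, $G^{p^{e-1}}$ is generated by $a^{p^{e-1}}$ and $b^{p^{e-1}}$, because the powers of commutators die at this level or land inside. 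So it has order at most $p^2\le p^3$.
\end{itemize}
\cref{inherited} then says that every Beauville structure of $G$ maps onto a Beauville structure of $G/\Phi(G)$.

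\emph{Step 2 (every lift is a structure).} Conversely, take any lift of a Beauville structure of $\overline G=G/\Phi(G)$. Generation lifts by the Burnside basis theorem, so $\{x_i,y_i\}$ generate $G$. The images of the six elements $x_1,y_1,x_1y_1,x_2,y_2,x_2y_2$ lie in six distinct maximal subgroups of $\overline G$. I must show $\Sigma(x_1,y_1)\cap\Sigma(x_2,y_2)=1$.

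A nontrivial element of a cyclic subgroup $\langle z\rangle^g$ has a power of order $p$, namely a conjugate of a power of $z^{p^{e'-1}}$, where $o(z)=p^{e'}$. So it suffices to show that the order-$p$ subgroups $\langle (z^{p^{o(z)-1}})^g\rangle$ arising from the two triples are distinct. Since $z\notin\Phi(G)$ here, I expect $o(z)=p^e$ for every such $z$. Regularity then gives that the map $g\mapsto g^{p^{e-1}}$ factors through a bijection between the maximal subgroups (equivalently, lines of $\overline G$) and the subgroups of order $p$ of $G^{p^{e-1}}$, and that these images are central. Distinct lines thus yield distinct subgroups of order $p$, so the intersection is trivial.

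\emph{Step 3 (count).} Combining Steps 1 and 2, $\U(G)$ is the full preimage of $\U(\overline G)$ under the map $(x_1,y_1,x_2,y_2)\mapsto$ images. Each fibre has size $|\Phi(G)|^4$, and \cref{the number of B.S.-abelian} gives the stated formula.

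The main obstacle is verifying $|G:\Omega_{e-1}(G)|\le p^3$, together with the claim that every element outside $\Phi(G)$ has order $p^e$ with $p^{e-1}$-th powers separating maximal subgroups. This is delicate in the class-$2$ case with $0<k<j$, where $b^{p^i}=[b,a]^{p^k}$ and $b$ has order $p^{e}$ only because $e=i+j-k$. I would handle it with the class-$2$ formula $(a^\alpha b^\beta)^{n}=a^{\alpha n}b^{\beta n}[b,a]^{-\alpha\beta\binom n2}$. For $n=p^{e-1}$ the commutator factor vanishes because $p^{j}\mid \binom{p^{e-1}}{2}$ when $e-1\ge j$. The case $e-1<j$ forces $e=j=i$, $k=i$, which contradicts $k<j$. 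This reduces everything to the pair $(a^{p^{e-1}},b^{p^{e-1}})$ spanning a group of order $p^2$. For the metacyclic case I would use the analogous power formula from regularity.
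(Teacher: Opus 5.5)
Your architecture matches the paper's: inheritance via \cref{inherited}, then the fact that every lift of a Beauville structure of $G/\Phi(G)$ is a Beauville structure of $G$, then the fibre count. The paper simply quotes the lifting step from the proof of Theorem~2.5 in \cite{FG}, whereas you try to prove it. That proof rests on $|G^{p^{e-1}}|=p^2$ with central $p^{e-1}$-th powers, and this fails for a case covered by the statement.

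In your last paragraph you rule out $e-1<j$ only for the family $0<k<j$. For $k=j$ (the groups of Theorem A) one has $i=e$, and $j=e$ is allowed. Take
$G=\langle a,b\mid a^{p^e}=b^{p^e}=[b,a]^{p^e}=[b,a,a]=[b,a,b]=1\rangle$. Here $[b,a]^{p^{e-1}}\neq 1$, and three things go wrong:
\begin{itemize}
\item $G^{p^{e-1}}=\langle a^{p^{e-1}},b^{p^{e-1}},[b,a]^{p^{e-1}}\rangle$ has order $p^3$, so your claim (c) is false.
\item $G^{p^{e-1}}$ has $p^2+p+1$ subgroups of order $p$, so there is no bijection with the $p+1$ maximal subgroups.
\item $[a^{p^{e-1}},b]=[a,b]^{p^{e-1}}\neq 1$, so the relevant powers are not central and you cannot ignore the conjugations in $\Sigma$.
\end{itemize}
So the key sentence of your Step~2 is false for this group.

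The conclusions still hold, but they need a different justification. For Step~1 you only need $|G:\Omega_{e-1}(G)|\le p^3$, which holds here. The paper gets it from $G^p\le\Omega_{e-1}(G)$ and $|G:G^p|\le p^3$.

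For Step~2, drop centrality and use \eqref{quasi-homomorphism} directly. Suppose $\langle z_1\rangle^g\cap\langle z_2\rangle^h\neq 1$, where $z_1,z_2$ come from different triples and $o(z_1)=o(z_2)=p^e$.
\begin{itemize}
\item The unique subgroups of order $p$ coincide, so $(z_1^g)^{p^{e-1}}=\bigl((z_2^h)^{\lambda}\bigr)^{p^{e-1}}$ for some $\lambda$ with $p\nmid\lambda$.
\item By \eqref{quasi-homomorphism}, $z_1^g(z_2^h)^{-\lambda}$ then has order at most $p^{e-1}$.
\item Every element outside $\Phi(G)$ has order $p^e$. This holds even when $j=e$: for $z\equiv a^{\alpha}b^{\beta}\pmod{G'}$ one has $z^{p^{e-1}}\equiv a^{\alpha p^{e-1}}b^{\beta p^{e-1}}$ modulo $\langle [b,a]^{p^{e-1}}\rangle$. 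Hence $z_1^g(z_2^h)^{-\lambda}\in\Phi(G)$.
\item Since $G'\le\Phi(G)$, this gives $z_1\equiv z_2^{\lambda}\pmod{\Phi(G)}$, so $z_1$ and $z_2$ lie in the same maximal subgroup, a contradiction.
\end{itemize}
This argument works uniformly for all the groups in the statement and makes the claimed bijection and centrality unnecessary.
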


\begin{proof}
Assume that $G$ is a non-abelian metacyclic Beauville $p$-group or a Beauville $p$-groups of class $2$. Then \cref{quasi-homomorphism} holds for $G$ (see \cite[page 3]{FG}). Furthermore,  by \cite[Theorem 3.14]{suz} and \cite[Theorems 1 and 4]{fer} we have
\[
|G:\Omega_{e-1}(G)|=|G^{p^{e-1}}|.
\]
If $G$ a non-abelian metacyclic Beauville $p$-group,  we know that $G$ is powerful. Hence we have $G^{p^{e-1}}=\langle a^{p^{e-1}}, b^{p^{e-1}}\rangle$ is of order $p^2$.
If $G$ is of class $2$, then $G/G^p$ is of order at most $p^3$, where $G^p \leq \Omega_{e-1}(G)$.
Thus, by \cref{inherited}, all Beauville structures of $G$ are inherited by $G/\Phi(G)$. This means that all of the six elements in a Beauville structure $(T_1, T_2)$ for $G$ fall into distinct maximal subgroups.

On the other hand, by the proof of Theorem 2.5 in \cite{FG}, we know that
every lift of a Beauville structure for $G/\Phi(G)$ is a Beauville structure for $G$.
This together with \cref{the number of B.S-abelian} yield that
\[
|\U(G)|=|\Phi(G)|^4(p+1)p(p-1)^3(p-2)(p-3)(p-4).
\]
\end{proof}

\begin{rmk}
\label{all B.S.-rmk}
As a result of \cref{number of all B.S.}, if $G$ is a non-abelian metacyclic Beauville $p$-group or a Beauville $p$-groups of class $2$, then any generating triple $\left(x,y,(xy)^{-1}\right)$ of $G$ appears in some Beauville structures for $G$.
\end{rmk}


\section{The Group $\Au(G)$}
\label{the group}

In this section we will introduce the group $\Au(G)$ that acts on the set of all Beauville structures for a Beauville group $G$.

Firstly, assume that $G$ is a $2$-generator finite group.
We define 
\[
\T(G) := \{(x,y,z) \in G\times G \times G \mid  \  G=\langle x ,y \rangle, \ xyz=1\}.
\]
It is clear that the automorphism group $\Aut(G)$ of $G$ acts diagonally on $\T(G)$, that is, for each $\alpha \in \Aut(G)$ and for each $(x,y,z) \in \T(G)$, the action of $\alpha$ on $(x,y,z)$ is $\left(\alpha(x), \alpha(y), \alpha(z)\right)$. 

For $\alpha \in \Aut(G)$, we use the notation $\widetilde{\alpha}$ in $\DAut(G)$ which refers to the diagonal action of $\Aut(G)$.
In fact the map 
\begin{align*} 
	\Aut(G) & \longrightarrow \DAut(G) \\ 
	\alpha & \longmapsto \widetilde{\alpha}
\end{align*}
is injective.

We additionally define the following permutations of $\T(G)$:
\begin{align*}
&\sigma_0(x,y,z)=(x,y,z), \ \ &\sigma_1(x,y,z)=(y,z,x), \ \ \ &\sigma_2(x,y,z)=(z,x,y)\\
&\sigma_3(x,y,z)=(z,y,x^y), \ \  &\sigma_4(x,y,z)=(y,x,z^x), \ \ \ &\sigma_5(x,y,z)=(x,z,y^z)\\
\end{align*}

\vspace{5pt}

If we set $w:=w(X,Y)$ for a word in $X$ and $Y$, we define the following operation $\beta_{w}$ on $\T(G)$:
\[
\beta_{w}(x, y ,z)=\left(x^{w(x, y)}, y^{w(x, y)}, z^{w(x, y)}\right)
\]


\vspace{5pt}

Consider the words $w_1(X, Y)=Y$ and $w_2(X, Y)=X$. Then the action of the composition $\sigma_i\circ \sigma_j$ on a triple $(x,y,z)$ is given in the following table where the composition $\sigma_i \circ \sigma_j$ can be found in the intersection of the $i$th row and the $j$th column.

\begin{table}[h]
	\begin{tabular}{|c|c|c|c|c|c|c|}
		 \hline
		& \boldmath$\sigma_0$ & \boldmath$\sigma_1$ & \boldmath$\sigma_2 $& \boldmath$\sigma_3$ & \boldmath$\sigma_4$ &\boldmath $\sigma_5$ \\
		\hline
		\boldmath$\sigma_0$ & $\sigma_0$ & $\sigma_1$ & $\sigma_2 $& $\sigma_3$ & $\sigma_4$ & $\sigma_5$\\
		\hline
		\boldmath$\sigma_1$ & $\sigma_1$ & $\sigma_2$ &  $\sigma_0$ & $\beta_{w_2}\circ \sigma_4$ &$\beta_{w_2}\circ \sigma_5$  & $\beta_{w_2}\circ \sigma_3$  \\
		\hline
	\boldmath$\sigma_2$ & $\sigma_2$ & $\sigma_0$  & $\sigma_1$  & $\beta_{w_1^{-1}}\circ \sigma_5$ & $\beta_{w_1^{-1}}\circ \sigma_3$ & $\beta_{w_1^{-1}}\circ \sigma_4$  \\
		\hline
	\boldmath$\sigma_3$ & $\sigma_3$  &  $\sigma_5$& $\sigma_4$  & $\beta_{w_1}\circ \sigma_0$  & $\beta_{w_1}\circ \sigma_2$  & $\beta_{w_1}\circ \sigma_1$ \\
		\hline
	\boldmath$\sigma_4$ & $\sigma_4$ & $\sigma_3$ & $\sigma_5$ & $\sigma_1$ & $\sigma_0$  & $\sigma_2$  \\
		\hline
		\boldmath$\sigma_5$ & $\sigma_5$ & $\sigma_4$  & $\sigma_3$  &  $\beta_{w_2^{-1}}\circ \sigma_2$& $\beta_{w_2^{-1}}\circ \sigma_1$ & $\beta_{w_2^{-1}}\circ \sigma_0$ \\
\hline
	\end{tabular}
\end{table}

Let us write
\[
A(G)= \langle \DAut(G), \sigma_1, \dots, \sigma_5\rangle=\langle \DAut(G), \sigma_1, \sigma_4\rangle
\]

\begin{rmk}
	Observe that $\DAut(G)$ commutes with $\sigma_1$ and $\sigma_4$.
	Thus $\DAut(G)\unlhd A(G)$, and so
	every element $\mu \in A(G)$ can be written as a product of an element in $\langle  \sigma_1, \sigma_4\rangle$ and an element in $\DAut(G)$. 
\end{rmk}

\begin{rmk}
	Using the table above, notice that every element of 
	$\langle  \sigma_1, \sigma_4\rangle$ can be written of the form $\beta_{w}\circ \sigma_i$ for some word $w$ and for some $1\leq i\leq 5$.
	Thus, every element $\mu \in A(G)$ can be written as $\mu=\widetilde{\alpha}\circ\beta_{w} \circ\sigma_i$ where $\alpha \in \Aut(G)$.
\end{rmk}

We also define a subgroup
\[
I(G)= \langle \DInn(G), \sigma_1, \dots, \sigma_5\rangle
\]
of $A(G)$, where $\DInn(G)$ refers to the diagonal action of the inner automorphism group $\Inn(G)$ on $\T(G)$.
Notice that we have
\[
I(G)=  \DInn(G)\langle  \sigma_1, \sigma_4 \rangle.
\]
Then as above, every element of $I(G)$ can be written as 

\[\widetilde{\iota}_g\circ\beta_{w}\circ\sigma_i
\]
for some word $w$ and for some $1\leq i\leq 5$, 
where $\iota_g$  stands for the inner automorphism for some fixed $g\in G$.

Recall that $\U(G)$ is the set of all Beauville structures for a Beauville group $G$.
Now if we set
\[
\DDAut(G):=\{(\widetilde{\alpha}, \widetilde{\alpha} ) \in \DAut(G) \times \DAut(G) \mid  \alpha  \in \Aut(G) \},
\]
then we consider the group $B(G)$ generated by the action of $I(G) \times I(G)$ and by the action of $\DDAut(G)$ on $\U(G)$.

Since $\DAut(G)$ commutes with $\sigma_i$ for any $0\leq i\leq 5$ and $\DInn(G)\unlhd \DAut(G)$, it then follows that $I(G) \times I(G)$ is a normal subgroup of $B(G)$.
Hence
\[
B(G)=\big(I(G) \times I(G)\big)\DDAut(G).
\]

We next define the following operation
\[
\tau\big((a,b,c), (d,e,f)\big)=\big(  (d,e,f), (a,b,c) \big)
\]
on $\U(G)$, and we set 
\[
\Au(G)=\langle B(G), \tau \rangle
\]
for the group generated by these permutations of $\U(G)$.

We are now in a position to state when two pairs of Beauville structures for a Beauville group $G$ give rise to isomorphic Beauville surfaces. To this purpose, we refer to \cite[Corollary 2]{GT} and \cite[Proposition 3.3]{BCG}.

\begin{thm}
Let $G$ be a Beauville group and let $t=(T_1, T_2)$, $t'=(T_1', T_2') \in \U(G)$. Then the Beauville surfaces corresponding to $t$ and $t'$ are isomorphic if and only if $t$ and $t'$ are in the same $A_{\U}(G)$-orbit.
\end{thm}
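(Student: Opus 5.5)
The plan is to translate isomorphisms of Beauville surfaces into combinatorial data on pairs of triples via the Riemann existence theorem, and then check that the resulting equivalence relation is exactly the one generated by $I(G)\times I(G)$, $\DDAut(G)$ and $\tau$.

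\emph{Step 1 (lifting isomorphisms).} Let $S=(C_1\times C_2)/G$ and $S'=(C_1'\times C_2')/G$ come from $t=(T_1,T_2)$ and $t'=(T_1',T_2')$. The action is free and each $C_i/G\cong\P_1(\C)$, so the realization of a surface isogenous to a higher product by its minimal (unmixed) realization is unique, by Catanese's theorem. Hence any biholomorphism $F\colon S\to S'$ lifts to a biholomorphism $\widehat F\colon C_1\times C_2\to C_1'\times C_2'$. Since the curves have genus $\ge 2$, such a map is either $f_1\times f_2$ or $(f_1\times f_2)\circ(\text{swap})$. The swap will account for $\tau$. Because $\widehat F$ normalizes the deck groups, there is a single $\alpha\in\Aut(G)$ with
\[
\widehat F\circ g=\alpha(g)\circ\widehat F \quad\text{for all } g\in G.
\]
Each $f_i$ is then $\alpha$-equivariant, with the \emph{same} $\alpha$ on both factors. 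This common $\alpha$ is what produces $\DDAut(G)$ rather than $\DAut(G)\times\DAut(G)$. The converse also holds: such $\alpha$-equivariant $f_i$ descend to an isomorphism $S\to S'$.

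\emph{Step 2 (triples and equivariant isomorphisms of curves).} By Riemann's existence theorem, a $G$-curve $C\to C/G\cong\P_1$ branched over $\{0,1,\infty\}$ corresponds to a generating triple $(x,y,z)$ with $xyz=1$. The triple is determined up to simultaneous conjugation, which comes from the choice of base point and paths, i.e.\ from $\DInn(G)$. An $\alpha$-equivariant isomorphism $f\colon C\to C'$ descends to a Möbius map of $\P_1$ preserving the branch set, hence permuting $\{0,1,\infty\}$ through an element of $S_3$.

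I will verify that the effect of each such permutation on the monodromy triple is one of the $\sigma_i$, up to conjugation. The transpositions introduce a conjugate, as in $\sigma_3(x,y,z)=(z,y,x^y)$, because the standard generators of $\pi_1(\P_1\smallsetminus\{0,1,\infty\})$ are permuted only up to conjugation. Combining this with Step 1, $f$ transforms the triple $T$ of $C$ into $\widetilde\alpha(\beta_w\sigma_i(T))$ up to $\DInn(G)$. So $C\cong C'$ $\alpha$-equivariantly if and only if $T'\in \widetilde\alpha\, I(G)\, T$. By the table in this section, the $\sigma_i$ together with $\DInn(G)$ generate exactly $I(G)$.

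\emph{Step 3 (assembly).} Applying Step 2 to both factors with the same $\alpha$, and adding the possible swap from Step 1, gives the statement. The surfaces of $t$ and $t'$ are isomorphic if and only if $t'=\kappa(t)$ for some $\kappa$ in $\langle (I(G)\times I(G))\DDAut(G),\tau\rangle=\Au(G)$.

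I expect the main obstacle to be Step 1. It needs the uniqueness of the minimal realization together with the argument that $\Aut(C_1\times C_2)$ is generated by $\Aut(C_1)\times\Aut(C_2)$ and the swap when $C_1\cong C_2$. It also needs a careful proof that the induced automorphism of $G$ is common to both factors; I would take this from \cite[Proposition 3.3]{BCG}. A secondary technical point is the bookkeeping in Step 2 showing that the Möbius transformations give precisely the $\sigma_i$ up to $\DInn(G)$, which is the content of \cite[Corollary 2]{GT}.
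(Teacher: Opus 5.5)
Your proposal is correct and takes essentially the same route as the paper, which does not reprove this statement but simply invokes \cite[Corollary 2]{GT} and \cite[Proposition 3.3]{BCG}. Your Steps 1--3 are the standard lifting and Riemann-existence argument behind those results, and you defer to the same two references for the delicate points: uniqueness of the minimal realization with a common $\alpha$ on both factors, and the $S_3$/$\DInn(G)$ bookkeeping that yields exactly $I(G)$.
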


As a result, in order to compute the number of isomorphism classes of Beauville surfaces with group $G$, we need to count the number of orbits of the group $A_{\U}(G)$ on the set $\U(G)$. By using Cauchy-Frobenius Lemma, this number is equal to
\[
\dfrac{1}{|A_{\U}(G)|}\sum_{t\in \U(G)}|\Stab_{A_{\U}(G)}(t)|
\]

The following lemma and proposition will be used in determination of the order of $\Au(G)$.

\begin{lem}
	\label{index of B(G)}
	Let $G$ be a nilpotent Beauville group.
	Then we have that $|\Au(G):B(G)|=2$.
\end{lem}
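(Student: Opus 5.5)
We need to show $|\Au(G):B(G)|=2$. The plan is to check two things: $\tau$ normalizes $B(G)$, and $\tau\notin B(G)$.

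\emph{Step 1: $\tau$ normalizes $B(G)$.} Conjugating by $\tau$ swaps the two coordinates. It therefore sends $(\mu_1,\mu_2)\in I(G)\times I(G)$ to $(\mu_2,\mu_1)$, and it fixes each $(\widetilde\alpha,\widetilde\alpha)\in\DDAut(G)$. Hence $\tau B(G)\tau^{-1}=B(G)$. Since $\tau^2=\id$, we get $\Au(G)=B(G)\cup\tau B(G)$, so the index is at most $2$.

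\emph{Step 2: $\tau\notin B(G)$.} Suppose instead that $\tau=(\mu_1,\mu_2)\circ(\widetilde\alpha,\widetilde\alpha)$ as permutations of $\U(G)$. Then for every Beauville structure $(T_1,T_2)$ we would have $T_2=\mu_1\widetilde\alpha(T_1)$. By the previous remarks, $\mu_1\widetilde\alpha$ has the form $\widetilde{\iota_g\alpha}\circ\beta_w\circ\sigma_i$. So each component of $T_2$ would be, up to conjugation, the image under the single automorphism $\alpha$ of a component of $T_1$. This is a strong rigidity, and it fails once we fix $T_1$ and let $T_2$ vary.

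Concretely, we use nilpotency. Let $G$ be nilpotent, so $G$ is a direct product of its Sylow subgroups. The Beauville condition forces $\Sigma(T_1)\cap\Sigma(T_2)=1$. We may pass to a Sylow $p$-subgroup $P$ on which the structure is nontrivial, or argue directly in the Frattini quotient $G/\Phi(G)$; this quotient is elementary abelian of rank $2$ for each relevant prime, since $G$ is $2$-generated and a Beauville group. Automorphisms, conjugations and the $\beta_w$ maps all descend to $G/\Phi(G)$. There conjugation is trivial, so $\beta_w$ acts trivially and $\sigma_i$ acts as a permutation of the three entries (the conjugate $x^y$ becomes $x$).

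Now fix $T_1$ and take two Beauville structures $(T_1,T_2)$ and $(T_1,T_2')$. Their images in $G/\Phi(G)$ lie in different maximal subgroups: choose $\bar T_2$ and $\bar T_2'$ to involve maximal subgroups that are not the same set of three lines. This is possible since $p\ge5$ gives at least $6$ maximal subgroups. Both triples would have to be images of $\bar T_1$ under one fixed linear map $\bar\alpha$ composed with one of $6$ permutations. So the set of lines occupied by $\bar T_2$ would equal $\bar\alpha$ applied to the set of lines of $\bar T_1$, and the same would hold for $\bar T_2'$. This forces both to occupy the same set of lines, which is a contradiction.

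For a general nilpotent group we would carry this out in the Sylow subgroup for one prime. Beauville structures on a direct product can be chosen componentwise (coprime orders), so we can vary $T_2$ in one factor and keep everything else fixed.

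The main obstacle I expect is Step 2 in full generality. We must make sure enough Beauville structures sharing the same first triple exist, so that the projected triples $\bar T_2$ can be made to occupy different sets of lines. For the groups of this paper this follows from \cref{number of all B.S.} and \cref{all B.S.-rmk}: any lift works. In general we would instead rely on the Frattini-quotient criterion of \cite{FG}.
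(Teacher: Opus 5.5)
\textbf{There is a genuine gap in Step 2.} Your Step 1 is correct, and it makes explicit the bound $|\Au(G):B(G)|\le 2$ that the paper leaves implicit. The opening of Step 2 is also right: if $\tau=(\mu_1,\mu_2)\circ(\widetilde\alpha,\widetilde\alpha)$, then $T_2=\mu_1\widetilde\alpha(T_1)$ for every $(T_1,T_2)\in\U(G)$.

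The gap is in how you extract the contradiction. You pass to $G/\Phi(G)$ and compare only the \emph{sets of maximal subgroups} occupied by $\bar T_2$ and $\bar T_2'$. This invariant is too coarse.
\begin{itemize}
\item Take $p=5$, which Theorems A--C cover. Here $G/\Phi(G)$ has exactly $6$ maximal subgroups, and by \cref{number of all B.S.} the six entries of every Beauville structure lie in distinct ones. So every partner $\bar T_2$ of $\bar T_1$ occupies exactly the three lines complementary to those of $\bar T_1$. Two partners with different line sets never exist, and no contradiction is reachable.
\item The lemma is stated for all nilpotent Beauville groups, so it includes Beauville $2$-groups. There every generating triple occupies all three maximal subgroups of $C_2\times C_2$, so your invariant is constant.
\item In general, the existence statement you need is exactly the ``obstacle'' you leave open. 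The criterion of \cite{FG} does not cover arbitrary nilpotent groups.
\end{itemize}

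\textbf{How to repair it.} Do not pass to the quotient at all. Since $\mu_1\widetilde\alpha$ is one fixed permutation of $\T(G)$, the triple $T_1$ determines $T_2$. So it suffices to exhibit two distinct partners of a single $T_1$, and these always exist. Write $T_2=(u,v,w)$. Then $(T_1,(v,u,w^u))\in\U(G)$, because $\langle vu\rangle$ is conjugate to $\langle uv\rangle$ and hence $\Sigma(v,u)=\Sigma(u,v)$. Moreover $(v,u,w^u)\neq(u,v,w)$, since $u\neq v$ (as $G=\langle u,v\rangle$ is not cyclic).

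This is the mirror image of the paper's proof. The paper fixes $T_2$ and replaces $T_1=(x,y,z)$ by $(y,x,z^x)$, which is again a Beauville partner of $T_2$ for the same reason. That yields $\beta_w\circ\sigma_i(x,y,z)=\beta_w\circ\sigma_i(y,x,z^x)$, which the paper rules out using $G'\le\Phi(G)$ and the fact that $x,y,z$ lie in distinct maximal subgroups.
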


\begin{proof}
	Suppose, on the contrary, that $\tau \in B(G)$. Then we have
	\[
	\tau= \big(
	\widetilde{\alpha} \circ \widetilde{\iota}_g \circ \beta_{w} \circ \sigma_i, \ \  \widetilde{\alpha} \circ \widetilde{\iota}_h \circ \beta_{w'} \circ \sigma_j
	\big)
	\]
	for some words $w$ and $w'$, for some $\alpha \in \Aut(G)$ and for
	$0\leq i, j \leq 5$.

	Let $(T_1, T_2)$ form a Beauville structure for $G$. Set $T_1=(x, y, z)$. If we set $T_1'=(y, x, z^x)$, then $(T_1', T_2)$ also form a Beauville structure for $G$. Hence
	\[
	\widetilde{\alpha} \circ \widetilde{\iota}_g \circ \beta_{w} \circ \sigma_i(x,y,z)=T_2
	\ \ \
	\text{and}
	\ \ \
	\widetilde{\alpha} \circ \widetilde{\iota}_g \circ \beta_{w} \circ \sigma_i(y, x, z^x)=T_2.
	\]
	This implies that
	\[
	\beta_{w} \circ \sigma_i(x,y,z)= \beta_{w} \circ \sigma_i(y, x, z^x).
	\]
	Since $G$ is nilpotent, we have $G'\leq \Phi(G)$. This, together with the fact that $x, y$ and $z$ fall in three different maximal subgroups, implies that the above equality cannot hold for any $i$ with $0\leq i\leq 5$ and for any word $w$.
	Thus, $\tau \notin B(G)$.
\end{proof}

\begin{pro}
\label{quotient by J}
Let $G$ be a two generator finite group. Then the set
\[
J(G)=\{ \beta_w \mid w \ \ \text{is a word in a set with two elements} \}
\]
is a normal subgroup of $\langle \sigma_1, \sigma_4 \rangle$.

Furthermore, if $G$ is a nilpotent group, then $\langle \sigma_1, \sigma_4 \rangle /J(G)\cong S_3$.
\end{pro}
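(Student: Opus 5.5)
The plan has three parts. First, show that $J(G)$ is closed under composition and inverses. Second, show it is normalized by $\sigma_1$ and $\sigma_4$. Third, for nilpotent $G$, build a surjection $\langle\sigma_1,\sigma_4\rangle\to S_3$ with kernel exactly $J(G)$.

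\textbf{$J(G)$ is a subgroup.} Compute directly: $\beta_{w'}\circ\beta_{w}(x,y,z)$ conjugates $x^{w(x,y)}, y^{w(x,y)}$ by $w'(x^{w},y^{w})=w'(x,y)^{w(x,y)}$. The result is conjugation of the whole triple by $w(x,y)\,w(x,y)^{-1}w'(x,y)w(x,y)=w'(x,y)w(x,y)$. So $\beta_{w'}\circ\beta_w=\beta_{w'w}$. Hence $J(G)$ is closed under composition, $\beta_{w^{-1}}$ is the inverse of $\beta_w$, and $\beta_1=\id$.

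\textbf{$J(G)$ lies in $\langle\sigma_1,\sigma_4\rangle$.} The table gives $\sigma_3\circ\sigma_3=\beta_{w_1}=\beta_Y$ and $\sigma_5\circ\sigma_5=\beta_{w_2^{-1}}=\beta_{X^{-1}}$. Since $\sigma_3,\sigma_5\in\langle\sigma_1,\sigma_4\rangle$, the generators $\beta_X,\beta_Y$ of $J(G)$ lie in the group, and so does all of $J(G)$.

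\textbf{Normality.} For any $\sigma\in\{\sigma_1,\sigma_4\}$, conjugation by $\sigma$ should send $\beta_w$ to $\beta_{w'}$ for a suitable word $w'$. Take $\sigma_1$: let $(x',y',z')=\sigma_1(x,y,z)=(y,z,x)$. Then
\[
\sigma_1\beta_w\sigma_1^{-1}(x',y',z')=\sigma_1\beta_w(x,y,z)
\]
is $(x',y',z')$ conjugated by $w(x,y)=w(z',x')$, and $z'=(x'y')^{-1}$. So $\sigma_1\beta_w\sigma_1^{-1}=\beta_{w'}$ with $w'(X,Y)=w((XY)^{-1},X)$. The argument for $\sigma_4$ is the same: $\sigma_4(x,y,z)=(y,x,z^x)$, and $x$ and $y$ are recovered as words in the new first two entries. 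This is the one place where care is needed, because the $\sigma_i$ are not plain coordinate permutations. The key point is that each $\sigma_i$ is invertible and the original triple is expressible by words in the image triple.

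\textbf{The quotient is $S_3$.} Let $\mathrm{Sym}\{1,2,3\}$ act on positions. Define $\pi\colon\langle\sigma_1,\sigma_4\rangle\to S_3$ by sending $\sigma_1$ to the $3$-cycle and $\sigma_4$ to the transposition $(12)$, and more generally sending $\beta_w\circ\sigma_i$ to the position permutation underlying $\sigma_i$.

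To see that $\pi$ is well defined, use nilpotency. If $G$ is nilpotent then $G'\le\Phi(G)$, so on $\T(G)$ modulo $\Phi(G)$ every $\beta_w$ acts trivially and every $\sigma_i$ acts as a genuine coordinate permutation. For a generating triple, the images $\bar x,\bar y,\bar z$ are distinct in $G/\Phi(G)$: if two coincided, then since $\bar x\bar y\bar z=1$ all three would lie in one cyclic subgroup, so they could not generate. Hence the induced action on triples of $G/\Phi(G)$ is the faithful action of $S_3$ by permuting distinct coordinates. This gives a well-defined surjective homomorphism, because $\sigma_1,\sigma_4$ generate $S_3$.

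Its kernel contains $J(G)$. Conversely, by the earlier remark every element of $\langle\sigma_1,\sigma_4\rangle$ has the form $\beta_w\circ\sigma_i$. Such an element lies in the kernel only if $\sigma_i$ permutes coordinates trivially, i.e. $i=0$, so the element is $\beta_w\in J(G)$. Therefore the kernel is $J(G)$ and $\langle\sigma_1,\sigma_4\rangle/J(G)\cong S_3$.

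The main obstacle is this faithfulness step: ensuring that distinct coordinate patterns are genuinely distinguished, for which the distinctness of $\bar x,\bar y,\bar z$ in the nilpotent case is the crux.
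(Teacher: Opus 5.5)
Your proof is correct and follows essentially the paper's route: the composition rule $\beta_{w'}\circ\beta_w=\beta_{w'w}$, the identities $\sigma_3^2=\beta_Y$ and $\sigma_5^2=\beta_{X^{-1}}$ to place $J(G)$ inside $\langle\sigma_1,\sigma_4\rangle$, and explicit conjugation formulas for normality. You conjugate by $\sigma_1$ in the opposite direction to the paper, which is immaterial since $\sigma_1^3=\id$. The final step uses the same key fact as the paper: $G'\le\Phi(G)$, so the $\beta_w$ act trivially modulo $\Phi(G)$ while the $\sigma_i$ permute distinct entries. The difference is only in packaging. You build an explicit homomorphism onto $S_3$ and compute its kernel using the normal form $\beta_w\circ\sigma_i$. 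The paper instead uses the relations $\overline{\sigma_1}^3=\overline{\sigma_4}^2=\overline{1}$ and $\overline{\sigma_4}\,\overline{\sigma_1}\,\overline{\sigma_4}=\overline{\sigma_2}$, together with $\sigma_i\notin J(G)$. Like the paper, your distinctness step tacitly needs $G$ to be non-cyclic, which is harmless for Beauville groups.
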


\begin{proof}
Since $\sigma_3^2 (x, y, z)=(x, y, z)^{y}$ and $\sigma_5^2(x, y,z)=(x, y,z)^{x^{-1}}$, it then follows that
\[
L=\langle \beta_{w} \mid w \ \ \text{is a word in a set with two elements} \rangle.
\]
is a subgroup of $\langle \sigma_1, \sigma_4 \rangle=\langle \sigma_1, \cdots, \sigma_5 \rangle$.

We will show that $L$ coincides with $J(G)$.
To this purpose, take two elements $\beta_{w_1}, \beta_{w_2}$ from the generating set of $L$.
Then
\[
\beta_{w_2}  \circ \beta_{w_1}(x, y, z)= 
(x^{w_1(x, y)}, y^{w_1(x, y)}, z^{w_1(x, y)})^
{w_2(x^{w_1(x, y)}, y^{w_1(x, y)})}.
\]
Since $w_2(x^{w_1(x, y)}, y^{w_1(x, y)})=w_2(x, y)^{w_1(x, y)}$, it then follows that
$\beta_{w_2}\circ\beta_{w_1}=\beta_{w_2w_1} \in J(G)$.
Thus, $L=J(G)$.

We next show that $J(G)$ is normal in $\langle \sigma_1, \sigma_4 \rangle$. Let $\beta_w \in J(G)$. Recall that for any triple $(x ,y ,z)$ we have $z=y^{-1}x^{-1}$.

Now call 
$w'(X, Y)=w(Y, Y^{-1}X^{-1})$. 
Then we have 
\[
\sigma_2\circ\beta_w \circ \sigma_1=\beta_{w'}  \in J(G).
\]
Similarly, if we call $w''(X, Y)=w(Y, X)$, then
\[
\sigma_4\circ\beta_w \circ \sigma_4=\beta_{w''}  \in J(G).
\]
Thus, $J(G)\unlhd \langle \sigma_1, \sigma_4 \rangle$.

Now suppose that $G$ is a finite nilpotent group. Then we have $G'\leq \Phi(G)$. 
Let $(x, y, z)$ be a generating triple of $G$.
Then for every $\beta_{w} \in J(G)$, each component of $\beta_w(x, y, z)$ and the corresponding component of $(x, y, z)$ fall in the same maximal subgroup.
This implies that  $\sigma_1, \dots, \sigma_5$ do not fall  in $J(G)$.

If we use the bar notation in $\langle \sigma_1, \sigma_4 \rangle/J(G)$, we have
\[\langle \sigma_1, \sigma_4 \rangle/J(G)=\langle \overline{\sigma_1},  \overline{\sigma_4} \rangle
\]
and $\overline{\sigma_1}^3= \overline{\sigma_4}^2=\overline{1}$, and 
$ \overline{\sigma_3}^2= \overline{\sigma_5}^2=\overline{1}$. 
Also, since
\[
\sigma_4\circ\sigma_1\circ \sigma_4(x,y,z)=(z,x,y)^x
\]
we have $\overline{\sigma_4}\overline{\sigma_1}\overline{\sigma_4}=\overline{\sigma_2}$. Therefore, $\langle \sigma_1, \sigma_4 \rangle/J(G) \cong S_3$.
\end{proof}

We continue this section by examining the order of $A_{\U}(G)$ when $G$ is either a non-abelian metacyclic $p$-group or a $p$-group of nilpotency class $2$.

\begin{pro}
\label{intersections-metacyclic}
Let $G$ be a non-abelian metacyclic Beauville $p$-group.
Then we have the following:

\begin{enumerate}
	\item 
	$\DInn(G) \cap \langle \sigma_1, \sigma_4 \rangle=1$.
	
	\vspace{5pt}
	
	\item 
	$\big( I(G)\times I(G)\big) \cap \DDAut(G)
	= 
	\big\{ (\widetilde{\iota}_g, \widetilde{\iota}_g) \in \DInn(G)\times \DInn(G) \mid g \in G \big\}.$
\end{enumerate}
\end{pro}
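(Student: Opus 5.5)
The plan is to treat both parts by acting on a single well-chosen generating triple and comparing components modulo $\Phi(G)$, and then, when that is not enough, comparing components exactly.

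\textbf{Part (1).} Take $\mu \in \DInn(G)\cap\langle\sigma_1,\sigma_4\rangle$. By the earlier remark, $\mu=\widetilde{\iota}_g=\beta_w\circ\sigma_i$ for some word $w$ and some $0\le i\le 5$.

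First, evaluate on a generating triple $(x,y,z)$ with $x,y,z$ in three distinct maximal subgroups. Such triples exist, and by \cref{all B.S.-rmk} every generating triple can be used.
\begin{itemize}
\item $\widetilde{\iota}_g$ preserves each component modulo $\Phi(G)$, because $G'\le\Phi(G)$.
\item $\beta_w$ also preserves each component modulo $\Phi(G)$.
\item Hence $\sigma_i$ must preserve each component modulo $\Phi(G)$, which forces $i=0$.
\end{itemize}

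So $\mu=\beta_w=\widetilde{\iota}_g$. That is, for every generating pair $(x,y)$ we have $x^{w(x,y)}=x^g$ and $y^{w(x,y)}=y^g$, with $g$ fixed. Hence $w(x,y)g^{-1}\in C_G(x)\cap C_G(y)=Z(G)$. Therefore $\beta_w$ acts on every triple as conjugation by the fixed element $g$, up to the centre.

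The remaining task is to show that $g\in Z(G)$, so that $\mu=1$. I would test this on the triples $(x,y,z)=(a,b,\cdot)$ and $(b,a,\cdot)$, and more generally on $(a^n b^{mp^{e-i}}, b^{1+rp^{e-i}}a^s)$. Such a pair is the image of $(a,b)$ under an automorphism $\theta$, so
\[
\beta_w(\theta(a),\theta(b))=\theta\bigl(\beta_w(a,b)\bigr)=\bigl(\theta(a),\theta(b)\bigr)^{\theta(w(a,b))}.
\]
Write $u=w(a,b)$, so that $u\in gZ(G)$. The condition becomes $\theta(u)\in gZ(G)=uZ(G)$ for all $\theta\in\Aut(G)$. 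In other words, $uZ(G)$ is fixed by all automorphisms.

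I would then use \cref{automorphism-powerful} and \cref{center-powerful}: the only element of $G/Z(G)$ fixed by every automorphism, including $a\mapsto a^n$ with $n\not\equiv1$, is trivial. Indeed, $G/Z(G)$ is abelian of type $(p^{e-i},p^{e-i})$ and generated by the images of $a,b$. The maps $a\mapsto a^n$, $b\mapsto b$ and $a\mapsto a$, $b\mapsto ba$ together kill any fixed coset. This gives $u\in Z(G)$, so $g\in Z(G)$ and $\widetilde{\iota}_g=1$.

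\textbf{Part (2).} The inclusion $\supseteq$ is immediate. For $\subseteq$, suppose
\[
(\widetilde\alpha,\widetilde\alpha)=(\widetilde{\iota}_g\beta_w\sigma_i,\ \widetilde{\iota}_h\beta_{w'}\sigma_j).
\]
Applying $\widetilde{\iota}_g^{-1}\circ\widetilde\alpha$ shows that $\widetilde{\iota}_g^{-1}\widetilde\alpha$ lies in $\DAut(G)\cap\langle\sigma_1,\sigma_4\rangle$, and similarly for $h$.

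I would first show that $\DAut(G)\cap\langle\sigma_1,\sigma_4\rangle$ lies in $\DInn(G)$. The image modulo $\Phi(G)$ is a permutation of components induced by a linear map of $G/\Phi(G)$. On the triple $(x,y,z)$, $\sigma_i$ must then be induced by a matrix in the group $A$ of \cref{powerful-elements of Out(G)}. This group fixes the line of $a$, up to scalar. Choosing the triple so that none of $x,y,z$ lies in $\langle a\rangle\Phi(G)$, and varying it, forces $i=0$.

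Then $\widetilde\alpha=\widetilde{\iota}_g\beta_w$. Applied to $(a,b)$, this shows that $\alpha$ agrees with an inner automorphism times conjugation by the element $w(a,b)$, that is, $\alpha=\iota_{g'}$ with $g'=w(a,b)g$. Consequently, by part (1), $\beta_w=\widetilde{\iota}_{g'}\widetilde{\iota}_g^{-1}$ is trivial. The same holds on the second coordinate, giving $\alpha=\iota_g=\iota_h$.

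\textbf{Main obstacle.} The main obstacle is the claim that a uniform $\beta_w$ which equals an inner automorphism must be trivial. This is the step that requires the automorphism-invariance argument modulo $Z(G)$.
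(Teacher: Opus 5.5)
Your proof is correct, but at the key step it takes a different route from the paper.

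\textbf{How the routes differ.} The paper treats (1) and (2) together. After forcing $i=0$ it has $\beta_w=\widetilde{\gamma}$, and it then dissects the word itself. It writes $w=X^rY^sc(X,Y)$ and compares the triples built on $(a,b)$, $(b,a)$, $(ab,b)$ and $(a^{-1},b)$. This gives successively $p^{e-i}\mid s-r$, then $p^{e-i}\mid r$, then that the commutator part is central, hence $\gamma=\id_G$.

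You never open up $w$. Since $w(\theta(x),\theta(y))=\theta(w(x,y))$, the map $\beta_w$ commutes with $\DAut(G)$. So $\beta_w=\widetilde{\iota}_g$ forces $gZ(G)$ to be fixed by all of $\Aut(G)$, and two automorphisms from \cref{automorphism-powerful} then kill it. In fact (1) does not even need the reduction to $i=0$: by the paper's remark, $\DAut(G)$ commutes with $\sigma_1,\sigma_4$, so $\widetilde{\iota}_g\in\langle\sigma_1,\sigma_4\rangle$ already gives $\iota_g\in Z(\Aut(G))$.

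In (2) you gain more. Once $i=0$, evaluating $\widetilde{\alpha}=\widetilde{\iota}_g\circ\beta_w$ on the single pair $(a,b)$ shows that $\alpha=\iota_g\circ\iota_{w(a,b)}$ is inner, which is all (2) asks. So (2) needs no centre computation at all, whereas the paper reruns the whole word analysis with $\gamma=\iota_{g^{-1}}\alpha$. Your route is shorter and avoids the paper's rather informal handling of the commutator part of $w$. The paper's route uses only $Z(G)$ and explicit triples, not the automorphism group. Your exclusion of $i\neq 0$ in (2) works for the same reason as the paper's: one fixed $\gamma$ cannot induce the same coordinate permutation on two suitably chosen triples. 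The matrix group $A$ is not really needed there.

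\textbf{A false justification.} $G/Z(G)$ is not abelian in general. Since $G'=\langle a^{p^i}\rangle$ and $Z(G)=\langle a^{p^{e-i}},b^{p^{e-i}}\rangle$, the quotient is abelian only when $e\le 2i$; for $e=3$, $i=1$ it is non-abelian of order $p^4$. The conclusion you need still holds with your two automorphisms, but it takes one line of computation:
\begin{itemize}
\item Write $u=b^\beta a^\alpha$. The automorphism $a\mapsto a^2$, $b\mapsto b$ gives $u^{-1}\theta(u)=a^{\alpha}$. Hence $p^{e-i}\mid\alpha$ and $uZ(G)=b^\beta Z(G)$.
\item The automorphism $a\mapsto a$, $b\mapsto ba$ gives $b^{-\beta}(ba)^\beta=a^{N}$ with $N=\sum_{k=0}^{\beta-1}(1+p^i)^k$.
\item Since $p$ is odd, $v_p(N)=v_p(\beta)$. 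So $p^{e-i}\mid\beta$, and $u\in Z(G)$ as required.
\end{itemize}
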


\begin{proof}
Recall that, by \cref{presentation-metacyclic},
\[
G=\langle a,b \mid a^{p^e}=b^{p^e}=1, [a,b]=a^{p^i} \rangle
\]
where $p\geq 5$ and $1\leq i\leq e-1$.
Also we know that, by \cref{all B.S.-rmk}, every geneating triple for $G$ appears in some Beauville structure for $G$.

We will prove (i) and (ii) simultaneously.
Let $\psi \in \DInn(G) \cap \langle \sigma_1, \sigma_4 \rangle$ and 
let $( \phi, \rho) \in \big( I(G)\times I(G)\big) \cap \DDAut(G)$. 
Then we write 
\[\psi=\beta_{w} \circ \sigma_i=\widetilde{\iota}_h
\]
for some $h\in G$ and for some $0\leq i \leq 5$. 
Similarly, we write 
\[
\phi= \widetilde{\iota}_g \circ \beta_{w'} \circ \sigma_j= \widetilde{\alpha}
\]
for some $\alpha \in \Aut(G)$ and  for some $0\leq j \leq 5$. 
In order to simply the proof we may assume that $i=j$ and $w=w'$.
Then we have
$\beta_w\circ \sigma_i=\widetilde{\iota}_{g^{-1}} \circ\widetilde{\alpha}$ in case (ii).

Now we call $\widetilde{\gamma}=\widetilde{\iota}_h$ in (i), and $\widetilde{\gamma}=\widetilde{\iota}_{g^{-1}} \circ\widetilde{\alpha}$ in (ii). 
This implies that $\beta_w\circ \sigma_i=\widetilde{\gamma}$ in both cases.

We will first show that $i=0$. Now consider the generating triple $T_1=(a,b, (ab)^{-1})$. Then in (i), we have 
\[
\beta_w\big(\sigma_i\left(a, b, (ab)^{-1}\right) \big)=\widetilde{\gamma}(a, b, (ab)^{-1}),
\]
where $\gamma$ is an inner automorphism.
Since $G'\leq \Phi(G)$, and $a, b$ and $(ab)^{-1}$ fall into three distinct maximal subgroups of $G$, it then follows that $\sigma_i=\sigma_0$. 

Assume now that we are in case (ii). Observe that $T_2=(b,a,(ba)^{-1})$ is a generating triple for $G$.

Suppose that $i=1$.
Then the equality $\beta_w\circ \sigma_i=\widetilde{\gamma}$ implies that
\[
\left(b, (ab)^{-1}, a\right)^{w(b, (ab)^{-1})}=\widetilde{\gamma}\left(a, b, (ab)^{-1}\right)
\]
and
\[
\left(a, (ba)^{-1}, b \right)^{w(a, (ba)^{-1})}= \widetilde{\gamma}\left(b, a, (ba)^{-1}\right),
\]
and so  $\gamma(a)=b^{w\left(b, (ab)^{-1}\right)}=\left((ba)^{-1}\right)^{w\left(a, (ba)^{-1}\right)}$ which is not possible since $a, b$ and $(ab)^{-1}$ fall into three distinct maximal subgroups and $G'\leq \Phi(G)$.
Similarly, we have $i\neq 2, 3, 5$. 

Next assume that $i=4$.
Now consider the generating triple $T_3=\big(b^{-1}, ab, (a^{-1})^{b} \big)$ for $G$.
Then we have
\[
\left(b, a, (ba)^{-1}\right)^{w(b, a)}=\widetilde{\gamma}\left(a, b, (ab)^{-1}\right)
\]
and
\[
\left(ab, b^{-1}, a^{-1}\right)^{w\left(ab,  b^{-1}\right)}
	= \widetilde{\gamma}\left(b^{-1}, ab, (a^{-1})^{b}\right).
\]
Thus $\gamma(b)=a^{w(b,a)}=\left((ab)^{-1}\right)^{w(ab, b^{-1})}$, which is again not possible. Therefore $i=0$.

Hence in both cases we have $\beta_w=\widetilde{\gamma}$.

Now our claim is that for any generating pair $(x, y)$ of $G$, we have $w(x, y) \in Z(G)$. 
Since $w$ is a word in $X$ and $Y$, we write $w(X, Y)= X^rY^s c(X,Y)$ where $c(X,Y)$ is a product of commutators in $X$ and $Y$ and $r, s \in \Z$.

By using $T_1$ and $T_2$, the equality $\beta_w=\widetilde{\gamma}$ implies that
\[
(a, b, (ab)^{-1})^{w(a, b)}=\widetilde{\gamma}(a, b, (ab)^{-1}) \ \
\text{and}
\ \
(b, a, (ba)^{-1})^{w(b, a)}= \widetilde{\gamma}(b, a, (ba)^{-1}).
\]
Hence
\[
w(a, b) \equiv w(b, a) \pmod{Z(G)},
\]
where 
$w(a, b)=b^s a^r a^{p^it_1}$ and $w(b, a)=b^ra^s a^{p^it_2}$ for some $1\leq t_1, t_2 \leq p^{e-i}$. 
Then 
\[
b^{s-r}\equiv a^{s-r}a^{p^i(t_2-t_1)} \pmod{Z(G)},
\]
where $Z(G)=\langle  a^{p^{e-i}}, b^{p^{e-i}} \rangle$ by \cref{center-powerful}. 
Thus $p^{e-i} \mid s-r$. 
Since $p^{e-i}$th power of all elements of $G$ are central, we may assume that  $r=s$ and $1\leq r \leq p^{e-i}$.

Next consider another generating triple  $T_3=(ab, b, (ab^2)^{-1})$ for $G$. Then  we get
\[
(ab, b, (ab^2)^{-1})^{w(ab, b)}= \widetilde{\gamma}(ab, b, (ab^2)^{-1}).
\]
This implies that $\gamma(b)=b^{w(ab, b)}$ 
and $\gamma(a)=a^{w(ab, b)}$. Consequently, we also have
\[
w(ab, b)\equiv w(a,b)\pmod{Z(G)},
\]
where $w(ab, b)= b^{2r}a^r a^{p^i t_3}$ for some $1\leq t_3 \leq p^{e-i}$. 
Then we get
\[
b^r \equiv a^{p^i(t_1-t_3)} \pmod{Z(G)},
\]
and hence $p^{e-i} \mid r$. Thus, we may assume that $w(X, Y)= c(X, Y)$ is a product of commutators in $X$ and $Y$.

Now we take the generating triple
$T_4=(a^{-1}, b, b^{-1}a)$. 
Then
\[
(a^{-1}, b, b^{-1}a)^{w(a^{-1}, b)}
= \widetilde{\gamma}(a^{-1}, b, b^{-1}a).
\]
Thus $w(a, b) \equiv w(a^{-1}, b) \pmod{Z(G)}$. 

On the other hand, since $G'=\langle a^{p^i}\rangle$ is abelian, this yields that
\[
w(a, b)=\prod_{j=0}^{n-2}\ [[a,b], b, \overset{j}{\cdots}, b]^{\ell_j},
\]
where $\ell_j\in \Z$ and $n=\cl(G)$.
Observe also that
\[
w(a^{-1}, b)=\prod_{j=0}^{n-2}\ [[a,b], b, \overset{j}{\cdots}, b]^{-\ell_j}.
\]
Thus, the congruence  $w(a, b) \equiv w(a^{-1}, b) \pmod{Z(G)}$ implies
that 
\[
\prod_{j=0}^{n-2} \ [[a,b], b, \overset{j}{\cdots}, b]^{2\ell_j}\in Z(G).
\]
Since $p$ is odd, we conclude that $w(a, b) \in Z(G)$, and hence
\[
(a, b, b^{-1}a^{-1})^{w(a, b)}= (a, b, b^{-1}a^{-1})
=\widetilde{\gamma}(a, b, b^{-1}a^{-1}).
\]
Consequently, $\gamma=\id_G$. 

Hence, this means that $\iota_h=\id_G$ in case (i), and  $\alpha=\iota_g$ in case (ii).
This completes the proof.
\end{proof}

\begin{pro}
	\label{intersections-class 2}
	Let $G$ be a Beauville $p$-group of class $2$.
	Then 
	\begin{enumerate}
		\item 
		$\DInn(G) \cap \langle \tau_1, \tau_3 \rangle=1$,
		\item 
		$\big( I(G)\times I(G)\big) \cap \DDAut(G)
		= 
		\big\{ (\widetilde{\iota}_g, \widetilde{\iota}_g) \in \DInn(G)\times \DInn(G) \mid g \in G \big\}.$
	\end{enumerate}
\end{pro}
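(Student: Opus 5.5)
We follow the proof of \cref{intersections-metacyclic} step by step. Here $\langle \tau_1,\tau_3\rangle$ is read as $\langle \sigma_1,\sigma_4\rangle$. Both groups to consider are given by \eqref{presentation-class2-case2} with $0<k<j$, or by \eqref{presentation-class2-case3}. We treat (i) and (ii) together. Take $\psi=\beta_w\circ\sigma_i=\widetilde{\iota}_h$ in case (i), and $\widetilde{\iota}_g\circ\beta_w\circ\sigma_i=\widetilde{\alpha}$ in case (ii). Both reduce to a single equation $\beta_w\circ\sigma_i=\widetilde{\gamma}$ with $\gamma\in\Aut(G)$. By \cref{all B.S.-rmk}, every generating triple of $G$ lies in some Beauville structure, so we may test this equation on any generating triple.

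\textbf{Step 1: show $i=0$.} The argument of \cref{intersections-metacyclic} carries over unchanged, since it uses only $G'\le\Phi(G)$ and the fact that the three entries of a generating triple lie in distinct maximal subgroups. In case (i), compare the components of $(a,b,(ab)^{-1})$; since $\gamma$ is inner it fixes maximal subgroups. In case (ii), use $T_2=(b,a,(ba)^{-1})$ to rule out $i=1,2,3,5$, and $T_3=(b^{-1},ab,(a^{-1})^b)$ to rule out $i=4$. Each of these cases would force $\gamma(a)$ or $\gamma(b)$ to lie in two different maximal subgroups at once.

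\textbf{Step 2: show $w(x,y)\in Z(G)$ for every generating pair.} Write $w(X,Y)=X^rY^sc(X,Y)$ with $c$ a product of commutators. In class $2$ we have $c(X,Y)=[X,Y]^{\ell}$, and $w(x,y)\equiv x^ry^s \pmod{G'}$. Comparing $T_1=(a,b,\cdot)$ with $T_2=(b,a,\cdot)$ gives $w(a,b)\equiv w(b,a)\pmod{Z(G)}$, so $a^{r-s}b^{s-r}\in Z(G)G'=Z(G)$. By \cref{class2-case2-center} and \cref{class2-case3-center}, $Z(G)=G^{p^j}G'$, so $p^j\mid r-s$ and we may take $r=s$. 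Comparing $(a,b,\cdot)$ with $(ab,b,\cdot)$ gives $(ab)^rb^r\equiv a^rb^r$, hence $b^r\in Z(G)$ and $p^j\mid r$. Thus $w(x,y)$ differs from a commutator word $[x,y]^\ell$ only by central factors, and this central part acts trivially by conjugation. Finally, compare $(a,b,\cdot)$ with $(a^{-1},b,\cdot)$. Since $[a^{-1},b]=[a,b]^{-1}$ in class $2$, this gives $[a,b]^{2\ell}\in Z(G)$. This is automatic, because $G'\le Z(G)$; in fact, in class $2$ every commutator already acts trivially, so $\beta_w$ is the identity on all triples.

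\textbf{Step 3: conclude.} By Steps 1 and 2, $\widetilde{\gamma}$ fixes $(a,b,(ab)^{-1})$, so $\gamma=\id_G$. Hence $\iota_h=\id$ in case (i), and $\alpha=\iota_g$ in case (ii). The same reasoning applied to the second coordinate $\rho$ gives the same inner automorphism, since $(\widetilde{\alpha},\widetilde{\alpha})$ has equal coordinates. The reverse inclusion in (ii) is clear.

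The main obstacle is Step 2. The center here is $G^{p^j}G'$ rather than $\langle a^{p^{e-i}},b^{p^{e-i}}\rangle$, so the congruences must be taken modulo $p^j$. In the case $0<k<j$ there is also the relation $b^{p^i}=[b,a]^{p^k}$, and one must check that $a^{u}b^{v}\in G^{p^j}G'$ really forces $p^j\mid u$ and $p^j\mid v$. This holds since $G/G'\cong C_{p^e}\times C_{p^i}$ with $j\le i$.
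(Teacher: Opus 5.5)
Your proof is correct and is essentially the paper's. Both reduce to $\beta_w=\widetilde{\gamma}$ as in the metacyclic case, write $w(x,y)\equiv x^ry^s$ modulo the central subgroup $G'$, and compare swapped triples to force $r\equiv s$ modulo the exponent of $G/Z(G)$; the case $k=0$, which you omit, is metacyclic and already covered by the metacyclic proposition.

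The remaining differences are cosmetic. The paper uses the abstract fact $G/Z(G)\cong C_{p^m}\times C_{p^m}$ instead of the explicit centers $Z(G)=G^{p^j}G'$. It also finishes with the triples $(xy,y^{-1},x^{-1})$ and $(x^{-1},xy,y^{-1})$ to read off $\gamma(x)=x$ and $\gamma(y)=y$ directly, whereas you reuse $(ab,b,(ab^2)^{-1})$ to get $p^j\mid r$ and hence that $w$ is central.
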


\begin{proof}
	First of all, note that since $G$ is of class $2$,  we have $G/Z(G)\cong C_{p^m} \times C_{p^m}$ where $\exp G'=p^m$. 
	Recall that by \cref{all B.S.-rmk}, every generating triple for $G$ appears in some Beauville structure for $G$.
	
	We are going to prove (i) and (ii) simultaneously.
	We will use the same arguments in the proof of  \cref{intersections-metacyclic}.
	Then one can show that in both cases $\beta_w=\widetilde{\gamma}$, where $\widetilde{\gamma}=\widetilde{\iota}_h$ in (i), and $\widetilde{\gamma}=\widetilde{\iota}_{g^{-1}} \circ\widetilde{\alpha}$ in (ii).

	Since $G$ is of class $2$, we can assume that $w(x, y)=x^iy^j$ with $1 \leq i, j \leq p^m$. 
	Then by using the generating triples $T_1=(x,y,z)$ and $T_2=\left( y,x, (yx)^{-1}\right)$ we get 
	\[
	x^iy^j\equiv x^jy^i \pmod{Z(G)},
	\]
	that is $x^{i-j}\equiv y^{i-j} \pmod{Z(G)}$.
	Thus we have $i=j$. 
	Finally we consider the generating triples $T_3=(xy, y^{-1}, x^{-1})$ and $T_4=(x^{-1}, xy, y^{-1})$.
	Then we have
	\[
	(xy, y^{-1}, x^{-1})^{x^i}=\widetilde{\gamma}(xy, y^{-1}, x^{-1}) \ \
	\text{and}
	\ \
	(x^{-1}, xy, y^{-1})^{y^i}=\widetilde{\gamma}(x^{-1}, xy, y^{-1}).
	\]
	Thus $\gamma(x)=x=x^{y^i}$ and $\gamma(y)=y=y^{x^i}$.
	Hence we have $x^i, y^i \in Z(G)$. 
	Then we conclude that $w( x, y)=x^iy^j \in Z(G)$ for any generating pair $(x,y)$ of $G$.
	Consequently,  $\gamma=\id_G$.
	This completes the proof. 
\end{proof}

We close this section by giving a formula for the order of $A_{\U}(G)$.

\begin{lem}
	\label{order of the group}
	Let $G$ be a non-abelian metacyclic Beauville $p$-group or a Beauville $p$-groups of class $2$.
	Then
	\[
	|\Au(G)|=72|J(G)|^2|\Aut(G)||\Inn(G)|.
	\]
\end{lem}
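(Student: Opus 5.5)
We compute $|\Au(G)|$ by peeling off the layers of its construction. First, \cref{index of B(G)} gives $|\Au(G)|=2|B(G)|$, since $G$ is nilpotent. Next, $B(G)=\big(I(G)\times I(G)\big)\DDAut(G)$ with $I(G)\times I(G)$ normal in $B(G)$. Hence
\[
|B(G)|=\frac{|I(G)|^2\,|\DDAut(G)|}{\big|\big(I(G)\times I(G)\big)\cap \DDAut(G)\big|}.
\]
Since $\alpha\mapsto\widetilde{\alpha}$ is injective, $|\DDAut(G)|=|\Aut(G)|$. By part (ii) of \cref{intersections-metacyclic} (metacyclic case) or \cref{intersections-class 2} (class $2$ case), the intersection is the diagonal copy of $\DInn(G)$, which has order $|\Inn(G)|$. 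This gives
\[
|B(G)|=|I(G)|^2\,\frac{|\Aut(G)|}{|\Inn(G)|}.
\]

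Then I compute $|I(G)|$. We have $I(G)=\DInn(G)\langle\sigma_1,\sigma_4\rangle$, and $\DInn(G)$ is normalized by $\langle\sigma_1,\sigma_4\rangle$ because $\DAut(G)$ commutes with every $\sigma_i$. Part (i) of the same propositions gives $\DInn(G)\cap\langle\sigma_1,\sigma_4\rangle=1$, so $|I(G)|=|\Inn(G)|\,|\langle\sigma_1,\sigma_4\rangle|$. By \cref{quotient by J}, $J(G)\unlhd\langle\sigma_1,\sigma_4\rangle$ with quotient $S_3$, so $|\langle\sigma_1,\sigma_4\rangle|=6|J(G)|$. Therefore $|I(G)|=6|J(G)||\Inn(G)|$.

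Combining these,
\[
|\Au(G)|=2\cdot 36\,|J(G)|^2|\Inn(G)|^2\cdot\frac{|\Aut(G)|}{|\Inn(G)|}=72\,|J(G)|^2|\Aut(G)||\Inn(G)|.
\]

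All the counts are of permutation groups acting on $\U(G)$, not on $\T(G)$, so one must check that the orders are computed faithfully there. The subtle point is that $\DInn(G)$, $\DDAut(G)$ and $\langle\sigma_1,\sigma_4\rangle$ must act faithfully on $\U(G)$ as a factor or as a pair. By \cref{all B.S.-rmk}, every generating triple occurs in some Beauville structure, so an element acting trivially on all $\U(G)$ acts trivially on all of $\T(G)$. In particular the diagonal action of a nontrivial automorphism is nontrivial on $\U(G)$, since some generating pair is moved. The main obstacle is the notational mismatch in \cref{intersections-class 2}(i), which is stated with $\langle\tau_1,\tau_3\rangle$. I would read this as $\langle\sigma_1,\sigma_4\rangle$, as in the metacyclic case, and note that its proof indeed treats that group.
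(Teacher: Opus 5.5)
Your proof is correct and follows the same route as the paper: $|\Au(G)|=2|B(G)|$ by \cref{index of B(G)}, then the product formula for $B(G)=\big(I(G)\times I(G)\big)\DDAut(G)$ with the intersection supplied by part (ii) of \cref{intersections-metacyclic} and \cref{intersections-class 2}, and finally $|I(G)|=6|J(G)||\Inn(G)|$ from part (i) together with \cref{quotient by J}. You also make explicit two points the paper leaves implicit, namely the use of part (i) to compute $|I(G)|$ and the faithfulness of these actions on $\U(G)$ via \cref{all B.S.-rmk}; your reading of $\langle\tau_1,\tau_3\rangle$ as $\langle\sigma_1,\sigma_4\rangle$ is the intended one.
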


\begin{proof}
	We know that $|\Au(G)|=2|B(G)|$ by \cref{index of B(G)}.
	Also by \cref{quotient by J},  we have
	$|\langle \sigma_1, \sigma_4\rangle|=6|J(G)|$.
	Then  \cref{intersections-metacyclic} and \cref{intersections-class 2} imply that
	\begin{align*}
		2|B(G)|&=2\frac{|I(G)|^2 |\Aut(G)|}{|\Inn(G)|}\\
		&=72 |J(G)|^2|\Aut(G)||\Inn(G)|.
	\end{align*}
\end{proof}


\section{Proofs of the Main Results}
\label{final results}

In this section, we will prove the main theorems. To this purpose, we will start with an easy lemma.

\vspace{5pt}

Note that if $G$ is a $2$-generator finite $p$-group and $\alpha \in \Aut(G)$, then the map
\begin{equation} 
	\label{induced automorphism}
	\begin{split}
	f\colon \Aut(G) & \longrightarrow \Aut(C_p\times C_p)\\ 
	\alpha & \longmapsto \alpha'
\end{split}
\end{equation}
is a homomorphism.

\begin{lem}
\label{inherited stabilizer}
Let $G$ be a Beauville $p$-group and let $t$ be a Beauville structure for $G$ which is inherited by $G/\Phi(G)$. If we use the bar notation in $G/\Phi(G)$, and
\[
(\widetilde{\alpha}\circ\beta_w \circ\sigma_i, \
\widetilde{\alpha}\circ\widetilde{\iota}_g\circ\beta_{w'}\circ\sigma_j ) \in \Stab_{\Au(G)}(t)
\]
for some $\alpha \in \Aut(G)$, $g\in G$, then

\[
(\widetilde{\alpha'}\circ\sigma_i, \ \widetilde{\alpha'}\circ\sigma_j ) \in \Stab_{\Au(C_p\times C_p)}(\overline{t})
\] 
where $\alpha'$ is the image of $\alpha$ under the map $f$ defined in \cref{induced automorphism}.
\end{lem}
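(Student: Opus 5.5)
The plan is to reduce modulo $\Phi(G)$ and use the fact that every word-conjugation $\beta_w$ and every inner automorphism becomes trivial in $G/\Phi(G)\cong C_p\times C_p$.

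First we set up the reduction. Since $\Phi(G)$ is characteristic, each $\alpha\in\Aut(G)$ induces $\alpha'=f(\alpha)$ on $\overline{G}=G/\Phi(G)$. Moreover $\overline{\alpha(x)}=\alpha'(\overline{x})$ for every $x\in G$. So for any triple $T=(x,y,z)$ we get $\overline{\widetilde{\alpha}(T)}=\widetilde{\alpha'}(\overline{T})$, where the bar is applied componentwise.

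Next we handle the conjugations. Since $G$ is a $p$-group, $G'\le\Phi(G)$, so $\overline{G}$ is abelian. Hence for any word $w$ and any $g\in G$ we have $\overline{x^{w(x,y)}}=\overline{x}$ and $\overline{x^g}=\overline{x}$. It follows that $\overline{\beta_w(T)}=\overline{T}$ and $\overline{\widetilde{\iota}_g(T)}=\overline{T}$.

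Then we handle the permutations. The maps $\sigma_3,\sigma_4,\sigma_5$ involve a conjugation in one coordinate, for instance $x^y$. Modulo $\Phi(G)$ this conjugation disappears, so $\overline{\sigma_i(T)}$ equals $\sigma_i(\overline{T})$ computed in $\overline{G}$; there $\sigma_3(\bar x,\bar y,\bar z)=(\bar z,\bar y,\bar x)$, which agrees with the definition because conjugation is trivial in the abelian group. We also need the bar to commute with applying $\sigma_i$ in $G$, which is immediate from the formulas. Combining these three steps gives
\[
\overline{\widetilde{\alpha}\circ\widetilde{\iota}_g\circ\beta_{w}\circ\sigma_i(T)}=\widetilde{\alpha'}\circ\sigma_i(\overline{T}).
\]

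Now we apply this coordinatewise. Write $t=(T_1,T_2)$. The hypothesis that the given pair stabilises $t$ means
\[
\widetilde{\alpha}\circ\beta_w\circ\sigma_i(T_1)=T_1 \quad\text{and}\quad \widetilde{\alpha}\circ\widetilde{\iota}_g\circ\beta_{w'}\circ\sigma_j(T_2)=T_2.
\]
Reducing both equalities modulo $\Phi(G)$ gives $\widetilde{\alpha'}\circ\sigma_i(\overline{T_1})=\overline{T_1}$ and $\widetilde{\alpha'}\circ\sigma_j(\overline{T_2})=\overline{T_2}$.

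It remains to check that the reduced pair lies in $\Au(C_p\times C_p)$ and acts on $\overline{t}$. Because $t$ is inherited by $G/\Phi(G)$, the pair $\overline{t}$ is a Beauville structure for $C_p\times C_p$, so $\overline{t}\in\U(C_p\times C_p)$. The element $(\widetilde{\alpha'}\circ\sigma_i,\widetilde{\alpha'}\circ\sigma_j)$ lies in $(I\times I)\,\DDAut$ for $C_p\times C_p$, hence in $\Au(C_p\times C_p)$. Therefore it belongs to $\Stab_{\Au(C_p\times C_p)}(\overline{t})$.

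The only point needing care is the compatibility of $\sigma_3,\sigma_4,\sigma_5$ with reduction, and this is settled by the abelianness of $\overline{G}$. I expect no real obstacle beyond this bookkeeping.
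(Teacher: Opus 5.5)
Your proof is correct and follows essentially the same route as the paper: you reduce the two stabilizer equations modulo $\Phi(G)$, where $\overline{G}$ is abelian, so every conjugation (from $\beta_w$, $\widetilde{\iota}_g$ and the twisted coordinate in $\sigma_3,\sigma_4,\sigma_5$) disappears and $\widetilde{\alpha}$ becomes $\widetilde{\alpha'}$. Your version is more explicit than the paper's, which rewrites the equations as $\beta_w\circ\sigma_i(T_1)=\widetilde{\alpha}^{-1}(T_1)$ and passes to the quotient without saying why the conjugations vanish, and which does not check that the reduced pair lies in $\Au(C_p\times C_p)$.
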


\begin{proof}
	Let $t=(T_1, T_2)$.
	Since
	$(\widetilde{\alpha}\circ\beta_w \circ\sigma_i, \
	\widetilde{\alpha}\circ\widetilde{\iota}_g\circ\beta_{w'}\circ\sigma_j ) \in \Stab_{\Au(G)}(t)$, 
	it then follows that
	\[
	\beta_w\circ\sigma_i(T_1)=\widetilde{\alpha}^{-1}(T_1) \ \
	\text{and}
	\ \
	\beta_{w'}\circ\sigma_j(T_2)= \widetilde{\iota}_{g^{-1}}\circ\widetilde{\alpha}^{-1}(T_2).
	\]
	Since $\overline{t}$ is a Beauville structure for $\overline{G}\cong C_p \times C_p$, we have
	\[
	\sigma_i(\overline{T}_1)
	=
	\widetilde{\alpha'}^{-1}(\overline{T}_1)\ \
	\text{and}
	\ \
	\sigma_j(\overline{T}_2)
	=
\widetilde{\alpha'}^{-1}	(\overline{T}_2).
	\]
	As a consequence, we have
	$(\widetilde{\alpha'}\circ\sigma_i, \ \widetilde{\alpha'}\circ\sigma_j ) \in \Stab_{\Au(C_p\times C_p)}(\overline{t})$.
\end{proof}

The following theorem is crucial for the proof of Theorem A.

\begin{thm}
	\label{stabilizer in B(G)}
		Let $p\geq 5$ be a prime and let $G$ be a Beauville $p$-group given by the
	following presentation:
	\begin{equation*}
		G=\langle a,b \mid a^{p^e}=b^{p^e}=[b,a]^{p^j}=[b,a,b]=[b,a,a]=1 \rangle,
	\end{equation*}
	where $0<j\leq e$.
	Let 
	\[
	t=
	\Big( \big(x,y, y^{-1}x^{-1}\big), \big(x^ky^{\ell}c, x^my^nd,  (x^my^nd)^{-1}(x^ky^{\ell}c)^{-1}\big)\Big)
	\]
	be a Beauville structure for $G$ where
	$c, d \in G'$ and $1\leq k, \ell, m, n \leq p^e$. 
	Then for some $\alpha \in \Aut(G)$ and for some $h\in G$, we have
	\begin{equation}
		\label{stabilizers in B(G)-case distinction}
		\Stab_{B(G)}(t) =
		\begin{cases}
			S\langle (\widetilde{\alpha}\circ\sigma_2, \widetilde{\alpha}\circ\widetilde{\iota}_{h}\circ\sigma_2) \rangle
			&\text{if \ $\ell+m\equiv 0\pmod{p^e}$ and}
			\\
			&n+\ell-k \equiv 0 \pmod{p^e},
			\\ \\
			S\langle (\widetilde{\alpha}\circ\sigma_2, \widetilde{\alpha}\circ\widetilde{\iota}_{h}\circ\sigma_1)\rangle
			&\text{if \ $k+n \equiv 0\pmod{p^e}$ and}
			\\
			&k+m-\ell \equiv 0 \pmod{p^e} ,
			\\ \\
			S
			&\text{otherwise},
		\end{cases}
	\end{equation}
	where
	$S=
	\big\{ 
	\big(\widetilde{\iota}_{w(x,y)^{-1}}\circ  \beta_w, \ \widetilde{\iota}_{w'(x^ky^{\ell}c,\ x^my^nd)^{-1}}\circ\beta_{w'}  \big)
	\mid
	\beta_w, \beta_{w'} \in J(G)
	\big \}$.
\end{thm}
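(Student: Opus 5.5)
The plan is to first describe an arbitrary element of $B(G)$ in normal form, then reduce the stabilizer condition modulo $\Phi(G)$ to a problem in $C_p\times C_p$, and finally lift back. By the remarks in \cref{the group}, every element of $B(G)=(I(G)\times I(G))\DDAut(G)$ can be written as
\[
(\widetilde{\alpha}\circ\widetilde{\iota}_{g_1}\circ\beta_w\circ\sigma_i,\ \widetilde{\alpha}\circ\widetilde{\iota}_{g_2}\circ\beta_{w'}\circ\sigma_j),
\]
with $\alpha\in\Aut(G)$, $g_1,g_2\in G$, $0\le i,j\le 5$. Absorbing $\iota_{g_1}$ into $\alpha$, we may take $g_1=1$ and $g_2=h$.

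\textbf{Elements of $S$.} Take $i=j=0$. The map $\beta_w$ conjugates $T_1$ by $w(x,y)$, so $\widetilde{\iota}_{w(x,y)^{-1}}\circ\beta_w$ fixes $T_1$. Similarly the second component fixes $T_2$. These pairs lie in $B(G)$ because they have the form $(\widetilde{\gamma}\circ\widetilde{\iota}_{\cdot}\circ\beta,\widetilde{\gamma}\circ\widetilde{\iota}_{\cdot}\circ\beta)$ with a common $\gamma=\id$. Hence $S\le \Stab_{B(G)}(t)$.

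\textbf{Reduction to $i=j=0$ when $\alpha$ is inner.} Conversely, suppose a stabilizing element has $i=j=0$. Then $\alpha\circ\iota_{w(x,y)}$ fixes $x$ and $y$, since $\beta_w(T_1)=\widetilde{\alpha}^{-1}(T_1)$. So $\alpha=\iota_{w(x,y)^{-1}}$. The second component then forces $\iota_h$ to agree with the conjugation by $w'(\cdot)^{-1}$ on the generators of $T_2$, up to the centre, since $\langle T_2\rangle=G$. Hence the element lies in $S$. Because $S$ is a subgroup containing all such elements, it remains to count the cosets coming from $(i,j)\ne(0,0)$.

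\textbf{Cosets with $(i,j)\neq(0,0)$.} Pass to $\overline G=G/\Phi(G)\cong C_p\times C_p$ via \cref{inherited stabilizer} (\cref{number of all B.S.} guarantees that $t$ is inherited). We obtain $(\widetilde{\alpha'}\sigma_i,\widetilde{\alpha'}\sigma_j)\in\Stab(\overline t)$. Choose coordinates $\bar x=(1,0)$, $\bar y=(0,1)$. Then $\alpha'^{-1}$ realizes $\sigma_i$ on $\overline{T}_1$, which determines $\alpha'$ completely from $i$. Checking $\overline{T}_2$ is an explicit linear condition on $(k,\ell,m,n)$.
\begin{itemize}
\item The transpositions $\sigma_3,\sigma_4,\sigma_5$ give determinant $-1$-type maps. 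Combined with the distinct-maximal-subgroup constraints \eqref{restrictions on powers}, the requirement that $\alpha'$ permute $\overline T_2$ by an odd permutation should lead to a contradiction: one gets $k=\ell$-type equalities, and $\sigma_j$ must then also be odd, which reduces again to such equalities.
\item For the 3-cycles, $\alpha'$ sends $\bar x\mapsto \bar z$, $\bar y\mapsto\bar x$, or the reverse. Imposing that $\alpha'$ cycles $\overline T_2$ by $\sigma_2$ (respectively $\sigma_1$) yields exactly the congruences $\ell+m\equiv0$, $n+\ell-k\equiv0$ (respectively $k+n\equiv0$, $k+m-\ell\equiv0$) mod $p$.
\end{itemize}

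\textbf{Lifting to $G$.} To get the congruences modulo $p^e$ and to construct the element, use \cref{automorphism-class 2-case 3}. Any matrix in $\GL(2,p^e)$, together with commutator corrections, defines an automorphism. So I would define $\alpha$ by $\alpha(x)=z^{-1}$-type images that realize $\sigma_2$ on $T_1$ exactly: $\alpha$ maps $(x,y,z)$ to $\sigma_2^{-1}$ of it, which is well defined since both are generating triples satisfying the relations, by class $2$ and equal orders. Then I would check, by working in $G/G'\cong C_{p^e}^2$, that $\alpha$ sends $T_2$ to a conjugate of its cycled version exactly under the congruences modulo $p^e$. The conjugating element $h$ is obtained by solving for the commutator discrepancy, using $G/Z(G)$ and $c,d$ via \cref{class2-case3-center}. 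The index $[\Stab:S]$ is at most $3$ since $(i,j)$ is forced by $i$. The third case is the complement of the two.

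\textbf{Main obstacle.} I expect the hard step to be this lift. It requires showing that the abelianized congruences force an honest automorphism and inner correction matching the commutator parts $c,d$ exactly. It also requires ruling out, modulo $p^e$ rather than $p$, stabilizing elements whose reductions exist only mod $p$.
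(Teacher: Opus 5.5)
Your proposal is correct and follows essentially the paper's route. The $\sigma_0$-part yields $S$, the transposition cases are excluded after passing to $G/\Phi(G)$, and the $3$-cycle cases are handled with the explicit automorphism $x\mapsto y$, $y\mapsto y^{-1}x^{-1}$: the congruences modulo $p^e$ are read off in $G/G'\cong C_{p^e}\times C_{p^e}$, and $h$ is then found from a commutator equation. The differences are minor: you rule out transpositions by an order/fixed-line argument rather than the paper's exponent check, and you find $h$ via the bijection $hZ(G)\mapsto([v,h],[w,h])$ from $G/Z(G)$ to $G'\times G'$ (valid because $v,w$ generate $G$ and $|G/Z(G)|=p^{2j}=|G'|^2$), which is a slightly more direct justification than the paper's elimination of $d$ into a single equation.
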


\begin{proof}
	Let $t=(T_1, T_2)$, and write for simplicity $T_2=(u, v, v^{-1}u^{-1})$. 
	Suppose that
	\[
	(\widetilde{\alpha}\circ\beta_w\circ\sigma_i, \
	\widetilde{\alpha}\circ\widetilde{\iota}_g\circ\beta_{w'}\circ\sigma_j) \in \Stab_{B(G)}(t)
	\]
	for some $\alpha\in \Aut(G)$ and $g\in G$.
	
	Assume first that $\sigma_i=\sigma_0$. 
	Then the equality $\widetilde{\alpha}\circ\beta_w(T_1)=T_1$ implies that $\alpha=\iota_{w(x,y)^{-1}}$.
	Now the condition \[\widetilde{\alpha}\circ\widetilde{\iota}_g\circ\beta_{w'}\circ\sigma_j(T_2)=T_2
	\]
	holds if and only if 
	\[
	\sigma_j=\sigma_0 \ \
	\text{and} \ \
	g \equiv w'(u,v)^{-1}w(x,y) \pmod {Z(G)},
	\]
	and hence
	$
	(\widetilde{\alpha}\circ\beta_w, \
	\widetilde{\alpha}\circ\widetilde{\iota}_g\circ\beta_{w'}) 
	=
	(\widetilde{\iota}_{w(x,y)^{-1}}\circ\beta_w, \
	\widetilde{\iota}_{w'(u,v)^{-1}}\circ\beta_{w'}).
	$
	Thus, we have
	\begin{equation}
		\label{common elements in stablizers}
		S=
		\big\{ 
		\big( \widetilde{\iota}_{w(x,y)^{-1}}\circ\beta_w, \
		\widetilde{\iota}_{w'(u,v)^{-1}}\circ\beta_{w'}  \big)
		\mid
		\beta_w, \beta_{w'} \in J(G)
		\big \} \leq \Stab_{B(G)}(t).
	\end{equation}

	Now suppose that $\sigma_i \in \{ \sigma_3, \sigma_4, \sigma_5\}$. 
	We know that, by the proof of \cref{number of all B.S.}, all Beauville structures of $G$ are inherited by $G/\Phi(G)$. If we use the  bar notation in $G/\Phi(G)$, then
	by \cref{inherited stabilizer}, we have
	\[
	(\widetilde{\alpha'}\circ\sigma_i , \widetilde{\alpha'}\circ\sigma_j ) \in \Stab_{\Au(C_p\times C_p)}(\overline{t}).
	\]
	It then follows that $\alpha'$ fixes one of the elements in $\{ \overline{x}, \overline{y}, (\overline{x}\overline{y})^{-1}\}$.
	Without loss of generality, suppose that $\alpha'(\overline{x})=\overline{x}$ and $\alpha'(\overline{y})=(\overline{x}\overline{y})^{-1}$.
	Then observe that $\sigma_j \in \{\sigma_1, \sigma_2\}$.
	Otherwise $\alpha'$ would fix one element in $\overline{T}_2$, and this would imply that $\alpha'=id_{\overline{G}}$.
	Then we would have $\sigma_i=\sigma_0$, which is not the case.
	
	Now since $\sigma_j \in \{\sigma_1, \sigma_2\}$, then
	$\alpha'(\overline{x}^k\overline{y}^{\ell})= \overline{x}^{k-\ell}\overline{y}^{-\ell}$ is equal to either $\overline{x}^m\overline{y}^{n}$ or $\overline{x}^{-k-m}\overline{y}^{-\ell-m}$.
	However, neither of them are possible, by \eqref{restrictions on powers}.
	Consequently, $\sigma_i, \sigma_j \in \{\sigma_1, \sigma_2 \}$.
	
	Now let $\sigma_i=\sigma_j=\sigma_2$. 
	Then  we have 
		\[
	(\widetilde{\alpha}\circ\beta_w\circ\sigma_2, \
	\widetilde{\alpha}\circ\widetilde{\iota}_g\circ\beta_{w'}\circ\sigma_2) \in \Stab_{B(G)}(t).
	\]
	By taking into account \eqref{common elements in stablizers}, for simplicity we may assume that
	\[(\widetilde{\alpha}\circ\sigma_2, \widetilde{\alpha}\circ\widetilde{\iota}_g\circ\sigma_2) \in \Stab_{B(G)}(t).
	\]
	This implies that $\alpha(x)=y$ and $\alpha(y)=y^{-1}x^{-1}$. 
	Also we have 
	\[
	\alpha(x^ky^{\ell}c)^{\alpha(g)}=x^my^{n}d
	\] 
	and \[\alpha(x^my^{n}d)^{\alpha(g)}=(x^my^{n}d)^{-1}(x^ky^{\ell}c)^{-1}.
	\]
	By the 
	former equality, we have
	\[
	x^{-\ell}y^{k-\ell} [x,y]^{\ell(k-\ell)+ \binom{\ell}{2}}[y^{k-\ell}x^{-\ell},\alpha(g)]c=x^my^{n}d.
	\] 
	Thus  
	\begin{equation}
	\label{exponent relations}
	m\equiv -\ell \pmod{p^e}, \ \ \ 
	\text{and} \ \ \ 
	n +\ell-k \equiv 0 \pmod{p^e},
	\end{equation}
	and 
	\begin{equation}
		\label{c-d relation 1}
		d=[x,y]^{\ell(k-\ell)+ \binom{\ell}{2}}[y^{k-\ell}x^{-\ell},\alpha(g)]c.
	\end{equation}
	By  the equality $\alpha(x^my^{n}d)^{\alpha(g)}=(x^my^{n}d)^{-1}(x^ky^{\ell}c)^{-1}$, together with \cref{exponent relations}, we get
	\[
	y^{-k}x^{\ell-k}[x,y]^{\binom{k-\ell}{2}}
	[y^{-k}x^{\ell-k}, \alpha(g)]d=y^{-k}x^{\ell-k}[x,y]^{-\ell^2}d^{-1}c^{-1}.
	\]
	Thus 
	\begin{equation}
		\label{c-d relation 2}
		d^2[y^{-k}x^{\ell-k}, \alpha(g)]=[x,y]^{-\ell^2-\binom{k-\ell}{2}}c^{-1}.
	\end{equation}
	
Now	by using \eqref{c-d relation 1} and \eqref{c-d relation 2}, we get
	\begin{equation}
		\label{c-d relation 3}
		[y^{k-2\ell}x^{-k-\ell}, \alpha(g)]=[x,y]^{f(k,\ell)}c^{-3},
	\end{equation}
	for some function $f(k,\ell)$ depending on $k$ and $\ell$. 
	Note that since $y^{k-2\ell}x^{-k-\ell} \notin \Phi(G)$ we have
	\[
	G'= \{[y^{k-2\ell}x^{-k-\ell}, w] \mid w\in G  \}.
	\]
	As a consequence, there exists $\alpha(g) \in G$ satisfying \eqref{c-d relation 3}, and it is unique modulo
	\[
	C_G(y^{k-2\ell}x^{-k-\ell})= \langle y^{k-2\ell}x^{-k-\ell} \rangle Z(G).
	\]
	If $\alpha(h)=(y^{k-2\ell}x^{-k-\ell})^r\alpha(g)$ for some $r\in \N$, then by \eqref{c-d relation 1} we have
	\begin{align*}
		d'&= [x,y]^{\ell(k-\ell)+ \binom{\ell}{2}} [y^{k-\ell}x^{-\ell}, \alpha(h)]c \\
		&=d[y^{k-\ell}x^{-\ell}, y^{k-2\ell}x^{-k-\ell}]^r.
	\end{align*}
	If we call $s=y^{k-\ell}x^{-\ell}$ and $q=y^{k-2\ell}x^{-k-\ell}$, then $G=\langle s, q \rangle$, and hence $G'=\langle [s,q]\rangle$. 
	Thus, for any $c \in G'$, there correspond $|G'|$ elements $d' \in G'$, which are the elements in the coset $dG'$, such that 
	$(\widetilde{\alpha}\circ\sigma_2, \widetilde{\alpha}\circ\widetilde{\iota}_{h}\circ\sigma_2) \in \Stab_{B(G)}(t)$. 
	Consequently,  for any Beauville structure $t$ satisfying
	\[
	m\equiv -\ell \pmod{p^e}, \ \ \ 
	\text{and} \ \ \ 
	n +\ell-k \equiv 0 \pmod{p^e},
	\] 
	we have
	$(\widetilde{\alpha}\circ\sigma_2, \widetilde{\alpha}\circ\widetilde{\iota}_{h}\circ\sigma_2) \in \Stab_{B(G)}(t)$.

	On the other hand,  if $\sigma_i=\sigma_2$ and $\sigma_j=\sigma_1$ then we have
	\[
	n \equiv -k \pmod{p^e}, \ \ \ 
	\text{and} \ \ \ 
	k+m-\ell \equiv 0 \pmod{p^e},
	\] 
	Note that these congruences and the ones above do not have a common solution.
	Thus the result of the theorem follows.
\end{proof}

\begin{rmk}
\label{order of stabilizer in B(G)}
If $G$ is a $p$-group given in \cref{stabilizer in B(G)}, then $|\Stab_{B(G)}(t)|=|J(G)|^2$ or $3|J(G)|^2$ for a Beauville structure $t$ for $G$.
\end{rmk}

\vspace{5pt}

 The following lemma counts the number of Beauville structures for $G$ that satisfy the congruence conditions in \cref{stabilizers in B(G)-case distinction}.

\begin{lem}
	\label{number of B.S. according to exponents}
	Let $p\geq 5$ be a prime and let $G$ be a Beauville $p$-group given by the
	following presentation:
	\begin{equation*}
		G=\langle a,b \mid a^{p^e}=b^{p^e}=[b,a]^{p^j}=[b,a,b]=[b,a,a]=1 \rangle,
	\end{equation*}
	where $0<j\leq e$.
	Then the number of Beauville structures
	\[
	t=
	\Big( \big(x,y, y^{-1}x^{-1}\big), \big(x^ky^{\ell}c, x^my^nd,  (x^my^nd)^{-1}(x^ky^{\ell}c)^{-1}\big)\Big)
	\]
	satisfying the congruences
	$m\equiv -\ell \pmod{p^e}$ and $n +\ell-k \equiv 0 \pmod{p^e}$ is
	\begin{equation}
		\label{total number-first part}
		\begin{cases}
			(p+1)p(p-1)^3(p-2)|\Phi(G)|^3|G'|
			\ \ 
			&\text{if}
			\ \ 
			p\equiv -1 \pmod{3},
			 \\ \\
			(p+1)p(p-1)^3(p-4)|\Phi(G)|^3|G'| 
			\ \ 
			&\text{if}
			\ \ 
			p\equiv 1 \pmod{3}.
		\end{cases}
	\end{equation}
\end{lem}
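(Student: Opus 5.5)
The plan is to reduce the count to the quotient $\overline{G}=G/\Phi(G)\cong C_p\times C_p$, and then handle the lifts separately. The congruence conditions only involve the exponents $k,\ell,m,n$. By the proof of \cref{number of all B.S.}, every Beauville structure of $G$ is inherited by $G/\Phi(G)$, and every lift of a Beauville structure of $\overline{G}$ is a Beauville structure of $G$. So $t$ is a Beauville structure exactly when its image in $\overline{G}$ satisfies the conditions \eqref{restrictions on powers} together with $\det\left(\begin{smallmatrix} k&\ell\\ m&n\end{smallmatrix}\right)\not\equiv 0 \pmod p$. I will count in three independent factors: the first triple, the residues of $(k,\ell)$ modulo $p$, and the lifts.

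\textbf{First triple.} As in \cref{the number of B.S-abelian}, the pair $(\overline{x},\overline{y})$ can be chosen in $(p+1)p(p-1)^2$ ways. Each of $\overline{x}$ and $\overline{y}$ has $|\Phi(G)|$ lifts, which gives $(p+1)p(p-1)^2|\Phi(G)|^2$ choices of $(x,y)$.

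\textbf{Second triple.} Every element of $G$ can be written uniquely as $x^ky^{\ell}c$ with $k,\ell$ taken modulo $p^e$ and $c\in G'$, since $|G|=p^{2e}|G'|$. The conditions $m\equiv-\ell$ and $n\equiv k-\ell \pmod{p^e}$ determine $m,n$ from $k,\ell$. The free data are therefore $(k,\ell)$ modulo $p^e$ and $c,d\in G'$. Fixing the residues of $k,\ell$ modulo $p$, there are $p^{e-1}\cdot p^{e-1}$ choices of $(k,\ell)$ and $|G'|^2$ choices of $(c,d)$. Since $|\Phi(G)|=p^{2e-2}|G'|$, this contributes $p^{2e-2}|G'|^2=|\Phi(G)||G'|$. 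Multiplying out, the total is
\[
(p+1)p(p-1)^2\,N_p\,|\Phi(G)|^3|G'|,
\]
where $N_p$ is the number of pairs $(k,\ell)\in\F_p^2$ for which $m=-\ell$, $n=k-\ell$ satisfy the conditions.

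\textbf{Computing $N_p$.} Substituting $m=-\ell$ and $n=k-\ell$, the quantities in \eqref{restrictions on powers} become $k,\ \ell,\ -\ell,\ k-\ell,\ k-\ell,\ k,\ k-\ell,\ -k,\ -\ell$. They are all nonzero if and only if $k\neq0$, $\ell\ne0$ and $k\neq\ell$. The determinant is $kn-\ell m=k^2-k\ell+\ell^2$. Put $u=k/\ell$. Then $\ell$ ranges over $\F_p^*$ and $u$ must avoid $0$, $1$ and the roots of $u^2-u+1$. These roots are the primitive sixth roots of unity. They lie in $\F_p$ (and are distinct from $0,1$) exactly when $p\equiv1\pmod 3$, and there is no such root when $p\equiv-1\pmod3$. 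Hence $N_p=(p-1)(p-2)$ or $(p-1)(p-4)$ respectively, which gives the stated formula.

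The step I expect to need the most care is justifying that \eqref{restrictions on powers} plus the determinant condition is exactly the criterion for a pair of triples in $C_p\times C_p$ to be a Beauville structure. In particular, it must account for the second triple's elements avoiding the maximal subgroups containing $\overline{x}$, $\overline{y}$ and $\overline{xy}$. I will take this from the proof of \cref{the number of B.S-abelian}.
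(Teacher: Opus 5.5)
Your proposal is correct and follows essentially the same route as the paper's proof: an arbitrary first triple with $(p+1)p(p-1)^2|\Phi(G)|^2$ choices, $m,n$ forced by $k,\ell$, free $c,d\in G'$, and a residue count of $(p-1)(p-2)$ pairs minus the zeros of $k^2-k\ell+\ell^2$. Your substitution $u=k/\ell$ also makes explicit something the paper takes from the literature: when $p\equiv 1 \pmod 3$ the two roots of $u^2-u+1$ are distinct and avoid the already excluded values $u=0,1$, so exactly $2(p-1)$ further pairs are removed.
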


\begin{proof}
Recall that by \cref{all B.S.-rmk}, any generating triple $\left(x,y,(xy)^{-1}\right)$ of $G$ appears in some Beauville structures for $G$. 
Thus the first triple $T_1=\big(x,y, y^{-1}x^{-1}\big)$ can be chosen arbitrarily. 
Hence there are $|\Phi(G)|^2 (p+1)p(p-1)^2$ choices for $T_1$.

On the other hand, for the second triple $T_2$, $c$ and $d$ can be chosen freely in $G'$, and  once we have $k$ and $\ell$, the exponents $m$ and $n$ are uniquely determined so that the congruences $m\equiv -\ell \pmod{p^e}$ and $n +\ell-k \equiv 0 \pmod{p^e}$ hold.

Since $x^ky^{\ell}c$ cannot fall in the maximal subgroups occupied by $T_1$, $p\nmid k$, $p\nmid \ell$ and $p \nmid k-\ell$. Thus, for the pair of $(k, \ell)$, we have  $p^{2e-2}(p-1)(p-2)$ possibilities. 
However, by \cref{restrictions on powers}, we have to remove the pairs of $(k,\ell)$ where 
\[\det \left(\begin{array}{cc} k & \ell\\ -\ell & k-\ell \end{array}\right)=k^2+\ell^2-k\ell \equiv 0 \pmod{p}.
\]
Note that if $p \equiv -1 \pmod{3}$ then the equation $k^2+\ell^2-k\ell  \equiv 0 \pmod{p}$ has no solution.
If $p\equiv 1 \pmod{3}$ then for any fixed $k$, there are $2$ distinct choices for $\ell$ in $\pmod{p}$ satisfying that equation (see page 200 in \cite{GJT}).
Hence there are $p^{2e-2}(p-1)(p-2)|G'|^2$ possibilities for the triple $T_2$ if $p \equiv -1 \pmod{p}$, and there are $p^{2e-2}(p-1)(p-4)|G'|^2$ possibilities otherwise.
Then the result follows from the fact that $|G'|=p^j$ and $|\Phi(G)|=p^{2e+j-2}$.
\end{proof}

\vspace{10pt}

\begin{proof}[Proof of Theorem A]
We will use Cauchy-Frobenius Lemma.
Let
\[
t=
\left( \big(x,y, y^{-1}x^{-1}\big), \big(x^ky^{\ell}c, x^my^nd,  (x^my^nc)^{-1}(x^ky^{\ell}d)^{-1}\big)\right)
\]
be a Beauville structure for $G$ where $1\leq k, \ell, m, n \leq p^e$ and $c, d \in G'$.

In \cref{stabilizer in B(G)}, we already determine $\Stab_{B(G)}(t)$.
Our next goal is to determine $\Stab_{\Au(G)}(t)$.

Observe first that if $t$ satisfies the first congruence conditions of \eqref{stabilizers in B(G)-case distinction}, then
$t'=(\sigma_0, \sigma_5)(t)$ is a Beauville structure satisfying the second congruence conditions of \eqref{stabilizers in B(G)-case distinction}.
This means that $t$ and $t'$ are in the same orbit, and hence the cardinality of their stabilizers are equal.
Thus, in determination of $\Stab_{\Au(G)}(t)$ we assume that we are  in either the first or third case of \eqref{stabilizers in B(G)-case distinction}.

By taking into account that $S\subseteq \Stab_{B(G)}(t)$, for simplicity we assume that $\big(\widetilde{\theta}\circ\widetilde{\iota}_g\circ\sigma_i, \widetilde{\theta}\circ\sigma_j\big)\circ \tau \in \Stab_{\Au(G)}(t)$. Then we have
\[
\left( \big(\widetilde{\theta}\circ\widetilde{\iota}_g\circ\sigma_i, \widetilde{\theta}\circ\sigma_j\big)\circ \tau\right)^2
=
\big( \widetilde{\theta} \circ \widetilde{\iota}_g \circ \widetilde{\theta}\circ \sigma_i\circ \sigma_j,  \widetilde{\theta}^2\circ \widetilde{\iota}_g \circ\sigma_j\circ \sigma_i \big) \in \Stab_{B(G)}(t).
\]

This, together with \cref{stabilizer in B(G)}, implies the following modulo $J(G)\times J(G)$:
\[
(\sigma_i\circ\sigma_j, \sigma_j\circ\sigma_i) \in \{ (\sigma_0, \sigma_0), (\sigma_1, \sigma_1), (\sigma_2, \sigma_2) \}.
 \]

One can observe that it is enough to analyze only the case $(\sigma_i\circ\sigma_j, \sigma_j\circ\sigma_i)=(\sigma_0, \sigma_0)$.
This means that modulo $J(G)$, we have $\sigma_j= \sigma_i^{-1}$.
So we have 6 possibilities:
\begin{equation}
	\label{six possibilities}
	(\sigma_i, \sigma_j) \in \{ (\sigma_0, \sigma_0), (\sigma_1, \sigma_2), (\sigma_2, \sigma_1), (\sigma_3, \sigma_3), (\sigma_4, \sigma_4), (\sigma_5, \sigma_5) \}.
\end{equation}

Let us start with $(\sigma_0, \sigma_0)$, that is, $(\widetilde{\theta}\circ\widetilde{\iota}_g, \widetilde{\theta})\circ\tau \in \Stab_{\Au(G)}(t)$.
Now we have
\[
\theta(x)=x^ky^{\ell}c \ \ \ \text{and} \ \ \ \theta(y)=x^my^nd.
\]
Then modulo $G'$ we have $\theta(x^ky^{\ell})=x$ and $\theta(x^my^n)=y$.
It then follows that
\begin{equation}
	\label{general case-exponents}
	n \equiv -k \pmod{p^e} \ \ \
	\text{and} \ \ \
	k^2+\ell m \equiv 1 \pmod{p^e},
\end{equation}

Note that $\theta([x,y])=[x,y]^{-1}$. 
We want $\theta(x^ky^{\ell}c)^{\theta(g)}=x$ and  $\theta(x^my^{-k}d)^{\theta(g)}=y$. 
The former equality implies that
\[
x^{\theta(g)} [x,y]^{-\ell^2km-k\ell\binom{k}{2}+km\binom{\ell}{2}}c^{k-1}d^{\ell}=x.
\]
Hence we have
\begin{equation}
	\label{relation1}
	[x,y]^{-{\ell}^2km-k\ell\binom{k}{2}+km\binom{\ell}{2}}[x, \theta(g)]c^{k-1}=d^{-\ell}.
\end{equation}
By the equality $\theta(x^my^{-k}d)^{\theta(g)}=y$, we get
\[
y^{\theta(g)} [x,y]^{m^2k\ell-k\ell\binom{m}{2}+km\binom{-k}{2}}c^md^{-k-1}=y.
\]
Thus,
\begin{equation}
	\label{relation2}
	[x,y]^{m^2k\ell-k\ell\binom{m}{2}+km\binom{-k}{2}}[y, \theta(g)]c^m=d^{k+1}.
\end{equation}
Since $(k+1)$st power of \eqref{relation1} is equal to $-\ell$th power of \eqref{relation2}, we get
\begin{equation}
	\label{relation3}
	[x^{k+1}y^{\ell}, \theta(g)]=[x,y]^{f(k,\ell,m)}.
\end{equation} 
As before,  we conclude that $G'= \{[x^{k+1}y^{\ell},g] \mid g\in G\}$.
Thus, there is $g \in G$ satisfying \eqref{relation3}, and it is unique modulo
\[
C_G(x^{k+1}y^{\ell})= \langle x^{k+1}y^{\ell} \rangle Z(G).
\]
As a consequence,  for any $c \in G'$, there correspond $|G'|$ elements $d' \in G'$, which are the elements in the coset $dG'$, such that 
$(\widetilde{\theta}\circ\widetilde{\iota}_g, \widetilde{\theta})\circ\tau \in \Stab_{\Au(G)}(t)$.

In order to determine the number of $t$ whose stabilizer contains $(\widetilde{\theta}\circ\widetilde{\iota}_g, \widetilde{\theta})\circ\tau$, we also need to analyze \eqref{general case-exponents}.
Observe that once we have $k$ and $\ell$, then $m$ and $n$ are unique.
Since all elements in $t$ fall in distinct maximal subgroups, 
this together with congruence $\ell m \equiv 1-k^2 \pmod{p^e}$ imply that  $k\not\equiv 0, 1, -1 \pmod{p}$.
Hence there are $p^{e-1}(p-3)$ possibilities  for $k$.
Once $k$ is chosen there are $p^{e-1}(p-5)$possibilities for $\ell$ since 
$\ell\not \equiv 0, k, \frac{k^2-1}{k}, k-1, k+1 \pmod{p}$ (see page 201 in \cite{GJT}).
Thus, the number of pairs of $(k ,\ell)$ is $p^{2e-2}(p-3)(p-5)$.
Hence, once the first triple in $t$ has been fixed, there are
\begin{align*}
	&p^{2e-2}(p-3)(p-5)|G'|^2\\
	=& (p-3)(p-5)|\Phi(G)|G'|
\end{align*}
choices for the second triple in $t$. Consequently, the total number of Beauville structures $t$ for $G$ such that $(\widetilde{\theta}\circ\widetilde{\iota}_g, \widetilde{\theta})\circ\tau \in \Stab_{\Au(G)}(t)$ is
 \begin{equation}
 	\label{six possibilities-total number}
 (p+1)p(p-1)^2(p-3)(p-5)|\Phi(G)|^3|G'|.
 \end{equation}
 Recall that the same arguments apply for any pair of $(\sigma_i, \sigma_j)$ in \eqref{six possibilities}.
 
 In order to determine the order of all stabilizers, it remains to find the number of Beauville structures in \eqref{six possibilities-total number} which belongs to the first case in  \eqref{stabilizers in B(G)-case distinction}.
 
 Assume that we are in the first case in \eqref{stabilizers in B(G)-case distinction}, that is, $\ell+m\equiv 0\pmod{p^e}$ and $n+\ell-k \equiv 0 \pmod{p^e}$.
 We start with $(\sigma_0, \sigma_0)$. 
 Then by above calculations we also know that \eqref{general case-exponents} holds.
 Thus, we have
 \begin{equation}
 	\label{distinguished-1}
 	\ell\equiv 2k \pmod{p^e} \ \
 	\text{and}
 	\ \
 	3k^2+1\equiv 0\pmod{p^e}.
 \end{equation}

If $(\sigma_i, \sigma_j)=(\sigma_2, \sigma_1)$, then one can check that
\begin{equation}
	\label{distinguished-2}
	k\equiv 2\ell \pmod{p^e} \ \
	\text{and}
	\ \
	3\ell^2+1\equiv 0\pmod{p^e},
\end{equation}
and if $(\sigma_i, \sigma_j)=(\sigma_1, \sigma_2)$, then 
\begin{equation}
	\label{distinguished-3}
	\ell\equiv -k \pmod{p^e} \ \
	\text{and}
	\ \
	3k^2+1\equiv 0\pmod{p^e}.
\end{equation}

Note that the congruence $3X^2+1 \equiv 0 \pmod {p^e}$ have $2$ distinct solutions if $p\equiv 1 \pmod{3}$, otherwise it has no solution (see page 201 in \cite{GJT}).
This means that if $p\equiv 1 \pmod{3}$, then there are
\begin{equation}
	\label{distinguished-total number}
	2|\Phi(G)|^2(p+1)p(p-1)^2|G'||G'|
\end{equation}
Beauville structures in each of cases \eqref{distinguished-1}-\eqref{distinguished-3}.

We continue with $(\sigma_4, \sigma_4)$.
Since we are in the first case of \eqref{stabilizers in B(G)-case distinction}, if
$(\widetilde{\theta}\circ \widetilde{\iota}_g\circ\sigma_4 , \widetilde{\theta}\circ\sigma_4)\circ\tau \in \Stab_{\Au(G)}(t)$, then both
$(\widetilde{\theta_1}\circ \widetilde{\iota}_{g_1}\circ\sigma_3 , \widetilde{\theta_1}\circ\sigma_3)\circ\tau$ and
$(\widetilde{\theta_2}\circ \widetilde{\iota}_{g_2}\circ\sigma_5 , \widetilde{\theta_2}\circ\sigma_5)\circ\tau$ are in $\Stab_{\Au(G)}(t)$ for some $g_1, g_2\in G$ and $\theta_1, \theta_2\in \Aut(G)$.

The condition $(\widetilde{\theta}\circ \widetilde{\iota}_g\circ\sigma_4 , \widetilde{\theta}\circ\sigma_4)\circ\tau \in \Stab_{\Au(G)}(t)$ implies that
\begin{equation}
	\label{distinguished-4}
	k^2+\ell^2-k\ell\equiv 1 \pmod{p^e}.
\end{equation}
Let us denote by $K$ the number of pair of $(k ,\ell)$ satisfying \eqref{distinguished-4} so that $t=(T_1, T_2)$ form a Beauville structure for $G$. 
Hence the number of $t$ in the first case of \eqref{stabilizers in B(G)-case distinction} such that $(\widetilde{\theta}\circ \widetilde{\iota}_g\circ\sigma_4 , \widetilde{\theta}\circ\sigma_4)\circ\tau \in \Stab_{\Au(G)}(t)$
is
\begin{equation}
	\label{distinguished-total number-2}
	K|\Phi(G)|^2(p+1)p(p-1)^2|G'||G'|.
\end{equation}

Notice that a pair $(k, \ell)$ that satisfies any one of \eqref{distinguished-1}--\eqref{distinguished-3} or \eqref{distinguished-4} does not satisfy the others.
This implies that any Beauville structure satisfying one of  \eqref{distinguished-1}--\eqref{distinguished-3} or \eqref{distinguished-4} has stabilizer of order $6|J(G)|^2$.

Putting everything together, we can finally calculate the number of orbits in $\U(G)$ under the action of $\Au(G)$ by using Cauchy-Frobenius Lemma.

We first assume that $p\equiv 1 \pmod{3}$.
Let us denote by $A$ the number in \eqref{total number-first part} when $p\equiv 1 \pmod{3}$.
Then total number of Beauville structures in the first two cases of \eqref{stabilizers in B(G)-case distinction} is $2A$.
Let us denote by $B$ the number in \eqref{distinguished-total number-2}, by $C$ the number in \eqref{six possibilities-total number} and by $D$ the number in \eqref{distinguished-total number}.
Then out of $2A$ Beauville structures in the first two cases of \eqref{stabilizers in B(G)-case distinction}, $2B+6D$ of them have stabilizers of order $6|J(G)|^2$ and hence $2A-2B-6D$ of them have stabilizers of order $3|J(G)|^2$.

On the other hand, the total number of Beauville structures in the last case of \eqref{stabilizers in B(G)-case distinction} is $|\U(G)|-2A$. Out of these Beauville structures, $6C-6B-6D$ of them have stabilizers of order $2|J(G)|^2$, and the remaining $|\U(G)|-2A-6C+6B+6D$ of them have stabilizers of order $|J(G)|$.

Thus,
\[
\sum_{t \in \U(G)}|\Stab_{\Au(G)}(t)|=\left(|\U(G)|+4A+6C+12D\right)|J(G)|^2.
\]

Now recall that by \cref{order of the group}, $|\Au(G)|=72 |J(G)|^2 .|\Inn(G)|. |\Aut(G)|$. Then by Cauchy-Frobenius Lemma, the total number of distinct orbits is
\begin{equation}
	\label{orbits-split extension}
	\begin{aligned}[b]
		&\frac{1}{|\Au(G)|}\sum_{t \in \U(G)}|\Stab_{\Au(G)}(t)|=\\
		&\begin{aligned}[b]
			\frac{1}{72|\Inn(G)||\Aut(G)|}
			\left( 
			|\U(G)|
			+4A+6C+12D
			\right),
		\end{aligned} \\
	\end{aligned}
\end{equation}
where
\[
|\U(G)|=|\Phi(G)|^4(p+1)p(p-1)^3(p-2)(p-3)(p-4),
\] 
$|\Inn(G)|=|G'|^2$ \ and  \ $|\Aut(G)|=(p+1)p(p-1)^2 |\Phi(G)|^2$,
by \cref{number of all B.S.}, \cref{class2-case3-center} and \cref{automorphism-class 2-case 3}, respectively.

Consequently, as $|G'|=p^j$ and $|\Phi(G)|=p^{2e+j-2}$, we have that if $p\equiv 1 \pmod{3}$, \cref{orbits-split extension} is equal to
\[
\frac{1}{72}
\bigg( p^{4e-4}
(p-1)(p-2)(p-3)(p-4)
+ \
p^{2e-2}
\Big(4(p-1)(p-4)+6(p-3)(p-5)\Big)
+
24
\bigg).
\]

By applying similar calculations, it is easy to check that, if $p\equiv -1 \pmod{3}$, this number is equal to
\[
\frac{1}{72}\bigg(p^{4e-4}
(p-1)(p-2)(p-3)(p-4)
+ \
p^{2e-2}  
\Big(4(p-1)(p-2)+6(p-3)(p-5)\Big)  
\bigg).
\]
\end{proof}

\begin{rmk}
	\label{rmk for abelian}
One can check that \cref{stabilizer in B(G)}, \cref{number of B.S. according to exponents} and Theorem A also hold when $j=0$ in the presentation of $G$, that is, when $G\cong C_{p^e}\times C_{p^e}$.
As a consequence, for $G\cong C_{p^e}\times C_{p^e}$ we get the same result as in Corollary 4 in \cite{GJT}.
\end{rmk}

We next continue with non-abelian metacyclic Beauville $p$-groups.

\vspace{10pt}

\begin{proof}[Proof of Theorem C]
Let $t=(T_1, T_2)$ be a Beauville structure for $G$. Suppose that
\[
(\widetilde{\alpha}\circ\beta_w \circ\sigma_i, \
\widetilde{\alpha}\circ\widetilde{\iota}_g\circ\beta_{w'}\circ\sigma_j ) \in \Stab_{\Au(G)}(t)
\]
for some for some $\alpha \in \Aut(G)$, $g\in G$.

Recall that by the proof of \cref{number of all B.S.}, every Beauville structure for $G$ is inherited by $G/\Phi(G)\cong C_p\times C_p$.
Then by \cref{stabilizer in B(G)} and \cref{rmk for abelian}, we have that  $\sigma_i=\sigma_0, \sigma_1$ or $\sigma_2$.

Assume that $\sigma_i=\sigma_2$. 
Let $T_1=(x, y, y^{-1}x^{-1})$.
Then we have \[
\alpha \left(x^{w(y^{-1}x^{-1}, x)}\right)=y\]
and 
\[\alpha\left(y^{w(y^{-1}x^{-1}, x)}\right)= y^{-1}x^{-1}.\] 
If we write $x= a^ib^jg^p$ and $y=a^kb{\ell}h^p$ for some $0\leq i, j,  k, \ell \leq p-1$, then
\[
\alpha(x)\equiv a^kb^{\ell} \pmod{\Phi(G)} \ \
\text{and} \ \
\alpha(y)\equiv a^{-i-k}b^{-j-\ell} \pmod{\Phi(G)}. 
\]
On the other hand, by \cref{automorphism-powerful}, we have
\[
\alpha(x)\equiv a^{ni+sj}b^j \pmod{\Phi(G)} \ \
\text{and} \ \
\alpha(y)\equiv a^{nk+s\ell}b^{\ell} \pmod{\Phi(G)}.
\]
Since $p\geq 5$, it follows that $p \mid \ell$ and $p \mid j$, and hence
$x, y \in \langle a, \Phi(G)\rangle$., which is not possible.
The same argument applies if $\sigma_i=\sigma_1$. 
As a consequence,  we have $\sigma_i=\sigma_j=\sigma_0$.
 
Now let  $T_2=(u, v, v^{-1}u^{-1})$. 
Since $\sigma_i=\sigma_j=\sigma_0$, by applying similar arguments in the proof of \cref{stabilizer in B(G)}, we have
\[
\Stab_{B(G)}(t)
=
\big\{ 
\big(  \widetilde{\iota}_{w(x,y)^{-1}}\circ \beta_w, \ \widetilde{\iota}_{w'(u,v)^{-1}}\circ \beta_{w'}  \big)
\mid
\beta_w, \beta_{w'} \in J(G)
\big \},
\]
for any Beauville structure $t$,
and as a consequence $|\Stab_{B(G)}(t)|=|J(G)|^2$.

We next deal with $\Stab_{\Au(G)}(t)$. We have two possibilities: either we have $\Stab_{\Au(G)}(t)=\Stab_{B(G)}(t)$ and hence $|\Stab_{\Au(G)}(t)|= |J(G)|^2$, or we have
$|\Stab_{\Au(G)}(t)|= 2|J(G)|^2$. 

Let us determine the number of $t \in \U(G)$ such that $|\Stab_{\Au(G)}(t)|=2|J(G)|^2$.
Assume that $(\widetilde{\alpha}\circ \mu, \widetilde{\alpha}\circ\epsilon)\circ \tau \in \Stab_{\Au(G)}(t)$ for some $\mu, \epsilon \in I(G)$.
Then
\[
\big((\widetilde{\alpha}\circ \mu, \widetilde{\alpha}\circ\epsilon)\circ \tau\big)^2=
(\widetilde{\alpha}\circ\mu \circ \widetilde{\alpha} \circ \epsilon, \widetilde{\alpha}\circ\epsilon \circ\widetilde{\alpha} \circ \mu) \in \Stab_{B(G)}(t).
\]
This implies that $\widetilde{\alpha}\circ\epsilon\circ \widetilde{\alpha}  \in I(G)$ which is true if and only if $\widetilde{\alpha}^2 \in I(G)$.
Then by \cref{intersections-metacyclic}, we have $\alpha^2 \in \Inn(G)$.
On the other hand,  since elements in $T_1 \cup T_2$ fall into distinct maximal subgroups and $\epsilon$ `preserves' the maximal branches of $G$, the condition $(\widetilde{\alpha}\circ \mu, \widetilde{\alpha}\circ\epsilon)\circ \tau \in \Stab_{\Au(G)}(t)$ implies that $\alpha \notin \Inn(G)$.
Consequently, $\alpha$ is an involution in $\Out(G)$.

Let us fix an involution $\alpha$ in $\Out(G)$. Once we fix $T_1$, for any $\epsilon \in I(G)$ we take 
$\widetilde{\alpha} \circ \epsilon(T_1)=T_2$.
Now the question is: for such a fixed $\alpha \in \Aut(G)$ and $T_1$, how many possibilities do we have for $T_2$? 
This is equivalent to determine the number of distinct $(T_1)\epsilon$ for $\epsilon \in I(G)$. 
Since every $\epsilon \in I(G)$ can be written of the form $\widetilde{\iota}_h \circ \beta_w \circ \sigma_i$ and for $T_1$ the action of $\widetilde{\iota}_h \circ \beta_w$ is a conjugation, we have $6|\Inn(G)|$ possibilities for $T_2$.

We next determine the number of such $T_1=(x, y, y^{-1}x^{-1})$. 
Note that any $\epsilon \in I(G)$ `preserves' the maximal branches of $T_1$.
On the other hand, by \cref{powerful-elements of Out(G)}, we know that $\alpha$ is of the form
\begin{align*} 
	\alpha \colon a & \longmapsto a^{-1}w_a \\
	b & \longmapsto a^{\lambda}bw_b,
\end{align*}
for some $w_a, w_b\in \Phi(G)$ and $0\leq \lambda\leq p-1$. 
Observe that there are two maximal subgroups which are invariant under $\alpha$: 
$\langle a, \Phi(G) \rangle$ and $\langle ab^{2\lambda^{-1}}, \Phi(G) \rangle$ if $\lambda\neq 0$, or $\langle a, \Phi(G) \rangle$ and $\langle b, \Phi(G) \rangle$ if $\lambda=0$.
Since all elements in $T_1\cup T_2$ lie in different maximal subgroups of $G$ and we want $\widetilde{\alpha}\circ \epsilon(T_1)=T_2$, the elements of $T_1$ cannot be in the maximal subgroups which are invariant under $\alpha$.
Thus, we have $(p-1)(p-1)|\Phi(G)|$ possibilities for $x$. 

Once we have chosen $x$, the maximal subgroup containing $y$ cannot contain $x$ and $\alpha(x)$.
Thus we can choose that maximal subgroup containing $y$ in $(p-3)$ different ways. 
Next we have to replace $y$ with $y^j$ for some $1\leq j \leq p-1$ so that the element $xy^j$ is neither in the invariant maximal subgroups under $\alpha$ nor in the maximal subgroups containing $\alpha(x)$ and $\alpha(y^j)$. 
Hence $y$ can be chosen $(p-3)(p-5)|\Phi(G)|$ different ways, and thus for the triple $T_1$ we have
\[
(p-1)^2(p-3)(p-5)|\Phi(G)|^2
\]
different possibilities.

Consequently, for a fixed such $\alpha \in \Aut(G)$ the number of Beauville structures of $G$ which have a stabilizer of order $2|J(G)|^2$ is
\[
6(p-1)^2(p-3)(p-5)|\Phi(G)|^2|\Inn(G)|.
\]
Since by \cref{powerful-elements of Out(G)}, the number of involutions in $\Out(G)$ is $p^{2i}$, 
the number of Beauville structures for $G$ which have a stabilizer of order $2|J(G)|^2$ is
\[
6p^{2i}(p-1)^2(p-3)(p-5)|\Phi(G)|^2|\Inn(G)|
=
6 |\Phi(G)|^2 |G|(p-1)^2(p-3)(p-5).
\]

We are now ready to complete the proof. 
Now recall that by \cref{order of the group}, $|\Au(G)|=72 |J(G)|^2 .|\Inn(G)|. |\Aut(G)|$, where $|\Aut(G)|=(p-1)p^{2e+2i-1}$ by \cref{automorphism-powerful}.
Then by Cauchy-Frobenius Lemma, the total number of distinct orbits is
\begin{align*}
	&\frac{1}{|\Au(G)|}\sum_{t \in \U(G)}|\Stab_{\Au(G)}(t)|=\\
	&\frac{1}{72|\Inn(G)|. |\Aut(G)|}
	\Big( |\U(G)|+ 6 |\Phi(G)|^2 |G|(p-1)^2(p-3)(p-5)\Big)\
\end{align*}
where  \[
|\U(G)|=|\Phi(G)|^4(p+1)p(p-1)^3(p-2)(p-3)(p-4),
\]
 by \cref{number of all B.S.}.
 Thus, the number of isomorphism classes of Beauville surfaces with $G$ is
 \[
 \frac{1}{72}
 \Big(
 p^{4e-6} (p+1)(p-1)^2(p-2)(p-3)(p-4)
 +
 6p^{2e-3}(p-1)(p-3)(p-5)
 \Big).
 \]	
\end{proof}

We close the paper by giving the proof of Theorem B.

\vspace{10pt}

\begin{proof}[Proof of Theorem B]
	We are going to apply the same arguments in the proof of Theorem C.
	Then by taking into account \cref{class2-case2-elements of Out(G)}, the number of Beauville structures for $G$ which have stabilizers of order $2|J(G)|^2$ is
	\[
	6p^{2i-j}(p-1)^2(p-3)(p-5)|\Phi(G)|^2|\Inn(G)|.
	\]
	Now by \cref{class2-case2-inner}, this number is equal to
	\[
	6p^{2e+4i+3j-4}(p-1)^2(p-3)(p-5).
	\]
	The rest of the Beauville structures have stabilizers of order $|J(G)|^2$.
	Finally if we apply the Cauchy-Frobenius Lemma by using \cref{number of all B.S.} and \cref{order of the group}, we obtain that the total number of distinct orbits is
\[
	\frac{1}{72}
		\left(
		p^{4e-6}(p+1)(p-1)^2(p-2)(p-3)(p-4)
		+
		6p^{2e-j-3}(p-1)(p-3)(p-5)
		\right).
\]
\end{proof}

\section{Acknowledgments}
I would like to thank G. A. Fern\'{a}ndez-Alcober for his valuable feedback and
helpful suggestions. I also thank the University of the Basque Country
for its hospitality while this research was conducted.

\end{document}